\documentclass[11pt,a4paper]{amsart}
\calclayout
\numberwithin{equation}{section}
\usepackage{extarrows}
\allowdisplaybreaks
\theoremstyle{plain}

\newtheorem{theorem}{Theorem}[section]
\newtheorem{lemma}[theorem]{Lemma}
\newtheorem{prop}[theorem]{Proposition}

\newtheorem{corollary}[theorem]{Corollary}

\numberwithin{equation}{section}
\usepackage{enumerate}

\usepackage{amssymb}
\usepackage{mathtools}
\mathtoolsset{showonlyrefs}
\usepackage{mathrsfs}
\usepackage{comment}
\usepackage{hyperref}
\usepackage{xcolor}
\usepackage{footnote}

\begin{document}
	\title[	]{Asymptotic behaviours of critical branching random walk in $\mathbb{R}^d$}

	\thanks{The research of Haojie Hou is supported by the China Postdoctoral Science Foundation (No. 2024M764112).}
	\thanks{The research of Yaping Zhu is supported  by National Natural Science Foundation of China, Tianyuan Mathematics Young Researcher Project (Grant No. 12526540) and the Fundamental Research Funds for the Central Universities. }
	\thanks{*Yaping Zhu is the corresponding author}
	\author[H. Hou]{Haojie Hou}
	\author[Y. Zhu]{Yaping Zhu$^\ast$}
	\address{Haojie Hou\\ School of Mathematics and Statistics \\ Beijing Institute of Technology \\ Beijing 100081\\ P. R. China}
	\email{houhaojie@bit.edu.cn}
	
	\address{Yaping Zhu\\ 	Department of Mathematics\\ Shanghai University of Finance and Economics\\ Shanghai 200433\\ P. R. China}
	\email{zhuyaping@mail.sufe.edu.cn}

	\begin{abstract}
		In this paper, we study the asymptotic behaviours of a critical branching random walk in $\mathbb{R}^d$ under the assumption that the offspring distribution belongs to the domain of attraction of an $\alpha$-stable law with $\alpha\in(1,2]$, and that the jump distribution has a finite $\frac{2\alpha}{\alpha-1}$-th moment.  
		First, we establish the precise decay rate for the tail probability of the all-time maximal displacement $M^d$. Next, we investigate the maximal displacement $M_n^d$ at generation $n$ and prove a conditional limit theorem for the distribution of $M_n^d$ given that the process survives up to generation $n$. These results extend the corresponding 1-dimensional results of Lalley and Shao \cite{Lalley15} to the case $d\ge2$. 
		Finally, we study the asymptotic behaviour of the total progeny $\zeta$. In particular, we show that, conditioned on the event $\{M^d\ge x\}$,  $\zeta$ converges in distribution under an appropriate normalization. This result reveals a quantitative relationship between the maximal displacement and the total progeny size.
	\end{abstract}
	\subjclass[2020]{Primary: 60J80; Secondary: 60J65}
	\keywords{Critical branching random walk, high-dimensional, maximal displacement, total progeny, Yaglom limit}
	\maketitle

\section{Introduction}
\subsection{Background introduction}
A discrete-time branching random walk (BRW) in $\mathbb{R}^d$ is defined as follows. Initially, there is a single particle located at the origin. Each particle lives for one unit of time, after which it dies and produces a random number of offspring. If a particle is located at $\mathbf{x}\in\mathbb{R}^d$, then the positions of its offspring in the next generation are formed by the point process
$\mathbf{x}+N\delta_{\mathbf X}$. We assume that all particles evolve independently and according to the same law. The jump distribution 
$\mathbf{X}=(X_1,X_2,\ldots,X_d)^T$ 
is assumed to be a $d$-dimensional random vector
satisfying
\[
\mathbb{E} \mathbf{X}= \mathbf{0}\quad \mbox{and} \quad \eta^2:= \mathbb{E}(\|\mathbf{X}\|^2)\in (0,\infty)
.
\]
As for the offspring distribution, we assume that $N$ is independent of $\mathbf{X}$,
equal in law to
$\{p_k,\; k\in\mathbb{N}_0\}$.
The BRW is called supercritical, critical or subcritical if $\mathbb{E}(N)$ is larger than, equal to or smaller than $1$. 
In this paper, we focus on the critical case, in which the BRW dies out in finite time almost surely.

It is well known that a BRW possesses an underlying random tree structure. Let $\mathbb{T}$ denote the genealogical tree of the branching process rooted by $\emptyset$ with offspring distribution $\{p_k\}_{k\ge 0}$. Let $\mathbf{X}_{\emptyset}=\mathbf{0}$ and $\{\mathbf{X}_v,v\in \mathbb{T} \backslash \emptyset\}$ be i.i.d. copies of $\mathbf{X}$, where $\mathbf{X}_v$ stands for the displacement of particle $v$. Let $V(w):=\sum_{v\prec w}\mathbf{X}_v$ be the position of particle $w\in \mathbb{T}$, where $v\prec w$ means that $v=w$ or $v$ is an ancestor of $w$. For each particle $w\in \mathbb{T}$, we use $|w|$ to denote the generation of $w$.  The population and their locations at generation $n$ forms a point process given by 
\begin{equation}
	Z_n:= \sum_{|w|=n}\delta_{V(w)}.
\end{equation}
For any particle $w$, let $\|V(w)\|$ be the Euclid Norm of $V(w)$. 
For $d=1$, define 
\begin{equation}
	M_n^{1}:=\max_{|w|=n}V(w) \quad \mbox{and}\quad M^{1}:=\sup_{n\ge 0} M_n^{1}.
\end{equation}
For $d\geq 2$, define 
\begin{equation}\label{all-time-maximum}
	M_n^{d}:=\max_{|w|=n}\|V(w)\| \quad \mbox{and}\quad M^{d}:=\sup_{n\ge 0} M_n^{d}.
\end{equation}
Throughout this paper, for a finite point measure $\mu$, we write $\mathbb{P}_{\mu}$ for the probability law of the BRW with initial value $Z_0=\mu$, and write $\mathbb{E}_{\mu}$ for the associated expectation. For simplicity, we write 
$\mathbb{P}:=\mathbb{P}_{\delta_\mathbf 0}$ and $\mathbb{E}:=\mathbb{E}_{\delta_\mathbf 0}$ . 

At present, studies on the long-time behaviours of critical BRW have been mainly focused on the case $d=1$; see for example, Lalley and Shao \cite{Lalley15}. It was proved therein that if $\sum_{k=0}^\infty k^3 p_k<\infty$ and 
the jump distribution $X$ satisfying 
$\mathbb{E}(|X|^{4+\varepsilon})<\infty$ for some $\varepsilon>0$, then
\begin{equation}\label{tail-one-dimensional}
	\lim_{x\to\infty}x^2 \mathbb{P}\big(M^1 \ge x\big) =\frac{6\eta^2}{\sigma^2},
\end{equation}
where $\sigma^2$ is the variance of the offspring distribution. Moreover, it was also proved in Lalley and Shao \cite{Lalley15} that the conditioned law of $M_n^1/\sqrt{n}$ under survival event $Z_n(\mathbb{R})>0$ converges weakly. Recently, Leh\'ericy \cite{Lehericy} and Zhang \cite{ZX25} weakened the condition of \cite{Lalley15}, showing that the condition on the jump distribution can be weakened to $\mathbb{P}(X>x)= o(x^{-4})$, while the offspring distribution only requires  a finite second moment.  It is also worth mentioning that Leh\'ericy \cite[Theorem 2.5]{Lehericy}, studied the convergence in distribution of the total progeny of the 1-dimensional BRW, conditioned on the right-most particle attaining a large position.

Motivated by the above results, we further investigate the long-time asymptotic behaviours of critical BRW for $d\geq 2$. In particular, we study the decay rate of $\mathbb{P}(M^d\geq x)$ as $x\to\infty$, the convergence of $M_n^d$ under a suitable normalization, and the asymptotic behavior of the total progeny $\zeta$. We work under a more general framework.  In what follows, we always assume that 

\begin{itemize}
	\item [{\bf(H1)}]  The offspring distribution $\{p_k\}_{k\geq 0}$ satisfies $\sum_{k=1}^{\infty}kp_k=1, p_1<1$ and belongs to the domain of attraction of an $\alpha$-stable distribution with $\alpha\in (1,2]$.
	More precisely, either there exist $\alpha\in (1,2)$ and $\kappa(\alpha)\in(0,\infty)$ such that
	\[
	\lim_{n\to\infty} n^\alpha \sum_{k=n}^\infty p_k = \kappa(\alpha),
	\]
	or that (corresponding $\alpha=2$) $\sum_{k=0}^\infty k^2 p_k<\infty.$

	\item[{\bf(H2)}] 
	The jump distribution $\mathbf{X}$ is independent of $N$ and satisfies $\mathbb{E}(\mathbf{X})=\mathbf{0}, \eta^2= \mathbb{E}(\|\mathbf{X}\|^2)\in (0,\infty)$ and 
	$\mathbb{E}\big(\|\mathbf{X}\|^{\frac{2\alpha}{\alpha-1}}\big)<\infty$.
\end{itemize}

In $d=1$, under  {\bf(H1)} and  a slightly weaker moment condition than {\bf(H2)}, Zhang \cite{ZX25} proved that 
\begin{equation}\label{tail-one-dimensional-alpha-stable}
	\lim_{x\to\infty} x^{\frac{2}{\alpha-1}}\mathbb{P}(M^1\ge x) 
	=: 
	\Gamma_\alpha \in (0,\infty).
\end{equation}
A first glance to the order of $\mathbb{P}(M^d\ge x)$ in high dimension $d\geq 2$ can be obtained from \eqref{tail-one-dimensional-alpha-stable}. 
Wirting  $V(w)=\big(V^{(1)}(w), V^{(2)}(w),\cdots,V^{(d)}(w)\big)^T$ and  define 
\begin{equation}
	M^{d, (j),+}:=\sup_{n\ge 0}\max_{|w|=n}V^{(j)}(w)\quad \mbox{and} \quad M^{d, (j),-}:=-\inf_{n\ge 0}\min_{|w|=n}V^{(j)}(w).
\end{equation}
Combining the following inequality 
$$\sup_{1\leq j\leq d} M^{d, (j),+},  \sup_{1\leq j\leq d} M^{d, (j),-}\le  M^d \leq \sum_{j=1}^{d}M^{d, (j),+} + \sum_{j=1}^{d}M^{d, (j),-}$$
and
\eqref{tail-one-dimensional-alpha-stable},  we deduce that under   {\bf(H1)} and {\bf(H2)},
\begin{equation}\label{rough-behavior-M}
	0< \liminf_{x\to\infty} x^{\frac{2}{\alpha-1}} \mathbb{P}(M^d\ge x) \leq \limsup_{x\to\infty} x^{\frac{2}{\alpha-1}} \mathbb{P}(M^d\ge x)<\infty.
\end{equation}
This shows that the leading-order decay rate of $\mathbb{P}(M^d\ge x)$ is  $x^{-\frac{2}{\alpha-1}}$ as $x\to\infty$.

At the end of this subsection, we briefly review some related results on critical branching L\'evy process, which serves as the continuous-time analogue of the BRW.
We begin with the Brownian case. Let \(N_t\) denote the set of particles alive at time \(t\), and for each \(u\in N_t\), let \(\mathbf{X}_u(t)\) denote the position of particle \(u\) at time \(t\). 
For any fixed \(a>0\), Sawyer and Fleischman \cite{SF79} derived the tail asymptotics for the following function
\begin{equation}
	u(\mathbf{x}):=
	\left\{
	\begin{array}{ll}
		\mathbb{P}_{\mathbf{0}}(M^1\ge x), & \mbox{when } d=1,\\[1mm]
		\mathbb{P}_{\mathbf{x}}\!\left(\inf_{t>0}\inf_{u\in N_t}\|\mathbf{X}_u(t)\|<a\right), & \mbox{when } d\ge 2
	\end{array}
	\right.
\end{equation}
as \(x\to\infty\) for \(d=1\) and as \(\|\mathbf{x}\|\to\infty\) for \(d\ge 2\) based on the key observation that the function $u$ is the unique solution to the equation
\begin{equation}\label{SF-PDE}
	\frac{1}{2}\Delta u=-\beta\Big(\sum_{k=0}^\infty p_k u^k-u\Big),
\end{equation}
where $\beta$ is the branching rate and $\{p_k\}_{k\geq 0}$ is the offspring distribution.

Over the past several years, there have been many extensions of the work in Sawyer and Fleischman \cite{SF79} for the case \(d=1\).  
When the
Brownian motion is replaced by a centered L\'evy process, Hou et al. \cite{HJRS2025} proved that, if \(\lim_{n\to\infty}n^\alpha\sum_{k=n}^\infty p_k\in(0,\infty)\) exists and the spatial motion has a light positive tail, then an analogue of \cite{SF79} still holds, with the polynomial order \(x^2\) replaced by \(x^{\frac{2}{\alpha-1}}\). For non-centered L\'evy processes, we refer to Profeta \cite{Profeta2024} for the spectrally negative case and to Profeta \cite{Profeta2025} for the general case. Moreover, if the L\'evy process does not admit a second moment, then the tail behaviour of \(M^1\) is also different from that in \cite{SF79}; see \cite{HJRS2026, Lalley16, Profeta2022}.

\subsection{Critical super-Brownian motion}
In this subsection, we present several results on super-Brownian motion that will be used in the proofs of the main results.
For any domain $D \subset \mathbb{R}^d$, 
let $C_b^+(D)$ be the space of non-negative continuous bounded funciton on $D$ and let $B_b^+(D)$ be the space of non-negative  bounded funciton on $D$.
In the remainder of the paper, we write $X=\{X_t\}_{t\geq 0}$ for the 
$\mathbb{R}^d$-valued super-Brownian motion which appears as
the scaling limit of the critical BRW $\{Z_n\}_{n\ge 0}$.
More precisely, the spatial motion of $X$ is a $d$-dimensional Brownian motion $(W_t, \mathbb{P}_\mathbf{y})$ with  covariance
matrix $\Sigma=\mathbb{E}(\mathbf{X}\mathbf{X}^{\mathsf T})$ and 
the branching mechanism $\psi$ of $X$ is given by
\begin{equation}\label{Stable-branching}
	\psi(u):=	\mathcal{C}(\alpha) u^\alpha :=	\left\{\begin{array}{ll}
		\frac{	\kappa(\alpha)\Gamma(2-\alpha)}{\alpha-1} u^\alpha,\quad &\mbox{when}\  \alpha\in (1,2),\\
		\frac{1}{2}
		\left(\sum_{k=1}^\infty k(k-1)p_k\right)u^2 ,\quad &\mbox{when}\ \alpha=2.
	\end{array}\right.
\end{equation}
Equivalently, for every $\varphi\in C_b^+(\mathbb{R}^d)$, we have the 
log-Laplace representation
\begin{equation}
	\mathbb{E}_{\mu}\!\left[\exp\!\left(-\langle \varphi,  X_t\rangle\right)\right]
	=\exp\!\left(-\langle  u_{\varphi}(t,\cdot), \mu\rangle\right),
\end{equation}
where $u_{\varphi}(t,\mathbf{x})$ is the unique non-negative solution to the integral equation
\begin{equation}\label{pde}
	u_\varphi(t,\mathbf{x})
	= \mathbb{E}_{\mathbf{x}}(\varphi(W_t))
	-\mathcal{C}(\alpha) \int_0^t \mathbb{E}_{\mathbf{x}}\big( u_{\varphi}^\alpha(t-s, W_s) \big)\mathrm{d}s
	,\quad t\geq 0, \mathbf{x}\in \mathbb{R}^d.
\end{equation}
It is well-known (for example, see Kyprianou \cite[Theorem 12.5]{Kybook})
that the super-Brownian motion with branching mechanism $ \psi$ dies out at finite time almost surely.

Now we introduce the excursion measures $\{\mathbb{N}_\mathbf{y}, \mathbf{y}\in \mathbb{R}^d\}$ associated with $X$. 
Denote by $\mathcal{M}_F(\mathbb{R}^d)$ the space of finite Borel measures on $\mathbb{R}^d$. Let $\mathcal{W}_0^+$ denote the space of right continuous paths from $[0,+\infty)$ to $\mathcal{M}_F(\mathbb{R}^d)$ having zero as a trap.  
It follows from Li \cite[Corollary 2.8 and Theorem A.41]{LZbook} 
that there exists a unique family of $\sigma$-finite measures $\{\mathbb{N}_\mathbf{y}:\mathbf{y}\in\mathbb{R}^d\}$ on $\mathcal{W}_0^+$ such that 
for any $t>0$, $\mathbf{y}\in\mathbb{R}^d$ and $f \in B_b^+(\mathbb{R}^d)$,
\begin{equation}\label{N-measures}
	\mathbb{N}_\mathbf{y} \big(1-e^{-\left\langle f, w_t\right\rangle}\big)
	=-\log \mathbb{E}_{\delta_\mathbf{y}} \big(e^{-\langle f, X_t\rangle}\big).
\end{equation}
$\{\mathbb{N}_\mathbf{y} :\mathbf{y} \in\mathbb{R}^d\}$ are called the $\mathbb{N}$-measures associated to $\{\mathbb{P}_{\delta_\mathbf{y}}:\mathbf{y}\in\mathbb{R}^d\}$.

Next, we introduce the exit measure related to $X$. For any Borel measurable set $D\subset \mathbb{R}^d$, denote by
\[
\tau_D := \inf\{t>0 : W_t \notin D\}
\]
the first exit time of the Brownian motion $(W_t)_{t\ge0}$ from the domain $D$. According to Dynkin \cite[Sections 1.2 and 1.3]{Dynkin91}, each Borel measurable set $D$ corresponds to a pair of random measures $X^D$ and $Y^D$ such that 
for any 
$g \in B_b^+(\partial D)$ and  $f \in B_b^+(D)$,
the function
\begin{equation}\label{eq:Dynkin-Laplace-functional}
	V_{f,g}^D(\mathbf y):= - \log \mathbb E_{\delta_{\mathbf y}}
	\left[	\exp\left\{	- \langle g, X^D \rangle	-  \langle f, Y^D \rangle 	\right\}	\right]
\end{equation}
is the unique solution to the following integral equation
\begin{equation}\label{eq:Dynkin-integral-equation}
	V(\mathbf y)
	= \mathbb E_{\mathbf y}\!\left[g\!\left(W_{\tau_D}\right)\right]
	+ \mathbb E_{\mathbf y}\!\left[\int_0^{\tau_D} f(W_s)\,\mathrm ds\right]
	- \mathbb E_{\mathbf y}\!\left[\int_0^{\tau_D} \psi\!\left(V(W_s)\right)\,\mathrm ds\right].
\end{equation}
In the literature, usually $X^D$ is refered to as the exit measure whose support is a subset of $\partial D$.  

For $\mathbf{x}\in \mathbb{R}^d$ and 
$r\geq 0$,
let $D(\mathbf{x},r):=\{ \mathbf{z}\in \mathbb{R}^d:\| \mathbf{x}-\mathbf{z}\|< r\}$ and $\overline{D}(\mathbf{x},r):=\{ \mathbf{z}\in \mathbb{R}^d:\| \mathbf{x}-\mathbf{z}\|\leq r\}$ denote the open and closed balls centered at the $\mathbf{x}$ with radius $r$.
For notational convenience, we write $D_r:= D(\mathbf{0},r)$ and $\overline{D}_r:= \overline{D}(\mathbf{0},r)$.
In our paper, we focus on the special case $D=\overline{D}_1$.  The formal definition of $Y^{\overline{D}_1}$ was given in Dynkin \cite[Theorem 1.4]{Dynkin1991-2}, but we are going to  adapt the explanation in Dynkin and Kuznetsov \cite[Section 0.2]{DK1997}. Let $X_t^{D_1}$ be the restriction of $X_t$ on $D_1$ that denotes the mass distribution of particles still in $D_1$ up to time $t$, 
then 
\begin{align}\label{expression-of-Y}
	Y^{\overline{D}_1}= \int_0^\infty X_t^{D_1}\mathrm{d}t.
\end{align}

\subsection{Main results}
In this subsection, we present the main results of this paper. 
Define
\begin{equation}\label{Def-of-maximal}
	M_t^{X}:=\inf\{r\ge 0: w_t\left(D_r^c\right)=0\}\quad \mbox{and}\quad M^{X,d}:= \sup_{t>0} M_t^{X}. 
\end{equation}
The following theorem establishes the precise decay rate of the tail probability of $M^d$.
\begin{theorem}\label{thm-behavior-all-time-maximum}
	
	Assume {\bf(H1)} and {\bf(H2)}.
	For any $d \ge 1$ and $\mathbf{y} \in D_1$, we have 
	\begin{equation}
		\lim_{x\to\infty}x^{\frac{2}{\alpha-1}}\mathbb{P}_{\delta_{x\mathbf{y}}}\big(M^d\geq x\big)=\lim_{x\to\infty}x^{\frac{2}{\alpha-1}}\mathbb{P}_{\delta_{x\mathbf{y}}}\big(M^d> x\big)= \mathbb{N}_\mathbf{y}\big(M^{X,d}\geq 1\big).
	\end{equation}
\end{theorem}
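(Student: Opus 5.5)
We will prove the theorem by a scaling-limit argument: rescale space by $x$, time by $x^2$, and mass by $x^{2/(\alpha-1)}$, and identify the limit of $x^{\frac{2}{\alpha-1}}\mathbb{P}_{\delta_{x\mathbf y}}(M^d\ge x)$ with an $\mathbb{N}$-measure quantity for the exit problem from $\overline D_1$. Define $u(\mathbf y):=\mathbb{N}_\mathbf y(M^{X,d}\ge 1)$ for $\mathbf y\in D_1$. By Dynkin's theory, $\{M^{X,d}\ge1\}$ coincides $\mathbb{N}_\mathbf y$-a.e. with $\{X^{\overline D_1}\neq 0\}$ (the exit measure charges $\partial D_1$ iff the range reaches the sphere; this uses the path continuity of super-Brownian motion). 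Hence
\[
u(\mathbf y)=\lim_{\lambda\to\infty}V^{\overline D_1}_{0,\lambda}(\mathbf y),
\]
which is the maximal solution of $\frac12\nabla\cdot\Sigma\nabla u=\psi(u)$ in $D_1$ that blows up at the boundary. We will use that $u$ is finite and continuous in $D_1$ and satisfies $u(\mathbf y)\asymp (1-\|\mathbf y\|)^{-2/(\alpha-1)}$ near $\partial D_1$ (Keller--Osserman bounds).

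\textbf{Discrete equation.} Set $q_x(\mathbf z):=\mathbb{P}_{\delta_{\mathbf z}}(M^d\ge x)$. Conditioning on the first generation gives
\[
1-q_x(\mathbf z)=f\big(\mathbb{E}[1-q_x(\mathbf z+\mathbf X)]\big),\qquad f(s)=\textstyle\sum_k p_k s^k,
\]
for $\|\mathbf z\|<x$, with $q_x=1$ outside. Put $v_x(\mathbf y):=x^{\frac{2}{\alpha-1}}q_x(x\mathbf y)$. By (H1), $f(1-s)-1+s\sim \mathcal C(\alpha)s^\alpha$ (in the stable case via a Tauberian argument). Thus $v_x$ satisfies a discrete analogue of the stopped integral equation \eqref{eq:Dynkin-integral-equation}, with the random walk scaled by $x^{-1}$ in space and $x^{-2}$ in time. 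The a priori bound \eqref{rough-behavior-M}, applied after shifting the starting point and using that the distance to the boundary is $x(1-\|\mathbf y\|)$, gives
\[
v_x(\mathbf y)\le C(1-\|\mathbf y\|)^{-2/(\alpha-1)}
\]
uniformly in $x$. Here one must control the overshoot: a direct jump past the sphere at distance $\delta x$ has probability $o((\delta x)^{-2\alpha/(\alpha-1)})$ per particle by (H2), and the expected number of particles visiting the relevant region is $O(x^2)$, which is why the moment $\frac{2\alpha}{\alpha-1}$ is exactly what is needed.

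\textbf{Convergence.} For the lower bound, fix $r<1$ and use the event that the BRW started at $x\mathbf y$ reaches the sphere of radius $rx$, and compare with $\mathbb{N}_\mathbf y(M^{X,d}\ge r)$ via weak convergence of rescaled BRW to super-Brownian motion under $\mathbb{N}$. Concretely, start with $x^{2/(\alpha-1)}$ particles, so that $\mathbb{P}$ of reaching equals $1-(1-q_x)^{x^{2/(\alpha-1)}}\approx 1-e^{-v_x}$, and compare with $1-e^{-u}$ through convergence of the range of the rescaled BRW. For the upper bound, use equicontinuity of $v_x$ on compacts of $D_1$, which follows from the integral equation and the local central limit theorem. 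Extract a subsequential limit $v$ solving $\frac12\nabla\cdot\Sigma\nabla v=\psi(v)$ in $D_1$ with $v\le u$. The lower bound with $r\uparrow1$ gives $v\ge\lim_{r\uparrow1}u_r=u$ by monotone continuity of $\mathbb{N}_\mathbf y(M^{X,d}\ge r)$ in $r$. Uniqueness then forces $v=u$. Finally, $\ge x$ versus $>x$ differ by an amount bounded between $q_x$ and $q_{x+\epsilon x}$, and the continuity $\mathbb{N}_\mathbf y(M^{X,d}\ge r)\to\mathbb{N}_\mathbf y(M^{X,d}\ge1)$ as $r\downarrow1$ closes the argument.

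\textbf{Main obstacle.} The hardest step is the upper bound near the boundary: ruling out that the BRW reaches $\{\|\cdot\|\ge x\}$ via particles that stay at distance $o(x)$ from the sphere with non-negligible rescaled probability. Equivalently, one must show that the limit $v$ blows up no faster than $u$ and that no mass escapes through the boundary layer. To handle this, I would first try a barrier (comparison) argument. Take $\tilde u$ to be the maximal solution on a slightly larger ball $D_{1+\epsilon}$, i.e. $\tilde u=u_{1+\epsilon}$, and show that $x^{-2/(\alpha-1)}\tilde u(\cdot/x)$ is a supersolution of the discrete equation up to errors. The errors come from the Taylor expansion of $\tilde u$ and from the jump moments; they are controlled by (H2) because $\tilde u$ is bounded on $\overline D_1$. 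The discrete maximum principle then yields $v_x\le \tilde u+o(1)$, and letting $\epsilon\downarrow0$ gives the upper bound.
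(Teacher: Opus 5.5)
The gap is in the upper bound. You flag it as the main obstacle, but the barrier you use to attack it points the wrong way.

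For $\mathbf y\in D_1$ the maximal solution on the larger ball is $u_{1+\epsilon}(\mathbf y)=\mathbb N_{\mathbf y}(M^{X,d}\ge 1+\epsilon)$, which is strictly smaller than $u(\mathbf y)$. So the inequality $v_x\le u_{1+\epsilon}+o(1)$ you aim for would contradict the (true) lower bound $\liminf_x v_x(\mathbf y)\ge u(\mathbf y)$; the comparison cannot go through. It fails in the boundary layer. A discrete supersolution must dominate $q_x\equiv 1$ on $\overline D_x^c$ and satisfy the supersolution inequality at points at bounded distance from the sphere. There $q_x$ is of order one, e.g.\ $q_x(\mathbf z)\ge (1-p_0)\mathbb P(\|\mathbf z+\mathbf X\|>x)$, whereas $x^{-2/(\alpha-1)}u_{1+\epsilon}(\cdot/x)=O(x^{-2/(\alpha-1)})$. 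Boundedness of $\tilde u$ on $\overline D_1$ is why the argument breaks, not why the errors are small.

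The comparison object must dominate $u$, i.e.\ come from a slightly \emph{smaller} ball, and the annulus in between must be handled separately. This is what the paper does:
\begin{itemize}
\item it conditions on the exit measure $Z_{\overline D_{x(1-\varepsilon)}}$;
\item it discards the overshoot event $\{Z_{\overline D_{x(1-\varepsilon)}}(D^c_{x(1-\varepsilon/2)})>0\}$ using Lemma~\ref{Tightness} and the $\frac{2\alpha}{\alpha-1}$-moment;
\item it uses \eqref{rough-behavior-M} to bound by $C_*(\varepsilon)x^{-2/(\alpha-1)}$ the chance that any remaining exited particle travels on to radius $x(1-\varepsilon/4)$.
\end{itemize}
This replaces the indicator by a Laplace functional of the exit measure with finite parameter $\theta_1$. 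Its rescaled limit is at most $(1-\varepsilon)^{-2/(\alpha-1)}V_{(0,\infty)}(\mathbf y/(1-\varepsilon))$, which by scaling is $\mathbb N_{\mathbf y}(M^{X,d}\ge 1-\varepsilon)$, the analogue of $u_{1-\varepsilon}$ rather than $u_{1+\epsilon}$. Continuity of $V_{(0,\infty)}$ then lets $\varepsilon\downarrow0$.

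Within your PDE framework the boundary layer could in fact be sidestepped. If a subsequential limit $v$ is a genuine solution in $D_1$, then $v\le u$ follows from maximality of $u$ (compare with the large solutions on $D_R$ and let $R\uparrow 1$), and the barrier becomes unnecessary. But then the weight falls on equicontinuity and on passing to the limit in the interior equation, which you only assert. The local CLT is not the right tool under \textbf{(H2)}: the jump law may be lattice or singular, so walks started at nearby points can have mutually singular laws, and $v_x$ need not be continuous. The paper gets asymptotic equicontinuity from the exit-time comparison for shifted balls (Lemma~\ref{lemma-step2}), applied to the bounded functionals $U_{(\gamma,\theta)}$ (Proposition~\ref{lemma-expression-U}). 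It identifies the limit by uniqueness for Dynkin's equation with bounded boundary datum $\theta$.

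Your lower bound also runs the wrong way as written. For $r<1$ one has $\{M^d\ge x\}\subset\{M^d\ge rx\}$, so comparing with $\mathbb N_{\mathbf y}(M^{X,d}\ge r)$ only bounds $q_x$ from above, and ``$v\ge u_r$'' is false since $u_r>u$. You need the open-set portmanteau bound for $\{\exists t:\ w_t(\overline D_1^c)>0\}$, giving $\liminf_x v_x(\mathbf y)\ge \mathbb N_{\mathbf y}(M^{X,d}>1)$, followed by $r\downarrow 1$ via scaling and continuity of $u$. The paper instead uses $1_{\{z>0\}}\ge 1-e^{-\theta z}$ together with $U_{(0,\theta)}\to V_{(0,\theta)}\uparrow V_{(0,\infty)}$.
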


The following theorem, the second main result of this paper, describes the asymptotic 
behaviour
of $M_n^d$ as $n\to\infty$ and extends Lalley and Shao \cite[Theorem 3]{Lalley15}, to dimensions $d\geq 2$.

\begin{theorem}\label{thm-behavior-Mn}
	
	Assume {\bf(H1)} and {\bf(H2)}.
	For any $\mathbf{y}\in \mathbb{R}^d$ and 
	$r> 0$, we have 
	\begin{equation}
		\lim_{n\to\infty}n^{\frac{1}{\alpha-1}}\mathbb{P}_{\delta_{\sqrt{n}\mathbf{y}}}\big(M_n^d > \sqrt{n}r\big)
		=\mathbb{N}_\mathbf{y}\big( M_1^{X}> r\big).
	\end{equation}
\end{theorem}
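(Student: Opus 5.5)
Set $u_n(\mathbf{y}):=\mathbb{P}_{\delta_{\sqrt{n}\mathbf{y}}}(M_n^d>\sqrt{n}r)=1-\mathbb{E}_{\delta_{\sqrt n\mathbf y}}\big[\prod_{|w|=n}\mathbf 1_{\{\|V(w)\|\le \sqrt n r\}}\big]$. The plan is to prove convergence of the rescaled Laplace functionals and then remove the smoothing. By the branching property, $u_n$ solves the discrete backward recursion
\[
1-u_{k+1}(\mathbf x)=f\big(1-\mathbb{E}\,u_k(\mathbf x+\mathbf X)\big),\qquad f(s)=\sum_k p_ks^k,
\]
with $u_0=\mathbf 1_{D_{\sqrt n r}^c}$. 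Writing $f(1-v)=1-v+L(v)$, where $L(v)\sim \mathcal C(\alpha)v^\alpha$ as $v\downarrow 0$ by {\bf(H1)}, this becomes $u_{k+1}=P u_k-L(Pu_k)$, with $P$ the jump operator. Iterating gives a discrete Duhamel formula
\[
u_n=P^n u_0-\sum_{k=0}^{n-1}P^{n-1-k}L(Pu_k).
\]
Put $v_n(t,\mathbf y):=n^{\frac1{\alpha-1}}u_{\lfloor nt\rfloor}(\sqrt n\mathbf y)$ for the problem with initial data $n^{\frac1{\alpha-1}}\mathbf 1_{D_r^c}(\cdot/\sqrt n)$. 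After multiplying by $n^{\frac1{\alpha-1}}$ and time-rescaling, the formula becomes a Riemann-sum analogue of \eqref{pde} with initial data $\theta\mathbf 1_{D_r^c}$ and $\theta=n^{1/(\alpha-1)}\to\infty$. The first difficulty is that this initial datum blows up. So I would first treat the smoothed problem with initial data $\theta\varphi$, where $\theta$ is fixed and $\varphi\in C_b^+$. There I would show $v_n\to u_{\theta\varphi}$ locally uniformly, using Donsker/local CLT control of $P^{\lfloor nt\rfloor}$ and the a priori bound $n^{\frac1{\alpha-1}}u_k\le C(k/n)^{-1/(\alpha-1)}$. That bound comes from the survival-probability asymptotics of critical branching in the $\alpha$-stable domain (Slack's theorem), and it lets us replace $n^{\frac{\alpha}{\alpha-1}}L(n^{-\frac1{\alpha-1}}v)$ by $\mathcal C(\alpha)v^\alpha$ with dominated convergence in the time integral. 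Uniqueness for \eqref{pde} then identifies the limit as $-\log\mathbb E_{\delta_{\mathbf y}}e^{-\theta\langle\varphi,X_1\rangle}=\mathbb N_{\mathbf y}(1-e^{-\theta\langle\varphi,w_1\rangle})$ by \eqref{N-measures}.

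Next comes the passage $\theta\to\infty$ and $\varphi\uparrow\mathbf 1_{D_r^c}$. Write $A=\{M_n^d>\sqrt nr\}$. On one side, by monotonicity,
\[
u_n\ge 1-\mathbb E\exp\big(-\theta n^{-\frac1{\alpha-1}}\langle\varphi(\cdot/\sqrt n),Z_n\rangle\big)
\]
for $\varphi\le\mathbf 1_{D_r^c}$. This gives $\liminf\ge\mathbb N_{\mathbf y}(1-e^{-\theta\langle\varphi,w_1\rangle})\uparrow\mathbb N_{\mathbf y}(\langle\mathbf 1_{D_r^c},w_1\rangle>0)=\mathbb N_{\mathbf y}(M_1^X>r)$, using the open-set version. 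The matching upper bound is the main obstacle. The difficulty is that $A$ can occur with only very few particles beyond $\sqrt n r$, which carry negligible rescaled mass $\langle\cdot,Z_n\rangle n^{-1/(\alpha-1)}$. I would handle it with a ``short look-back'' argument. Fix small $\delta>0$ and condition on generation $m=n-\lfloor\delta n\rfloor$. Given $Z_m$, the conditional probability of $A$ is at most
\[
\sum_{|w|=m}\mathbb P_{\delta_{V(w)}}\big(M^d_{\lfloor\delta n\rfloor}>\sqrt nr\big).
\]
For particles at distance $\le \sqrt n(r-\epsilon)$, I bound this by $\delta$-dependent tail estimates, namely a first-moment bound times survival over $\delta n$ steps combined with a Gaussian displacement estimate using {\bf(H2)}; their contribution vanishes as $\delta\to0$ for fixed $\epsilon$. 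The remaining event is essentially that some particle at generation $m$ lies beyond $\sqrt n(r-\epsilon)$, and the mass there is then controlled via the first step applied at time $1-\delta$. Concretely, I would show
\[
\limsup_n n^{\frac1{\alpha-1}}u_n\le \mathbb N_{\mathbf y}\Big(1-\exp\big(-\langle g_{\delta,\epsilon},w_{1-\delta}\rangle\big)\Big)+o_{\delta}(1),
\]
where $g_{\delta,\epsilon}(\mathbf z)=\lim_n n^{\frac1{\alpha-1}}\mathbb P_{\delta_{\sqrt n\mathbf z}}(M^d_{\lfloor\delta n\rfloor}>\sqrt nr)$ is bounded via Theorem \ref{thm-behavior-all-time-maximum}-type estimates (uniform $x^{-2/(\alpha-1)}$ tails, which also control the large-jump contributions using the $\frac{2\alpha}{\alpha-1}$ moment). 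By the Markov property under $\mathbb N_{\mathbf y}$, the right side equals $\mathbb N_{\mathbf y}\big(\sup_{s\in[1-\delta,1]}M^X_s>r-\epsilon\big)$-like quantities. Letting $\delta\to0$ and then $\epsilon\to0$, I would use right continuity of $w$ and the fact that $\mathbb N_{\mathbf y}(M_1^X=r)=0$; the latter follows from the absolute continuity of $X_1$, a known property of super-Brownian motion in $d$ dimensions. This matches the lower bound and proves Theorem \ref{thm-behavior-Mn}.
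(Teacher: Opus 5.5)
Your lower bound matches the paper's Step 1. Your discrete Duhamel argument re-derives the Watanabe-type limit \eqref{appl-invariance-principle}, which the paper simply quotes. Your look-back is the architecture of the paper's Step 2: condition on generation $n-\lfloor\delta n\rfloor$, split at radius $\sqrt n(r-\epsilon)$, make the inner part $O(\delta)$, and pass the outer part through the invariance principle at time $1-\delta$. The gap is in how you close the upper bound.

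\emph{The final limit is not justified, and neither fact you invoke gives it.} Use only the survival bound $c_*(\delta n)^{-1/(\alpha-1)}$ for outer particles and a continuous majorant of $\mathbf 1_{\{\|\cdot\|>r-\epsilon\}}$. After $\theta\to\infty$ you get $\mathbb N_{\mathbf y}\big(w_{1-\delta}(\overline D^c_{r-2\epsilon})>0\big)$, or your $\sup_{s\in[1-\delta,1]}$ variant. You must then identify its limit as $\delta\to0$, then $\epsilon\to0$, with $\mathbb N_{\mathbf y}(M_1^X>r)$.
\begin{itemize}
\item Time approaches $1$ from the left, so right-continuity of $w$ is irrelevant.
\item Even weak continuity at the fixed time $1$ would not help. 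For open $U$ the map $\nu\mapsto\mathbf 1_{\{\nu(U)>0\}}$ is only lower semicontinuous, which is the wrong direction for an upper bound: mass outside the ball can disappear in the limit.
\item Absolute continuity of $X_1$ does not give $\mathbb N_{\mathbf y}(M_1^X=r)=0$. It is false in general: for $\alpha=2$, $X_1$ is singular in every $d\ge2$. Where it holds, it only yields $w_1(\partial D_r)=0$, not the absence of an atom at $r$ for the outer radius of the support.
\end{itemize}
The missing ingredient is the paper's device.
\begin{itemize}
\item Rescale at generation $k_n$ by $\sqrt{k_n}$, so the limit object always sits at time $1$. It starts from $(1-s)^{-1/2}\mathbf y$ with prefactor $(1-s)^{-1/(\alpha-1)}$, and is bounded by $-\log\mathbb P_{\delta_{\mathbf y'}}\big(X_1(\overline D^c_{r-2\delta})=0\big)$.
\item Use \eqref{scaling-property} to turn the change of radius into a change of time.
\item Use the Markov property at time $1/2$ to write the quantity as $\kappa^{-1/(\alpha-1)}u_\varphi(1/\kappa-1/2,\mathbf y'/\sqrt\kappa)$, with the bounded measurable $\varphi(\mathbf x)=-\log\mathbb P_{\delta_{\mathbf x}}(X_{1/2}(\overline D_r^c)=0)$.
\end{itemize}
Joint continuity of $u_\varphi$ in $(t,\mathbf y)$ for $t>0$ then gives both limits at once. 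In particular it proves the no-atom property you need.

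There are two smaller issues.
\begin{itemize}
\item Your $g_{\delta,\epsilon}$ is circular. It is defined as a limit that, by Brownian scaling, is the theorem itself at scale $\delta n$ (it would equal $\mathbb N_{\mathbf z}(M^X_\delta>r)$). You only need the crude survival bound there.
\item Your inner-particle estimate, as described, is not small in $\delta$. Survival over $\delta n$ steps contributes $n^{1/(\alpha-1)}(\delta n)^{-1/(\alpha-1)}=\delta^{-1/(\alpha-1)}$, and $\mathbb P(\|\mathbf S_{\lfloor\delta n\rfloor}\|>\epsilon\sqrt n)$ is of order one in $n$. What you need is $\mathbb P\big(M^d_{\lfloor\delta n\rfloor}>\epsilon\sqrt n\big)\le C(\epsilon)\,\delta\,n^{-1/(\alpha-1)}$, which the paper imports from Zhang's one-dimensional lemma.
\end{itemize}
This inner estimate also follows from three pieces:
\begin{itemize}
\item the expected number of particles first leaving $\overline D_{\epsilon\sqrt n/2}$ within $\lfloor\delta n\rfloor$ generations, which is at most $4\delta\eta^2/\epsilon^2$ by Doob's inequality;
\item times the all-time tail \eqref{rough-behavior-M} at level $\epsilon\sqrt n/4$;
\item plus an overshoot term $\delta n\,\mathbb P(\|\mathbf X\|>\epsilon\sqrt n/4)=o\big(n^{-1/(\alpha-1)}\big)$, which holds by {\bf(H2)}.
\end{itemize}
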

Consequently, we have the following Yaglom-type conditional limit theorem.
\begin{corollary}\label{Cor}
	
	Assume {\bf(H1)} and {\bf(H2)}.
	As $n\to \infty$, the following convergence in distribution holds:
	\begin{equation}
		\mathbb{P}_{\delta_{\sqrt{n}\mathbf{y}}}\Big(\frac{M_n^d}{\sqrt{n}} \in \cdot \Big| Z_n(\mathbb{R}^d)>0\Big)  
		\xRightarrow[]{\ d\ } \mathbb{N}_{\mathbf{y}}\big(M_1^X \in \cdot|w_1(\mathbb{R}^d)\neq 0\big).
	\end{equation}
\end{corollary}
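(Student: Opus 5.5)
The plan is to derive the corollary directly from Theorem \ref{thm-behavior-Mn}, applied once with a general $r>0$ and once in the limit $r\downarrow 0$ to capture the survival probability. Write
\[
\mathbb{P}_{\delta_{\sqrt{n}\mathbf{y}}}\Big(\frac{M_n^d}{\sqrt{n}}>r\,\Big|\,Z_n(\mathbb{R}^d)>0\Big)=\frac{n^{\frac{1}{\alpha-1}}\mathbb{P}_{\delta_{\sqrt{n}\mathbf{y}}}(M_n^d>\sqrt{n}r)}{n^{\frac{1}{\alpha-1}}\mathbb{P}(Z_n(\mathbb{R}^d)>0)},\qquad r>0 .
\]
The numerator converges to $\mathbb{N}_\mathbf{y}(M_1^X>r)$ by Theorem \ref{thm-behavior-Mn}. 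For the denominator, the survival probability does not depend on the starting point. By the classical result for critical Galton--Watson processes in the domain of attraction of an $\alpha$-stable law (Slack), $n^{\frac{1}{\alpha-1}}\mathbb{P}(Z_n(\mathbb{R}^d)>0)\to (\mathcal{C}(\alpha)(\alpha-1))^{-\frac{1}{\alpha-1}}$.

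The next step is to identify this constant as $\mathbb{N}_\mathbf{y}(w_1(\mathbb{R}^d)\neq 0)$. Take $f\equiv\lambda$ in \eqref{N-measures}: the solution of \eqref{pde} with constant data is spatially constant and solves $u'=-\mathcal{C}(\alpha)u^\alpha$. Letting $\lambda\to\infty$ gives
\[
\mathbb{N}_\mathbf{y}\big(w_1(\mathbb{R}^d)\neq0\big)=\big(\mathcal{C}(\alpha)(\alpha-1)\big)^{-\frac{1}{\alpha-1}}\in(0,\infty).
\]
As a cross-check avoiding Slack's theorem, note that $\{M_1^X>r\}\uparrow\{w_1\neq0\}$ as $r\downarrow0$ and $\{M_n^d>\sqrt{n}r\}\uparrow\{Z_n>0\}$. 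Combining this with $\mathbb{P}(M_n^d>\sqrt nr)\le\mathbb{P}(Z_n>0)$ gives the liminf of the denominator directly, and Slack's theorem supplies the matching limit.

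It follows that for each $r>0$ the conditional tail converges to $\mathbb{N}_\mathbf{y}(M_1^X>r\mid w_1(\mathbb{R}^d)\neq0)$. Tails at all $r>0$ determine the law on $[0,\infty)$. Convergence of the tail functions at every $r>0$, and hence at all continuity points, gives weak convergence, since the limit is a probability measure on $[0,\infty)$. On $\{w_1\neq0\}$ we have $M_1^X<\infty$ because $w_1$ is a finite measure with compact support a.s., so no mass escapes to infinity. The value at $r=0$ is harmless: conditioned on survival $M_n^d\geq0$, and the limit puts mass $\lim_{r\downarrow0}$ consistently.

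The only real obstacle is identifying the denominator's limit with the $\mathbb{N}$-measure of survival. This amounts to matching the Slack constant with $\mathcal{C}(\alpha)$ as normalized in \eqref{Stable-branching}. I would verify it through the ODE computation above, checking in particular that the $\alpha\in(1,2)$ constant $\kappa(\alpha)\Gamma(2-\alpha)/(\alpha-1)$ is exactly the coefficient appearing in the generating-function expansion $f(s)-s\sim\mathcal{C}(\alpha)(1-s)^\alpha$.
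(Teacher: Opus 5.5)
Your argument is correct and is the paper's proof: write the conditional tail as the ratio of $n^{\frac{1}{\alpha-1}}\mathbb{P}_{\delta_{\sqrt{n}\mathbf{y}}}(M_n^d>\sqrt{n}r)$, handled by Theorem~\ref{thm-behavior-Mn}, to the rescaled survival probability, then conclude weak convergence from convergence of the tails at every $r>0$. The only difference is that the paper quotes $n^{\frac{1}{\alpha-1}}\mathbb{P}(Z_n(\mathbb{R}^d)>0)\to\mathbb{N}_{\mathbf{y}}(w_1(\mathbb{R}^d)>0)$ from the literature. You instead derive it from Slack's theorem together with the explicit value $\mathbb{N}_{\mathbf{y}}(w_1(\mathbb{R}^d)\neq 0)=(\mathcal{C}(\alpha)(\alpha-1))^{-\frac{1}{\alpha-1}}$; your constant matching is consistent with Lemma~\ref{lemma-property-phi}, since $\phi(u)\sim\mathcal{C}(\alpha)u^{\alpha}$ as $u\downarrow 0$.
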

Define
\begin{equation}
	\zeta^X:=\int_0^\infty \langle 1, w_t\rangle\mathrm{d}t.
\end{equation}
The asymptotic behaviour for the total progeny $\zeta$ of the BRW is proved in the following theorem.

\begin{theorem}\label{thm-total-progeny}
	
	Assume {\bf(H1)} and {\bf(H2)}. 
	For each $  \mathbf{y} \in D_1$, as $x\to\infty$,  
	\begin{equation}
		\mathbb{P}_{\delta_{x\mathbf{y}}}\Big(\frac{\zeta}{x^{\frac{2\alpha}{\alpha-1}}}\in \cdot \Big|M^d\geq x\Big)\xRightarrow[]{\ d\ }  \mathbb{N}_\mathbf{y}\big(\zeta^X\in \cdot \big| M^{X,d}\geq 1\big).
	\end{equation}
\end{theorem}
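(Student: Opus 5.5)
We prove Theorem \ref{thm-total-progeny} through joint Laplace functionals, using Theorem \ref{thm-behavior-all-time-maximum} for the normalisation. Fix $\lambda>0$. Since $\mathbb{P}_{\delta_{x\mathbf y}}(M^d\ge x)\sim x^{-\frac{2}{\alpha-1}}\mathbb{N}_\mathbf y(M^{X,d}\ge1)$ and the limit is positive, it suffices to show
\begin{equation}
\lim_{x\to\infty}x^{\frac{2}{\alpha-1}}\,\mathbb{E}_{\delta_{x\mathbf y}}\Big[\Big(1-e^{-\lambda\zeta/x^{\frac{2\alpha}{\alpha-1}}}\Big)\mathbf 1_{\{M^d\ge x\}}\Big]=\mathbb{N}_\mathbf y\Big[\big(1-e^{-\lambda\zeta^X}\big)\mathbf 1_{\{M^{X,d}\ge1\}}\Big].
\end{equation}
The left side equals $u_\lambda^x(x\mathbf y)+v^x(x\mathbf y)-w^x_\lambda(x\mathbf y)$, where
\begin{equation}
u^x_\lambda(\mathbf z)=1-\mathbb{E}_{\delta_\mathbf z}\big[e^{-\lambda\zeta/x^{\frac{2\alpha}{\alpha-1}}}\big],\qquad v^x(\mathbf z)=\mathbb{P}_{\delta_\mathbf z}(M^d\ge x),
\end{equation}
and $w^x_\lambda(\mathbf z)=1-\mathbb{E}_{\delta_\mathbf z}\big[e^{-\lambda\zeta/x^{\frac{2\alpha}{\alpha-1}}}\mathbf 1_{\{M^d<x\}}\big]$. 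The same decomposition holds under $\mathbb{N}_\mathbf y$, with $\mathbb{N}_\mathbf y(1-e^{-\lambda\zeta^X})$, $\mathbb{N}_\mathbf y(M^{X,d}\ge1)$, and $\mathbb{N}_\mathbf y(1-e^{-\lambda\zeta^X}\mathbf 1_{\{M^{X,d}<1\}})$. The middle term is handled directly by Theorem \ref{thm-behavior-all-time-maximum}, so the work is to prove convergence of the first and third terms.

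For the first term, $u^x_\lambda$ satisfies the discrete recursion
\begin{equation}
1-u^x_\lambda(\mathbf z)=e^{-\lambda x^{-\frac{2\alpha}{\alpha-1}}}f\big(1-\mathbb{E}[u^x_\lambda(\mathbf z+\mathbf X)]\big),
\end{equation}
where $f$ is the generating function of $N$. Rescale space by $x$ and time by $x^2$, and set $U^x(\mathbf z)=x^{\frac{2}{\alpha-1}}u^x_\lambda(x\mathbf z)$. By (H1), $f(1-s)-(1-s)\sim\mathcal C(\alpha)s^\alpha$, and with the invariance principle from (H2) this should give the stationary equation $\frac12\nabla\cdot\Sigma\nabla U=\mathcal C(\alpha)U^\alpha-\lambda$. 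Equivalently, $U$ is the Laplace exponent of $\lambda\int_0^\infty\langle1,X_t\rangle\,\mathrm dt$, which is $\mathbb N_\mathbf y(1-e^{-\lambda\zeta^X})$. Since $\zeta^X$ does not depend on space, I would rather use the one-dimensional total progeny of a critical Galton--Watson tree in the $\alpha$-stable domain. The known convergence $\zeta/n^{\alpha/(\alpha-1)}$ of total progeny (equivalently, Lukasiewicz path convergence to a stable Lévy process) gives this term directly.

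For the third term, observe that $w^x_\lambda$ is the Laplace-type functional of the BRW \emph{killed} on exiting $D_x$, with "$1$" assigned to killed mass. It therefore corresponds to the Dynkin functional $V^{\overline D_1}_{f,g}$ of \eqref{eq:Dynkin-Laplace-functional} with $f\equiv\lambda$ and $g=\infty$ on the boundary (a limit of $g=n$). By \eqref{expression-of-Y}, $\langle1,Y^{\overline D_1}\rangle$ is the occupation time inside the ball. On $\{M^{X,d}<1\}$ this equals $\zeta^X$, and the limit is $\mathbb N_\mathbf y\big(1-e^{-\lambda\zeta^X}\mathbf 1_{\{X^{D}=0\}}\big)$, where $X^D=0$ matches $M^{X,d}<1$ up to a null set (handled as in Theorem \ref{thm-behavior-all-time-maximum}, which gives equality of the $\ge$ and $>$ limits). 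The plan is to repeat the proof of Theorem \ref{thm-behavior-all-time-maximum} verbatim, carrying the extra killing term $e^{-\lambda x^{-2\alpha/(\alpha-1)}}$ per particle. That proof presumably controls the BRW first up to a large time $Tx^2$ by weak convergence to super-Brownian motion, conditions on survival using $\mathbb P(Z_{n}>0)\sim c n^{-1/(\alpha-1)}$, and then controls the remaining contribution.

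\textbf{Main obstacle.} The main difficulty is uniformity in the third term. We must exchange the limits $x\to\infty$ and $T\to\infty$ while showing that the time after $Tx^2$ contributes negligibly both to $\zeta/x^{2\alpha/(\alpha-1)}$ and to the event of exiting $D_x$. I would control the extinction time via the survival bound $\mathbb P(Z_{Tx^2}>0)\le CT^{-1/(\alpha-1)}x^{-2/(\alpha-1)}$: on the complement the progeny is already determined. Weak convergence of the rescaled process then gives convergence of the pair $\big(\sum_{n\le Tx^2}Z_n(\mathbb R^d)/x^{2\alpha/(\alpha-1)},\ \sup_{n\le Tx^2}M_n^d/x\big)$ under $x^{\frac2{\alpha-1}}\mathbb P_{\delta_{x\mathbf y}}$ to its $\mathbb N_\mathbf y$ counterpart. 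The boundary event $\{M^{X,d}=1\}$ has $\mathbb N_\mathbf y$-measure zero, which follows from the equality of the two limits in Theorem \ref{thm-behavior-all-time-maximum}. The moment condition (H2) enters only through the tightness of the maximal displacement already used there.
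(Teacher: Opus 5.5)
Your decomposition $u^x_\lambda+v^x-w^x_\lambda$ is the one the paper uses, up to the event $\{M^d=x\}$, which is negligible by Theorem \ref{thm-behavior-all-time-maximum}. Your handling of the first two terms is sound. The middle term is Theorem \ref{thm-behavior-all-time-maximum}. The first term only needs $1-\mathbb{E}(e^{-s\zeta})\sim(s/\mathcal{C}(\alpha))^{1/\alpha}$ as $s\downarrow0$. The paper gets this from the fixed-point equation $\mathbb{E}(e^{-s\zeta})=e^{-s}\sum_k p_k\,\mathbb{E}(e^{-s\zeta})^k$ and Lemma \ref{lemma-property-phi}, and matches the constant with $\mathbb{N}_{\mathbf y}(1-e^{-\lambda\zeta^X})$ in Lemma \ref{lemma-aux-result}.

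The gap is in the third term, which carries all the content. You claim that weak convergence of the rescaled process gives convergence of the pair $\big(x^{-\frac{2\alpha}{\alpha-1}}\sum_{n\le Tx^2}Z_n(\mathbb{R}^d),\,x^{-1}\sup_{n\le Tx^2}M^d_n\big)$ under $x^{\frac{2}{\alpha-1}}\mathbb{P}_{\delta_{x\mathbf y}}$. This fails as stated. The running maximal displacement is not continuous in the measure-valued path: a few particles of vanishing rescaled mass can reach distance $x$ without changing the weak limit. It is only lower semicontinuous, so weak convergence gives at best the lower bound. The upper bound, which rules out such light escaping mass, is the hard direction, and nothing in your sketch provides it. There are three further problems:
\begin{itemize}
\item Path-space convergence of the $\sigma$-finite measures $x^{\frac{2}{\alpha-1}}\mathbb{P}_{\delta_{x\mathbf y}}$ to $\mathbb{N}_{\mathbf y}$ is not established. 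The paper uses only the fixed-time functional \eqref{appl-invariance-principle} and the exit-measure equation.
\item $\mathbb{N}_{\mathbf y}(M^{X,d}=1)=0$ does not follow from the equality of the ``$\ge$'' and ``$>$'' limits in Theorem \ref{thm-behavior-all-time-maximum}. It follows instead from scaling and the continuity of $V_{(0,\infty)}$.
\item The proof of Theorem \ref{thm-behavior-all-time-maximum} that you plan to copy is not a time-truncation argument. It works with the exit measure $Z_{\overline{D}_x}$.
\end{itemize}

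The missing idea is never to handle the boundary weight $g=\infty$ directly. For finite $\theta$, the rescaled exit-measure functional $U_{(\gamma,\theta)}(\mathbf y;x)$ solves a discrete analogue of \eqref{eq:Dynkin-integral-equation} (Proposition \ref{lemma-expression-U}). Compactness plus uniqueness then give $U_{(\gamma,\theta)}(\mathbf y;x)\to V_{(\gamma,\theta)}(\mathbf y)$ (Proposition \ref{Uniqueness}). The limits $\theta\to\infty$ and $x\to\infty$ are exchanged through the monotone sandwich
\[
0\le U_{(\gamma,\infty)}(\mathbf y;x)-U_{(\gamma,\theta)}(\mathbf y;x)\le x^{\frac{2}{\alpha-1}}\mathbb{P}_{\delta_{x\mathbf y}}(M^d>x)-U_{(0,\theta)}(\mathbf y;x).
\]
As $x\to\infty$ the right-hand side converges to $V_{(0,\infty)}(\mathbf y)-V_{(0,\theta)}(\mathbf y)$, which tends to $0$ as $\theta\to\infty$ (Lemma \ref{lemma-asymptotic-Laplace-transfrom}). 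Since $U_{(\lambda,\infty)}(\mathbf y;x)$ equals your $x^{\frac{2}{\alpha-1}}w^x_\lambda(x\mathbf y)$ up to the $\{M^d=x\}$ term, Theorem \ref{thm-behavior-all-time-maximum} is exactly the uniform control your weak-convergence step lacks.

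Your route could in principle be repaired along the same lines. Lower semicontinuity would give one inequality. The marginal convergence in Theorem \ref{thm-behavior-all-time-maximum}, together with $\mathbb{N}_{\mathbf y}(M^{X,d}=1)=0$, would give the other via a portmanteau argument. But this first requires a path-space limit theorem for the rescaled canonical measures, which you neither prove nor cite.
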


\textbf{Organization of the paper.} 
The rest of the paper is organized as follows. In Section \ref{S2}, we present and prove several elementary properties of high-dimensional random walks that will be used in the proofs of the main results. In Section \ref{S3}, we introduce a discrete-time version of the exit measure 
$Z_{\overline{D}_x}$ 
(see \eqref{def-exit-measure}) and a modified random variable $\zeta_x$ (see \eqref{def-modified-total-progeny}) associated with $\zeta$. We then derive an integral equation for the Laplace transform of $(\zeta_x, \langle 1, Z_{\overline{D}_x} \rangle)$. The convergence of the solution to this integral equation is established in Section \ref{S4.1}, while the proofs of the main results are presented in Sections \ref{S4.2}--\ref{S4.4}.

\section{Basic properties for high dimensional centered random walk}\label{S2}

Let $\mathbf{S}_n$  be an $\mathbb{R}^d$-valued random walk under $\mathbb{P}_{\mathbf{y}}$  
such that $\mathbf{S}_0:= \mathbf{y}$ and $(\mathbf{S}_n- \mathbf{S}_{n-1}, n\geq 1)$ are i.i.d. copies of  $\mathbf{X}$. 
Define 
\begin{equation}
	\tau_x:= \inf\big\{n\geq 0: \mathbf{S}_n \notin \overline{D}_x \big\} =  \inf\left\{n\geq 0: \| \mathbf{S}_n\|>x  \right\} .
\end{equation}
According to the independence of $\mathbf{S}_n- \mathbf{S}_{n-1}$, it is easy to see that for any $\mathbf{y}\in \mathbb{R}^d$,
\begin{equation}\label{lemma-martingale}
	\left\{ \| \mathbf{S}_n \|^2- \eta^2 n, n\geq 1 \right\}\quad \mbox{is a } \mathbb{P}_{\mathbf{y}}-\mbox{martingale}, 
\end{equation}
where $\eta^2= \mathbb{E}(\| \mathbf{X}\|^2).$ For simplicity, we write $\mathbb{P}:=\mathbb{P}_{\mathbf{0}}$ and $\mathbb{E}:=\mathbb{E}_{\mathbf{0}}$. Taking $\{x_k\}_{k\geq 0}\subset [0,\infty)$ satisfying $\lim_{k\to\infty}x_k=\infty$, define the scaled process
\begin{align}
	\mathbf{S}_t^{(k)}:=x_k^{-1}\mathbf{S}_{\lfloor t x_k^2\rfloor}, \quad t\ge 0.
\end{align}
By Donsker's invariance principle, under $\mathbb{P}_{x_k\mathbf{y}}$, the process $\mathbf{S}^{(k)}$ converges in distribution in the Skorokhod space  $D\left([0,\infty),\mathbb{R}^d\right)$ to a Brownian motion $W=\{W_t\}_{t\ge 0}$ starting from $\mathbf{y}$ with covariance matrix $\Sigma$. Moreover, as $k\to \infty$, the following  joint convergence holds 
in distribution on the product space $D\left([0,\infty),\mathbb{R}^d\right) \times \mathbb{R}_+$: 
\begin{align}\label{joint-convergence}
	\left(\mathbf{S}^{(k)}, x_k^{-2}\tau_{x_k}\right) \xLongrightarrow{k\to\infty} \left(W,\tau_{1}^W\right)
\end{align}
where $\tau_{1}^W:=\inf\{t>0: W_t \notin \overline{D}_1\}$.
\begin{lemma}\label{lemma-step1}
	There exists $\kappa_0>0$ such that 
	\begin{equation}
		\sup_{x>1} \sup_{\mathbf{y}\in \overline{D}_1} \mathbb{E}_{x\mathbf{y}}\Big(e^{\kappa_0 x^{-2}\tau_x}\Big)<\infty.
	\end{equation}
	Consequently, we have
	\begin{equation}\label{step1}
		C_\tau:= \sup_{x>1}\sup_{\mathbf{y}\in \overline{D}_1}
		x^{-2}
		\mathbb{E}_{x\mathbf{y}}\left( \tau_{x}\right)<\infty\quad \mbox{and}\quad \lim_{M\to\infty} \sup_{x> 1}\sup_{\mathbf{y}\in \overline{D}_1}
		x^{-2}
		\mathbb{E}_{x \mathbf{y}} \big( \tau_{x} 1_{\{\tau_{x} > M x^2\}} \big)=0. 
	\end{equation}	
\end{lemma}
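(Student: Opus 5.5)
The plan is to establish a uniform one-block escape estimate and then iterate it using the Markov property. The key claim is that there exist $m>0$ and $\delta\in(0,1)$ such that
\begin{equation}
\sup_{x>x_0}\sup_{\mathbf{y}\in\overline{D}_1}\mathbb{P}_{x\mathbf{y}}\big(\tau_x> m x^2\big)\le \delta
\end{equation}
for some $x_0\ge1$. We separately handle $x\in(1,x_0]$.

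\textbf{Step 1 (one-block bound).} I would reduce to a single coordinate. Since $\eta^2>0$, some unit vector $\mathbf{e}$ has $s^2:=\mathbb{E}\langle \mathbf{e},\mathbf{X}\rangle^2>0$; for instance take an eigenvector of $\Sigma$ with positive eigenvalue. The event $\{\tau_x>mx^2\}$ is contained in the event $\{|\langle\mathbf{e},\mathbf{S}_n\rangle|\le x \text{ for all } n\le mx^2\}$.

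To bound this, I would not use Donsker, since uniformity in the start point is awkward there. Instead I would use the one-dimensional martingale $\langle\mathbf{e},\mathbf{S}_n\rangle^2-s^2n$, in the same way as \eqref{lemma-martingale}. Let $\sigma$ be the exit time of $\langle\mathbf{e},\mathbf{S}_n\rangle$ from $[-x,x]$. Optional stopping at $\sigma\wedge n$ gives
\begin{equation}
s^2\,\mathbb{E}(\sigma\wedge n)\le \mathbb{E}\langle\mathbf{e},\mathbf{S}_{\sigma\wedge n}\rangle^2 .
\end{equation}
The overshoot is controlled by $\mathbb{E}\langle\mathbf{e},\mathbf{S}_{\sigma\wedge n}\rangle^2\le 2x^2+2\mathbb{E}\max_{k\le \sigma\wedge n}\|\mathbf{X}_k\|^2$. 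The last term can be bounded by $2\,\mathbb{E}\sum_{k\le\sigma\wedge n}\|\mathbf{X}_k\|^2 1_{\{\|\mathbf{X}_k\|>x\}}+2x^2$. By Wald's identity this is at most
\begin{equation}
2\,\mathbb{E}(\sigma\wedge n)\,\mathbb{E}\big(\|\mathbf{X}\|^21_{\{\|\mathbf{X}\|>x\}}\big)+2x^2 .
\end{equation}
For $x\ge x_0$ large, the truncated second moment is below $s^2/4$, so that term can be absorbed into the left-hand side. Letting $n\to\infty$ then gives
\begin{equation}
\sup_{\mathbf{y}\in\overline D_1}\mathbb{E}_{x\mathbf{y}}\sigma\le C x^2 .
\end{equation}
By Markov's inequality, $\mathbb{P}(\tau_x>mx^2)\le C/m\le 1/2$ once $m=2C$.

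\textbf{Step 2 (iteration).} At the times $jmx^2$, if $\tau_x>jmx^2$ then the walk is still in $\overline{D}_x$. The Markov property and Step 1 then give
\begin{equation}
\sup_{\mathbf{y}}\mathbb{P}_{x\mathbf{y}}\big(\tau_x>jmx^2\big)\le \delta^j .
\end{equation}
So $x^{-2}\tau_x$ has a geometric tail uniformly in $x$ and $\mathbf{y}$. Choosing $\kappa_0$ with $e^{2\kappa_0 m}\delta<1$ yields a uniformly bounded exponential moment for $x\ge x_0$.

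For $1<x<x_0$ I would apply the same argument at scale $x_0$, in the form $\mathbb{P}(\tau_x>n)\le \mathbb{P}(\tau_{x_0}'>n)$. Here the walk started in $\overline D_x\subset\overline D_{x_0}$ must exit $\overline D_{x_0}$ later than it exits $\overline D_x$. This gives
\begin{equation}
\mathbb{E}e^{\kappa_0 x^{-2}\tau_x}\le \mathbb{E}e^{\kappa_0\tau_{x_0}}\le \mathbb{E}e^{\kappa_0 x_0^2\, x_0^{-2}\tau_{x_0}} ,
\end{equation}
which is finite after shrinking $\kappa_0$ by the factor $x_0^{-2}$.

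\textbf{Step 3 (consequences).} The bounds \eqref{step1} follow directly from the exponential moment. We have $x^{-2}\tau_x\le \kappa_0^{-1}e^{\kappa_0x^{-2}\tau_x}$. Also,
\begin{equation}
x^{-2}\tau_x1_{\{\tau_x>Mx^2\}}\le \kappa_0^{-1}e^{-\kappa_0 M/2}e^{\kappa_0x^{-2}\tau_x}
\end{equation}
after replacing $\kappa_0$ by $\kappa_0/2$ in the exponential bound. This quantity tends to $0$ uniformly.

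The main obstacle is the uniform overshoot control in Step 1. With only a second moment, handling it needs the absorption trick, which requires $x$ large; this is why small $x$ is treated separately.
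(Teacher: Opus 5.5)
Your proof is correct, but you obtain the one-block estimate quite differently from the paper. The paper works with block increments rather than positions. By the CLT it fixes $n_0$ with $\sup_{x>1}\mathbb{P}(\|\mathbf{S}_{\lfloor n_0x^2\rfloor}\|\le 2x)\le 1/2$. It then notes that on $\{\tau_x>p\lfloor n_0x^2\rfloor\}$ each of the $p$ increments $\mathbf{S}_{j\lfloor n_0x^2\rfloor}-\mathbf{S}_{(j-1)\lfloor n_0x^2\rfloor}$ has norm at most $2x$. Independence of these increments gives $\mathbb{P}_{x\mathbf{y}}(\tau_x>p\lfloor n_0x^2\rfloor)\le 2^{-p}$ in one stroke. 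Because increments do not see the starting point, the uniformity problem you raise about Donsker never arises. No Markov-property iteration is needed, and all $x>1$ are treated at once.

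Your route replaces the distributional limit by an $L^2$ argument. You use optional stopping for $\langle\mathbf{e},\mathbf{S}_n\rangle^2-s^2n$, control the overshoot via $\max_{k\le\sigma\wedge n}\|\mathbf{X}_k\|^2\le x^2+\sum_{k\le\sigma\wedge n}\|\mathbf{X}_k\|^21_{\{\|\mathbf{X}_k\|>x\}}$ and Wald's identity, and then absorb the truncated moment. This is the same optional-stopping mechanism the paper uses for \eqref{important-eq} in Lemma \ref{lemma-step2}. It yields the explicit, start-uniform bound $\mathbb{E}_{x\mathbf{y}}\tau_x\le\mathbb{E}_{x\mathbf{y}}\sigma\le 8x^2/s^2$ for $x\ge x_0$, which already gives $C_\tau<\infty$ there without exponential moments.

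The cost is the split at $x_0$. Your comparison $\tau_x\le\tau_{x_0}$, with $\kappa_0$ replaced by $\kappa_0x_0^{-2}$, handles it correctly. You could even avoid the split by truncating at level $Ax$ instead of $x$. For $x>1$ one has $\mathbb{E}(\|\mathbf{X}\|^21_{\{\|\mathbf{X}\|>Ax\}})\le\mathbb{E}(\|\mathbf{X}\|^21_{\{\|\mathbf{X}\|>A\}})$, so a single large $A$ makes the absorption work for every $x>1$.

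The remaining loose ends are routine. Use $\lceil mx^2\rceil$ as the block length, and in Step 3 the precise bound is $u1_{\{u>M\}}\le\frac{2}{\kappa_0}e^{-\kappa_0M/2}e^{\kappa_0u}$.
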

\begin{proof}
	According to the central limit theorem, there exists a large $n_0$ such that 
	$$\sup_{x> 1} \mathbb{P}\big(\|\mathbf{S}_{\lfloor n_0 x^2\rfloor}\|\leq 2 x\big)\leq 2^{-1}.$$ Therefore, for each $p\in\mathbb{N}$ and $\mathbf{y}\in \overline{D}_1$, we have 
	\begin{equation}
		\begin{aligned}
			& \mathbb{P}_{x \mathbf{y}} \big(\tau_{x} > p \lfloor n_0 x^2 \rfloor\big) \leq 	\mathbb{P}_{x \mathbf{y}} \Big( \max_{1\leq j\leq p} \|\mathbf{S}_{j \lfloor n_0 x^2 \rfloor }\| \leq x \Big)  \\
			& \leq  \mathbb{P}_{x\mathbf{y}} \Big( \max_{1\leq j\leq p} \|\mathbf{S}_{j \lfloor n_0 x^2 \rfloor } - \mathbf{S}_{(j-1) \lfloor n_0 x^2 \rfloor } \| \leq 2x \Big) 
			=  \mathbb{P}\big( \| \mathbf{S}_{ \lfloor n_0 x^2 \rfloor } \| \leq 2x \big)^p \leq 2^{-p}. 
		\end{aligned}
	\end{equation}
	Now for any $z>0$, there exists $p\in \mathbb{N}$ such that $p \lfloor n_0 x^2 \rfloor  \leq z x^2 < (p+1)\lfloor n_0 x^2 \rfloor \leq (p+1)n_0 x^2$ so that 
	$$\sup_{x>1}	\sup_{\mathbf{y} \in \overline{D}_1} \mathbb{P}_{x\mathbf{y}}\big(\tau_{x} > z x^2 \big)\leq 2^{-p}\leq 2^{1- \frac{z}{n_0}}.
	$$
	Noticing that $n_0$ is independent of $x$ and $\mathbf{y}$, for any $\kappa_0 < \frac{1}{n_0} \log 2$,
	\begin{equation}
		\begin{aligned}
			\sup_{x>1}	\sup_{\mathbf{y} \in \overline{D}_1} \mathbb{E}_{x\mathbf{y}}\Big(e^{\kappa_0 x^{-2}\tau_x}\Big) & = 1+  \kappa_0 \sup_{x>1}	\sup_{\mathbf{y} \in \overline{D}_1} \int_0^\infty e^{\kappa_0 z} \mathbb{P}_{x\mathbf{y}}\big(\tau_{x} > z x^2 \big)  \mathrm{d}z\\
			&\leq 1+  \kappa_0  \int_0^\infty e^{\kappa_0 z} 2^{1- \frac{z}{n_0}} \mathrm{d}z <\infty,
		\end{aligned}
	\end{equation}
	which implies the first result. The second result is a direct consequence of the first result and we omit the details here. 
	
\end{proof}

\begin{lemma}\label{Tightness} 
	For any $A>0$, it holds that 
	\begin{equation}
		\begin{aligned}
			& \sup_{x>1} \sup_{\mathbf{y}\in \overline{D}_1} \frac{1}{x^2}\mathbb{P}_{x\mathbf{y}}\left(\|\mathbf{S}_{\tau_{x}}\|>x+A\right)
			\le 4C_\tau \mathbb{P}( \|\mathbf{X}\|>A ). 
		\end{aligned}
	\end{equation}
\end{lemma}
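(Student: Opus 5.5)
The overshoot event can only happen at the exit step, and the exit step must be a large jump taken from inside the ball. Since $\|\mathbf{S}_{\tau_x-1}\|\le x$, the triangle inequality gives
\[
\{\|\mathbf{S}_{\tau_x}\|>x+A\}\subset\{\|\mathbf{S}_{\tau_x}-\mathbf{S}_{\tau_x-1}\|>A\}.
\]
(If $\mathbf{y}\in\overline{D}_1$ then $\tau_x\ge1$, so $\mathbf{S}_{\tau_x-1}$ is defined.) We decompose according to the value of $\tau_x$:
\[
\mathbb{P}_{x\mathbf{y}}\big(\|\mathbf{S}_{\tau_x}\|>x+A\big)\le \sum_{n\ge1}\mathbb{P}_{x\mathbf{y}}\big(\tau_x>n-1,\ \|\mathbf{S}_n-\mathbf{S}_{n-1}\|>A\big).
\]
The event $\{\tau_x>n-1\}$ is measurable with respect to $\mathbf{S}_0,\dots,\mathbf{S}_{n-1}$, while the increment $\mathbf{S}_n-\mathbf{S}_{n-1}$ is independent of that $\sigma$-field and distributed as $\mathbf{X}$. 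Each summand therefore factorises:
\[
\mathbb{P}_{x\mathbf{y}}(\tau_x>n-1)\,\mathbb{P}(\|\mathbf{X}\|>A).
\]

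Summing over $n$ gives $\sum_{n\ge1}\mathbb{P}_{x\mathbf{y}}(\tau_x\ge n)=\mathbb{E}_{x\mathbf{y}}(\tau_x)$. We then invoke \eqref{step1} from Lemma \ref{lemma-step1}, which bounds this by $C_\tau x^2$ uniformly in $x>1$ and $\mathbf{y}\in\overline{D}_1$. Dividing by $x^2$ yields the bound $C_\tau\,\mathbb{P}(\|\mathbf{X}\|>A)$, which is stronger than the stated constant $4C_\tau$. The factor $4$ in the statement presumably absorbs some slack in the authors' route, for instance a rounding such as $\lfloor\cdot\rfloor$ or a bound $\mathbb{E}(\tau_x)+1\le 2\mathbb{E}(\tau_x)$. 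In my version nothing extra is needed, since $\tau_x\ge1$ already makes the sum exactly $\mathbb{E}_{x\mathbf{y}}(\tau_x)$.

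The only step needing care is the independence used in the factorisation, i.e. that $\{\tau_x\ge n\}$ is determined by the path up to time $n-1$. This is a standard stopping-time fact, and the argument is essentially a Wald-type identity.
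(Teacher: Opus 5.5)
Your proof is correct and is essentially the paper's argument. Both decompose on $\{\tau_x=k\}$, note that the exit jump must exceed $A$, factor using independence of the last increment from $\mathcal{F}_{k-1}$, and sum to an expected exit time bounded by Lemma \ref{lemma-step1}. The only difference is the source of the factor $4$, and it is not rounding as you guessed. The paper first recentres the walk at the origin and enlarges the ball to radius $x(1+\|\mathbf{y}\|)$, which gives $\mathbb{E}(\tau_{x(1+\|\mathbf{y}\|)})\le C_\tau x^2(1+\|\mathbf{y}\|)^2\le 4C_\tau x^2$. You instead use the uniformity in $\mathbf{y}\in\overline{D}_1$ of \eqref{step1} directly, and so obtain the sharper constant $C_\tau$.
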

\begin{proof}
	Since $\tau_x\geq 1$ holds $\mathbb{P}_{x\mathbf{y}}$-almost surely for all $\mathbf{y}\in \overline{D}_1$, we have
	\begin{align}\label{upper-bound-tail-probab}
		&\mathbb{P}_{x\mathbf{y}}\left(\|\mathbf{S}_{\tau_{x}}\|>x+A\right)
		=\sum_{k=1}^{\infty}\mathbb{P}_{x\mathbf{y}} \left(\tau_{x}=k,\|\mathbf{S}_{k}\|>x+A\right) \nonumber \\
		&=\sum_{k=1}^{\infty}\mathbb{P}_{x\mathbf{y}} \Big(\max_{0\leq j\leq k-1} \|\mathbf{S}_{j}\|\leq x,  \|\mathbf{S}_{k}\|>x+A\Big) \nonumber\\
		&\le \sum_{k=1}^{\infty}\mathbb{P}\Big( \max_{0\leq j\leq k-1} \|\mathbf{S}_{j}\|\leq x(1+\|\mathbf{y}\|),  \| \mathbf{S}_k - \mathbf{S}_{k-1}\| >A\Big)\nonumber\\
		& =\mathbb{P}( \|\mathbf{X}\|>A)\sum_{k=1}^{\infty}\mathbb{P}\left(\tau_{x(1+\|\mathbf{y}\|)} \ge k\right)
		=\mathbb{P}( \|\mathbf{X}\|>A )\mathbb{E}(\tau_{x(1+\|\mathbf{y}\|)}).
	\end{align}	
	Combining Lemma \ref{lemma-step1}, \eqref{upper-bound-tail-probab} and the fact that $\mathbf{y}\in \overline{D}_1$, we conclude that for any $x>1$, $A>0$ and $\mathbf{y}\in \overline{D}_1$, 
	\begin{equation}
		\mathbb{P}_{x\mathbf{y}}\left(\|\mathbf{S}_{\tau_{x}}\|>x+A\right)\leq C_\tau x^2 (1+\|\mathbf{y}\|)^2 \mathbb{P}( \|\mathbf{X}\|>A )\leq 4C_\tau x^2  \mathbb{P}( \|\mathbf{X}\|>A ),
	\end{equation}
	which implies the desired result. 
\end{proof}

For any $\mathbf{z}  \in \overline{D}_1$  and $x>1$, define 
\begin{equation}\label{Def-tau-x-z}
	\tau_{x}^{\mathbf{z}}:=\inf\{n\geq 0 : \mathbf{S}_n+x\mathbf{z} \notin \overline{D}_x\}.
\end{equation}

\begin{lemma}\label{lemma-step2}
	For each $\delta\in (0,1)$, it holds that 
	\begin{equation}
		\limsup_{x\to +\infty}\sup_{\mathbf{z} \in \overline{D}_\delta}\sup_{\mathbf{y},\mathbf{y}+\mathbf{z}\in  \overline{D}_1}\frac{1}{x^2}  \mathbb{E}_{x\mathbf{y}}\left(|\tau_x-\tau_{x}^\mathbf{z} |\right) \leq  \frac{9\delta}{\eta^2}.
	\end{equation}
\end{lemma}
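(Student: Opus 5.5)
The plan is to compare both exit times with their minimum and to control each excess with the martingale \eqref{lemma-martingale}, using that the two balls have centers at distance at most $\delta x$. Under $\mathbb{P}_{x\mathbf{y}}$, $\tau_x$ is the exit time of $\mathbf{S}$ from $\overline{D}_x$. By \eqref{Def-tau-x-z}, $\tau_x^{\mathbf{z}}$ is the exit time of $\mathbf{S}$ from the ball $\overline{D}(-x\mathbf{z},x)$. The assumptions $\mathbf{y},\mathbf{y}+\mathbf{z}\in\overline{D}_1$ say exactly that the starting point $x\mathbf{y}$ lies in both balls. Put $\sigma:=\tau_x\wedge\tau_x^{\mathbf{z}}$. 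Then
\[
|\tau_x-\tau_x^{\mathbf{z}}| = (\tau_x-\sigma)+(\tau_x^{\mathbf{z}}-\sigma),
\]
and I bound the two terms separately and symmetrically.

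For the first term I use that $\|\mathbf{S}_n\|^2-\eta^2 n$ is a martingale. Lemma \ref{lemma-step1} gives exponential moments of $x^{-2}\tau_x$ uniformly in starting points in $\overline{D}_x$, and the shifted ball is handled by translation. So optional stopping at the bounded-integrable times $\sigma\le\tau_x$ is justified, and it yields
\[
\eta^2\,\mathbb{E}_{x\mathbf{y}}(\tau_x-\sigma)=\mathbb{E}_{x\mathbf{y}}\big(\|\mathbf{S}_{\tau_x}\|^2-\|\mathbf{S}_\sigma\|^2\big).
\]
On $\{\sigma=\tau_x\}$ the integrand vanishes. On $\{\sigma=\tau_x^{\mathbf{z}}<\tau_x\}$ we have $\|\mathbf{S}_\sigma+x\mathbf{z}\|>x$, hence $\|\mathbf{S}_\sigma\|\ge x(1-\delta)$. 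Therefore
\[
\eta^2\,\mathbb{E}_{x\mathbf{y}}(\tau_x-\sigma)\le \mathbb{E}_{x\mathbf{y}}\big(\|\mathbf{S}_{\tau_x}\|^2\big)-x^2(1-\delta)^2 .
\]
For the second term I do the same with the martingale $\|\mathbf{S}_n+x\mathbf{z}\|^2-\eta^2 n$. This gives the same bound with $\|\mathbf{S}_{\tau_x^{\mathbf{z}}}+x\mathbf{z}\|^2$ in place of $\|\mathbf{S}_{\tau_x}\|^2$, because on $\{\sigma=\tau_x<\tau_x^{\mathbf{z}}\}$ we have $\|\mathbf{S}_\sigma+x\mathbf{z}\|\ge\|\mathbf{S}_\sigma\|-\delta x\ge x(1-\delta)$.

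The remaining step, which I expect to be the main point, is to show that the overshoot is negligible at scale $x^2$:
\[
\mathbb{E}_{x\mathbf{y}}\|\mathbf{S}_{\tau_x}\|^2\le x^2+o(x^2)
\]
uniformly in $\mathbf{y}\in\overline{D}_1$, and likewise for the shifted ball. Since $\|\mathbf{S}_{\tau_x}\|\le x+\|\mathbf{S}_{\tau_x}-\mathbf{S}_{\tau_x-1}\|$, it suffices to bound the first and second moments of the last jump $\xi:=\mathbf{S}_{\tau_x}-\mathbf{S}_{\tau_x-1}$. I argue as in the proof of Lemma \ref{Tightness}: the event $\{\tau_x\ge k\}$ is independent of the $k$-th increment, so for any $A>0$,
\[
\mathbb{E}\big(\|\xi\|^2 1_{\{\|\xi\|>A\}}\big)\le \mathbb{E}(\tau_x)\,\mathbb{E}\big(\|\mathbf{X}\|^2 1_{\{\|\mathbf{X}\|>A\}}\big)\le C_\tau x^2\,\mathbb{E}\big(\|\mathbf{X}\|^2 1_{\{\|\mathbf{X}\|>A\}}\big).
\]
Hence $\mathbb{E}\|\xi\|^2\le A^2+C_\tau x^2\varepsilon_A$ with $\varepsilon_A\to0$ as $A\to\infty$, using $\eta^2<\infty$. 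Taking $A=A(x)\to\infty$ with $A=o(x)$ gives $\mathbb{E}\|\xi\|^2=o(x^2)$, and by Cauchy--Schwarz $2x\,\mathbb{E}\|\xi\|=o(x^2)$.

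Combining the two terms then gives
\[
\frac{\eta^2}{x^2}\,\mathbb{E}_{x\mathbf{y}}|\tau_x-\tau_x^{\mathbf{z}}|\le 2\big(1-(1-\delta)^2\big)+o(1)\le 4\delta+o(1),
\]
uniformly in the admissible $\mathbf{y}$ and $\mathbf{z}$. This is comfortably below the claimed $9\delta$. The only care needed is keeping the $o(1)$ uniform, which holds because $C_\tau$ in Lemma \ref{lemma-step1} is uniform and the shifted ball is a translate of $\overline{D}_x$ containing the starting point.
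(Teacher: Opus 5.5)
Your proof is correct, and it takes a somewhat different route from the paper's.

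\textbf{How the paper argues.} The paper does not split at $\sigma=\tau_x\wedge\tau_x^{\mathbf{z}}$. It observes that both $\tau_x$ and $\tau_x^{\mathbf{z}}$ lie between $\tau_{x(1-\|\mathbf{z}\|)}$ and $\tau_{x(1+\|\mathbf{z}\|)}$, the exit times of two balls centred at the origin. Hence $|\tau_x-\tau_x^{\mathbf{z}}|\le \tau_{x(1+\|\mathbf{z}\|)}-\tau_{x(1-\|\mathbf{z}\|)}$. It then applies the single identity $\eta^2\mathbb{E}_{x\mathbf{y}}(\tau_r)=\mathbb{E}_{x\mathbf{y}}\|\mathbf{S}_{\tau_r}\|^2-x^2\|\mathbf{y}\|^2$, which comes from the martingale $\|\mathbf{S}_n\|^2-\eta^2 n$. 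The overshoot is handled by integrating the tail bound of Lemma \ref{Tightness}. This gives $\mathbb{E}_{x\mathbf{y}}\|\mathbf{S}_{\tau_{x(1+\|\mathbf{z}\|)}}\|^2\le x^2(1+\|\mathbf{z}\|+\delta)^2+o(x^2)$, and the constant is $(1+2\delta)^2-(1-\delta)^2\le 9\delta$.

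\textbf{Comparison.} The sandwich buys simplicity: only concentric balls and one martingale appear, and Lemma \ref{Tightness} is reused verbatim. Your route treats the two non-concentric balls directly. You stop at the first exit and use the translated martingale $\|\mathbf{S}_n+x\mathbf{z}\|^2-\eta^2 n$ for the shifted ball. Your overshoot estimate is also sharper: $\mathbb{E}\|\mathbf{S}_{\tau_x}\|^2\le x^2+o(x^2)$, obtained from the Wald-type bound on the second moment of the last jump plus Cauchy--Schwarz. Together these give the better constant $4\delta$. Both arguments use only $\eta^2<\infty$ and the uniform constant $C_\tau$ from Lemma \ref{lemma-step1}.

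\textbf{Two small points of presentation.}
\begin{itemize}
\item Optional stopping: the times are integrable rather than bounded, and exponential moments are more than you need. It suffices that $\mathbb{E}\tau_x<\infty$ and that, before the relevant exit time, the walk stays in a ball of radius $x$ about that ball's centre. Then the increments of the quadratic martingale have uniformly bounded conditional expectation. Equivalently, you can invoke Wald's identities, or the truncation argument the paper uses to derive \eqref{important-eq}.
\item When you replace $\mathbb{E}\big[(\|\mathbf{S}_{\tau_x}\|^2-x^2(1-\delta)^2)1_{\{\sigma<\tau_x\}}\big]$ by the unrestricted expectation, say explicitly why this is allowed: $\|\mathbf{S}_{\tau_x}\|>x$, so the integrand is nonnegative on $\{\sigma=\tau_x\}$.
\end{itemize}
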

\begin{proof}
	Combining \eqref{lemma-martingale} and the optional stopping theorem, $\{\| \mathbf{S}_{n\land \tau_{x}}\|^2-\eta^2 (n\land \tau_{x})\}_{n\ge 1}$ is also a 
	$\mathbb{P}_{x\mathbf{y}}$-martingale, 
	which implies that 
	\begin{align}\label{e1}
		&\eta^2  \mathbb{E}_{x\mathbf{y}} \left(n\land \tau_{x}\right)= 	\mathbb{E}_{x\mathbf{y}}\big(\|\mathbf{S}_{n\land \tau_{x}}\|^2\big) 
		-x^2 \|\mathbf{y}\|^2 \nonumber \\
		=& \mathbb{E}_{x\mathbf{y}}\big(\|\mathbf{S}_{\tau_{x}}\|^2 
		1_{\{\tau_{x}\leq n\}}\big) 
		+\mathbb{E}_{x\mathbf{y}}\big(\|\mathbf{S}_{n}\|^2 
		1_{\{\tau_{x} > n\}}\big)
		-x^2 \|\mathbf{y}\|^2.
	\end{align}
	Noticing that the second term is bounded by 
	$x^2 \mathbb{P}_{x\mathbf{y}}(\tau_{x} > n)$,  which tends to $0$ as $n\to \infty$.
	Therefore, letting $n\to\infty$ in \eqref{e1}, we deduce that 
	\begin{equation}\label{important-eq}
		\mathbb{E}_{x\mathbf{y}}\big(\|\mathbf{S}_{\tau_{x}}\|^2 \big)
		-x^2 \|\mathbf{y}\|^2
		=\eta^2 \mathbb{E}_{x\mathbf{y}} \left(\tau_{x}\right).
	\end{equation}
	Since $\{\|\mathbf{S}_n+x\mathbf{z}\|>x\}\subset \{\|\mathbf{S}_n\|>x(1-\|\mathbf{z}\|)\}$ and  $\{\|\mathbf{S}_n\|>x(1+\|\mathbf{z} \|)\} \subset \{\|\mathbf{S}_n+x\mathbf{z}\|>x\}$,  we obatin
	\begin{equation}
		\tau_{x(1-\|\mathbf{z}\|)} \le \tau_{x}^\mathbf{z}\quad \text{and}  \quad \tau_x  \le \tau_{x(1+\|\mathbf{z}\|)}.
	\end{equation}
	Therefore, it follows from  \eqref{important-eq} that 
	\begin{align}\label{e3}
		& \eta^2 \mathbb{E}_{x\mathbf{y}}\left(|\tau_x-\tau_{x}^\mathbf{z}|\right)
		\le \eta^2 \mathbb{E}_{x\mathbf{y}}\left(\big|\tau_{x(1+\|\mathbf{z}\|)}- \tau_{x(1-\|\mathbf{z}\|)}\big|\right)\nonumber \\
		= 
		&\ \mathbb{E}_{x\mathbf{y}}\big(\|\mathbf{S}_{\tau_{x(1+\|\mathbf{z} \|)}}\|^2-\|\mathbf{S}_{\tau_{x(1-\|\mathbf{z}\|)}}\|^2\big) 
		\le 
		\mathbb{E}_{x\mathbf{y}}\big(\|\mathbf{S}_{\tau_{x(1+\|\mathbf{z}\|)}}\|^2\big) -x^2(1-\|\mathbf{z}\|)^2. 
	\end{align}
	According to Lemma \ref{Tightness} (with $x$ replaced by $x(1+\| \mathbf{z}\|)$ and $\mathbf{y}$ replaced by $(1+\| \mathbf{z}\|)^{-1}\mathbf{y}$) , we see that for any $A>0$, 
	\begin{equation}
		\begin{aligned}  
			\mathbb{P}_{x\mathbf{y}}\big(
			\|\mathbf{S}_{\tau_{x(1+\|\mathbf{z}\|)}}\|
			>x(1+\|\mathbf{z} \|)+A\big)
			\le 4C_\tau x^2 (1+ \|\mathbf{z}\|)^2 \mathbb{P}\left(\|\mathbf{X}\|>A\right)
			\leq 16C_\tau x^2 \mathbb{P}\left(\|\mathbf{X}\|>A\right). 
		\end{aligned}
	\end{equation}
	Therefore, it follows from the above inequality that
	\begin{equation}
		\begin{aligned}    
			&\mathbb{E}_{x\mathbf{y}}\big(\|
			\mathbf{S}_{\tau_{x(1+\|\mathbf{z}\|)}}
			\|^2\big)=2\int_{0}^{\infty}u \mathbb{P}_{x\mathbf{y}}\big(\|
			\mathbf{S}_{\tau_{x(1+\|\mathbf{z}\|)}}
			\|>u\big)\mathrm{d}u\\
			& \le 2\int_{0}^{ x(1+\|\mathbf{z} \|+\delta )}u\mathrm{d}u+ 2\int_{x\delta}^{\infty} (A+ x(1+\|\mathbf{z}\|)) \mathbb{P}_{x\mathbf{y}}\big(\|
			\mathbf{S}_{\tau_{x(1+\|\mathbf{z}\|)}}
			\|>x(1+\|\mathbf{z}\|)+ A\big)\mathrm{d}A \\
			& \leq  x^2(1+\|\mathbf{z}\|+\delta)^2 +32 C_\tau x^2 \int_{x\delta}^{\infty}\left(A+x(1+\|\mathbf{z}\|)\right) \mathbb{P}\left(\|\mathbf{X}\|>A\right)\mathrm{d}A. 
		\end{aligned}
	\end{equation}
	Moreover, according to the inequality
	$A+ x(1+\|\mathbf{z}\|) \le A+ x(1+\delta) \le  \left(1+\frac{1+\delta}{\delta}\right)A$ for all $A \geq x \delta$, 
	we deduce that 
	\begin{equation}\label{e4}
		\begin{aligned}
			\mathbb{E}_{x\mathbf{y}}\big(\|
			\mathbf{S}_{\tau_{x(1+\|\mathbf{z}\|)}}
			\|^2\big) 
			\le x^2(1+\|\mathbf{z}\|+\delta)^2 +32 C_\tau \Big(1+\frac{1+\delta}{\delta}\Big) x^2  \int_{x\delta}^{\infty}A \mathbb{P}\left(\|\mathbf{X}\|>A\right)\mathrm{d}A. 
		\end{aligned}
	\end{equation}
	Combining \eqref{e3} and \eqref{e4}, we conclude that 
	\begin{equation}
		\begin{aligned}
			& \limsup_{x\to\infty} \sup_{ \mathbf{z} \in \overline{D}_\delta} \sup_{\mathbf{y}, \mathbf{y}+\mathbf{z}\in \overline{D}_1} \frac{\eta^2}{x^2}  \mathbb{E}_{x\mathbf{y}}\left(|\tau_x-\tau_{x}^\mathbf{z}|\right) 
			\leq \sup_{ \mathbf{z} \in \overline{D}_\delta} \big((1+\|\mathbf{z}\|+\delta)^2-(1-\|\mathbf{z}\|)^2 \big) \leq 9\delta,
		\end{aligned}
	\end{equation}
	as desired. 
\end{proof}

\section{An integral equation for branching particle systems}\label{S3}
Let $$\mathcal{T}_{\overline{D}_x}:=\big\{u\in \mathbb{T}: V(u)\in \overline{D}_x^c; V(u_j)\in \overline{D}_x,~\forall~1\le j<|u|\big\}$$
be the set of particles in the tree $\mathbb{T}$ that exit $\overline{D}_x$ for the first time.
For any $x >0$, denote by 
\begin{align}\label{def-exit-measure}
	Z_{\overline{D}_x}:=\sum_{w\in \mathcal{T}_{\overline{D}_x}}\delta_{V(w)}
\end{align}
the exit measure, which describes the positions at which the branching random walk first exits $\overline{D}_x$.
Note that under the condition $\sum_{k=1}^{\infty}kp_k=1$, the process $\{Z_n\}_{n\ge 0}$ becomes extinct almost surely in finite time, and hence $Z_{\overline{D}_x}$ is well defined. 
Next, we focus on the asymptotic behaviour of $\zeta$.
Define 
\begin{align}\label{def-modified-total-progeny}
	\zeta_x:=\#\{u \in 
	\mathbb{T}\setminus  \mathcal{T}_{\overline{D}_x}: \forall 	v\prec u, v\notin \mathcal{T}_{\overline{D}_x}   \}.
\end{align}
For any $x>0$ and $\gamma, \theta \geq 0$, define for all $\mathbf{y}\in \overline{D}_x$,
\begin{align}\label{def-v}
	u_{(\gamma, \theta)}(\mathbf{y};x):=1-\mathbb{E}_{\delta_{\mathbf{y}}}\Big(e^{-\gamma \zeta_x-\theta \langle 1, Z_{\overline{D}_x}\rangle} \Big)\quad \text{and} \quad v_{(\gamma, \theta)}(\mathbf{y};x):= \mathbb{E}_{\delta_{\mathbf{y}}}\Big(e^{-\gamma \zeta_x-\theta \langle 1, Z_{\overline{D}_x}\rangle} \Big),
\end{align}
then $	u_{(\gamma, \theta)}(\mathbf{y};x)= 1-	v_{(\gamma, \theta)}(\mathbf{y};x).$ Define 
\begin{align}\label{def-phi}
	\phi(u):=\sum_{k=1}^{\infty}p_k (1-u)^k-(1-p_0)+u,\quad u\in [0,1].
\end{align}

\begin{lemma}\label{expression-small-u}
	For any $ \gamma, \theta \ge 0,\ x>0 $ and $\mathbf{y}\in \overline{D}_x$, 
	\begin{align}
		u_{(\gamma, \theta)}(\mathbf{y};x)
		&=1-\mathbb{E}_{\mathbf{y}}\left(e^{-\gamma \tau_x}\right)
		+\sum_{k=1}^{\infty}p_k (1-e^{-\theta k}) \mathbb{E}_{\mathbf{y}}\left(e^{-\gamma \tau_x}\right)\\
		&\quad  -\sum_{i=1}^{\infty}e^{-\gamma i}\mathbb{E}_{\mathbf{y}}\left(1_{\{\tau_x>i\}}\phi \left(u_{(\gamma,\theta)}(\mathbf{S}_i;x)\right)\right).
	\end{align}
\end{lemma}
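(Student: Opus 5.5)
The plan is a first-step (renewal) decomposition at the root, iterated along the random-walk path of a single lineage up to its exit time $\tau_x$. Everything reduces to one algebraic identity for the generating function $f(s):=\sum_{k\ge 0}p_ks^k$. The definition \eqref{def-phi} of $\phi$ gives, for $u\in[0,1]$,
\begin{equation}
f(1-u)=p_0+\sum_{k\ge1}p_k(1-u)^k = p_0+\phi(u)+(1-p_0)-u = (1-u)+\phi(u).
\end{equation}
Hence $f(v)=v+\phi(u)$ whenever $v=1-u$. Also $1-f(e^{-\theta})=\sum_{k\ge1}p_k(1-e^{-\theta k})$, since the $k=0$ term vanishes.

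\textbf{Step 1 (one-step equation).} Start from $\delta_{\mathbf y}$ with $\mathbf y\in\overline D_x$. The root is not in $\mathcal T_{\overline D_x}$, so it contributes one unit to $\zeta_x$, producing the factor $e^{-\gamma}$. By the model $\mathbf x+N\delta_{\mathbf X}$, its $N$ children all sit at $\mathbf y+\mathbf X=\mathbf S_1$, with $N$ independent of $\mathbf X$.
\begin{itemize}
\item[(a)] If $\|\mathbf S_1\|>x$, i.e. $\tau_x=1$, the $N$ children all belong to $\mathcal T_{\overline D_x}$. They contribute $e^{-\theta N}$ and nothing to $\zeta_x$, and their descendants are not counted.
\item[(b)] If $\mathbf S_1\in\overline D_x$, the children start independent copies of the system from $\mathbf S_1$. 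By the branching property this gives $v(\mathbf S_1)^N$, writing $v=v_{(\gamma,\theta)}(\cdot;x)$ and $u=u_{(\gamma,\theta)}(\cdot;x)$.
\end{itemize}
Conditioning on $\mathbf S_1$ and averaging over $N$ yields
\begin{equation}
v(\mathbf y)=e^{-\gamma}\,\mathbb E_{\mathbf y}\Big(1_{\{\tau_x=1\}}f(e^{-\theta})+1_{\{\tau_x>1\}}\big(v(\mathbf S_1)+\phi(u(\mathbf S_1))\big)\Big).
\end{equation}

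\textbf{Step 2 (iteration).} Substitute the same identity for $v(\mathbf S_1)$ on $\{\tau_x>1\}$ and use the Markov property of $\mathbf S$. After $n$ steps this gives
\begin{equation}
v(\mathbf y)=f(e^{-\theta})\,\mathbb E_{\mathbf y}\big(e^{-\gamma\tau_x}1_{\{\tau_x\le n\}}\big)+\sum_{i=1}^{n}e^{-\gamma i}\mathbb E_{\mathbf y}\big(1_{\{\tau_x>i\}}\phi(u(\mathbf S_i))\big)+e^{-\gamma n}\mathbb E_{\mathbf y}\big(1_{\{\tau_x>n\}}v(\mathbf S_n)\big).
\end{equation}
Since $0\le v\le1$, the remainder is at most $\mathbb P_{\mathbf y}(\tau_x>n)$. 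This tends to $0$ because $\tau_x<\infty$ a.s.; Lemma \ref{lemma-step1} gives exponential moments, uniformly in $\mathbf y\in\overline D_x$. The $\phi$-series converges absolutely because $0\le\phi(u)\le u\le1$ (convexity of $f$) and $\sum_i\mathbb P_{\mathbf y}(\tau_x>i)\le\mathbb E_{\mathbf y}\tau_x<\infty$. Letting $n\to\infty$ gives
\begin{equation}
v(\mathbf y)=f(e^{-\theta})\,\mathbb E_{\mathbf y}(e^{-\gamma\tau_x})+\sum_{i\ge1}e^{-\gamma i}\mathbb E_{\mathbf y}\big(1_{\{\tau_x>i\}}\phi(u(\mathbf S_i))\big).
\end{equation}

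\textbf{Step 3 (conclusion).} Take $u=1-v$ and write $f(e^{-\theta})=1-\sum_{k\ge1}p_k(1-e^{-\theta k})$. This yields exactly
\begin{equation}
u(\mathbf y)=1-\mathbb E_{\mathbf y}(e^{-\gamma\tau_x})+\sum_{k\ge1}p_k(1-e^{-\theta k})\,\mathbb E_{\mathbf y}(e^{-\gamma\tau_x})-\sum_{i\ge1}e^{-\gamma i}\mathbb E_{\mathbf y}\big(1_{\{\tau_x>i\}}\phi(u(\mathbf S_i))\big),
\end{equation}
with the minus sign in front of the $\phi$-sum, as stated.

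\textbf{Main obstacle.} The step needing the most care is Step 1: one must correctly account for which particles enter $\zeta_x$ and $\langle 1,Z_{\overline D_x}\rangle$, using the tree definitions \eqref{def-exit-measure} and \eqref{def-modified-total-progeny}. The root and every non-exited ancestor each count once in $\zeta_x$. The first exiters are counted only in $Z_{\overline D_x}$, and their descendants are discarded. Together with the convention $f(1-u)=1-u+\phi(u)$ forced by \eqref{def-phi}, this bookkeeping determines the sign. The limit $n\to\infty$ is routine given Lemma \ref{lemma-step1}.
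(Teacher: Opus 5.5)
Your proof is correct and is essentially the paper's argument: a first-step decomposition at the root, using as the paper does that all $N$ children sit at the common point $\mathbf{S}_1$, iterated along the walk up to $\tau_x$ and passed to the limit via $\mathbb{P}_{\mathbf{y}}(\tau_x>N)\to 0$. The only difference is cosmetic: you iterate the equation for $v$ using $f(1-u)=1-u+\phi(u)$ and convert to $u$ at the end, whereas the paper iterates in $u$ and so must implicitly sum $(1-e^{-\gamma})\sum_{i\ge 0}e^{-\gamma i}\mathbb{P}_{\mathbf{y}}(\tau_x>i)=1-\mathbb{E}_{\mathbf{y}}(e^{-\gamma\tau_x})$; also, for $x\le 1$ you only need $\tau_x<\infty$ a.s., not Lemma \ref{lemma-step1}, which is stated for $x>1$.
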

\begin{proof}
	Define  $\zeta_x^w:=\#\{u \in \mathbb{T}:  u\notin \mathcal{T}_{\overline{D}_x}, 
	\forall v\prec u,  v\notin \mathcal{T}_{\overline{D}_x}, 
	w\prec u\}.$
	By the branching property, we have $\zeta_x=1+\sum_{|w|=1, w\notin \mathcal{T}_{\overline{D}_x}}\zeta_x^w.$ 
	Therefore, by comparing the norm of the first jump with $x$, we get 
	\begin{align}
		&v_{(\gamma, \theta)}(\mathbf{y};x)
		=e^{-\gamma}\mathbb{E}_{\delta_{\mathbf{y}}}\Big(e^{-\gamma \sum_{|w|=1, w\notin \mathcal{T}_{\overline{D}_x}}\zeta_x^w-\theta \langle 1, Z_{\overline{D}_x}\rangle} \Big)\\
		&=e^{-\gamma}\bigg(p_0+
		\mathbb{P}_{\mathbf{y}}\big(\mathbf{S}_1\notin \overline{D}_x\big)\sum_{k=1}^{\infty}p_k e^{-\theta k}
		+\mathbb{E}_{\mathbf{y}}\Big( 1_{\{\mathbf{S}_1\in \overline{D}_x\}} \sum_{k=1}^{\infty}p_k v_{(\gamma, \theta)}(\mathbf{S}_1;x)^k\Big)
		\bigg).
	\end{align}
	It follows from the definition of $\phi$ in \eqref{def-phi}, $\sum_{k=0}^\infty p_k=1$ and the above equation that 
	\begin{align}\label{eq-u-eta-theta}
		\quad \quad u_{(\gamma, \theta)}(\mathbf{y};x)
		=&1-e^{-\gamma}p_0
		-e^{-\gamma}\mathbb{P}_{\mathbf{y}}\left(\tau_x=1\right)\sum_{k=1}^{\infty}p_k e^{-\theta k} \nonumber \\
		&-e^{-\gamma}\mathbb{E}_{\mathbf{y}}\big( 1_{\{\mathbf{S}_1\in \overline{D}_x\}} \left[\phi( u_{(\gamma, \theta)}(\mathbf{S}_1;x))+(1-p_0)-u_{(\gamma, \theta)}(\mathbf{S}_1;x)\right]
		\big)\nonumber \\
		=&(1-e^{-\gamma})+e^{-\gamma} \mathbb{P}_{\mathbf{y}}\left(\tau_x=1\right) \sum_{k=1}^{\infty}p_k (1-e^{-\theta k})\nonumber \\
		&-e^{-\gamma}\mathbb{E}_{\mathbf{y}}\big( 1_{\{\mathbf{S}_1\in \overline{D}_x\}} \phi\left(u_{(\gamma, \theta)}(\mathbf{S}_1;x)\right)\big)
		+e^{-\gamma}\mathbb{E}_{\mathbf{y}}\big( 1_{\{\mathbf{S}_1\in \overline{D}_x\}} u_{(\gamma, \theta)}(\mathbf{S}_1;x)\big) \nonumber \\
		=:&H_{(\gamma,\theta)}(\mathbf{y};x)+e^{-\gamma}\mathbb{E}_{\mathbf{y}}\big( 1_{\{\mathbf{S}_1\in \overline{D}_x\}} u_{(\gamma, \theta)}(\mathbf{S}_1;x)\big).
	\end{align}
	Replacing $\mathbf{y}$ with $\mathbf{S}_1$ in \eqref{eq-u-eta-theta} and using the Markov property, we see that on the set $\{\mathbf{S}_1\in \overline{D}_x\}$, 
	\begin{align}\label{eq-u-eta-theta-1}
		u_{(\gamma, \theta)}(\mathbf{S}_1;x)
		=H_{(\gamma,\theta)}(\mathbf{S}_1;x)+e^{-\gamma}\mathbb{E}_{\mathbf{y}}\big( 1_{\{\mathbf{S}_2\in \overline{D}_x\}} u_{(\gamma, \theta)}(\mathbf{S}_2;x)\big|\mathcal{F}_1\big),
	\end{align}
	where $\mathcal{F}_n:=\sigma(\mathbf{S}_0,...,\mathbf{S}_n)$. Combining \eqref{eq-u-eta-theta} and \eqref{eq-u-eta-theta-1}, we get
	\begin{align}
		&u_{(\gamma, \theta)}(\mathbf{y};x)
		=H_{(\gamma,\theta)}(\mathbf{S}_0;x)
		+e^{-\gamma}\mathbb{E}_{\mathbf{y}}\big(1_{\{\mathbf{S}_1\in \overline{D}_x\}} H_{(\gamma,\theta)}(\mathbf{S}_1;x)\big)
		+e^{-2\gamma}\mathbb{E}_{\mathbf{y}}\left(1_{\{\tau_x>2\}} u_{(\gamma, \theta)}(\mathbf{S}_2;x)\right).
	\end{align}
	According to the definition of $H_{(\gamma, \theta)}$, it is easy to see that 
	\begin{align}
		&  H_{(\gamma,\theta)}(\mathbf{S}_0;x)
		+e^{-\gamma}\mathbb{E}_{\mathbf{y}}\big(1_{\{\mathbf{S}_1\in \overline{D}_x\}} H_{(\gamma,\theta)}(\mathbf{S}_1;x)\big)\\
		& = (1-e^{-\gamma})+e^{-\gamma} \mathbb{P}_{\mathbf{y}}\left(\tau_x=1 \right) \sum_{k=1}^{\infty}p_k (1-e^{-\theta k}) -e^{-\gamma}\mathbb{E}_{\mathbf{y}}\left( 1_{\{\tau_x>1 \}} \phi\left(u_{(\gamma, \theta)}(\mathbf{S}_1;x)\right)\right) \\
		& \quad + (1-e^{-\gamma})e^{-\gamma} \mathbb{P}_{\mathbf{y}}(\tau_x>1) +e^{-2\gamma} \mathbb{P}_{\mathbf{y}}\left(\tau_x=2 \right) \sum_{k=1}^{\infty}p_k (1-e^{-\theta k}) \\ &\quad -e^{-2\gamma}\mathbb{E}_{\mathbf{y}}\left( 1_{\{\tau_x>2 \}} \phi\left(u_{(\gamma, \theta)}(\mathbf{S}_2;x)\right)\right).
	\end{align}
	Therefore, we conclude that when $\mathbf{y}\in \overline{D}_x$, 
	\begin{align}
		u_{(\gamma, \theta)}(\mathbf{y};x)
		& =(1-e^{-\gamma}) \sum_{i=0}^1 e^{-\gamma i} \mathbb{P}_{\mathbf{y}}(\tau_x>i) +\sum_{k=1}^{\infty}p_k (1-e^{-\theta k}) \sum_{i=1}^2 e^{-\gamma i} \mathbb{P}_{\mathbf{y}}\left(\tau_x=i \right)  \\
		&\quad -\sum_{i=1}^2 e^{-\gamma i}\mathbb{E}_{\mathbf{y}}\left( 1_{\{\tau_x>i \}} \phi\left(u_{(\gamma, \theta)}(\mathbf{S}_i;x)\right)\right) 
		+e^{-2\gamma}\mathbb{E}_{\mathbf{y}}\left(1_{\{\tau_x>2\}} u_{(\gamma, \theta)}(\mathbf{S}_2;x)\right).
	\end{align}
	Iterating the above equation $N$ times, we deduce that 
	\begin{align}\label{expression-u}
		u_{(\gamma, \theta)}(\mathbf{y};x)
		& =(1-e^{-\gamma})\sum_{i=0}^{N-1}e^{-\gamma i} \mathbb{P}_{\mathbf{y}}\left(\tau_x>i\right)
		+\sum_{k=1}^{\infty}p_k (1-e^{-\theta k})\sum_{i=1}^{N}e^{-\gamma i} \mathbb{P}_{\mathbf{y}}\left(\tau_x=i\right) \nonumber \\
		&\quad -\sum_{i=1}^{N}e^{-\gamma i}\mathbb{E}_{\mathbf{y}}\left(1_{\{\tau_x>i\}}\phi \left(u_{(\gamma,\theta)}(\mathbf{S}_i;x)\right)\right)
		+e^{-\gamma N}\mathbb{E}_{\mathbf{y}}\left(1_{\{\tau_x>N\}} u_{(\gamma, \theta)}(\mathbf{S}_N;x)\right).
	\end{align}
	Noticing that $\lim_{N\to\infty} \mathbb{P}_{\mathbf{y}}(\tau_x>N)=0$ and that $u_{(\gamma, \theta)}\in [0,1]$, the last term on the right hand side of \eqref{expression-u} vanishes as $N$ tends to infinity. Therefore,  we arrive at the desired result by taking $N\to\infty$.
\end{proof}

For any $\mathbf{y}\in \overline{D}_1,  \theta,\gamma\geq 0$ and $x>0$, define 
\begin{align}\label{Def-of-phi-x}
	U_{(\gamma,\theta)}\left(\mathbf{y};x\right):=x^\frac{2}{\alpha-1}u_{(\gamma x^{-\frac{2\alpha}{\alpha-1}},\theta x^{-\frac{2}{\alpha-1}})}\left(x\mathbf{y};x\right)\quad \mbox{and} \quad \phi^{(x)} (u):= x^{\frac{2\alpha}{\alpha-1}} \phi\big( ux^{-\frac{2}{\alpha-1}}\big) . 
\end{align}
The following lemma establishes fundamental properties of  $\phi^{(x)}$. The proof is standard and can be found in \cite[Lemma 2.8]{HRS2026} and \cite[ Lemma 2.14]{HRS2025}.

\begin{lemma}\label{lemma-property-phi}
	There exists a positive constant $C_\phi$ such that 
	\begin{align}\label{upper-bound-phi}
		|\phi^{(x)}(u)-\phi^{(x)}(v)|\le C_\phi~|u-v||u^{\alpha-1}+v^{\alpha-1}|,\quad \forall~u,v\in[0,x^{\frac{2}{\alpha-1}}],
	\end{align}
	and for each $K>0$, 
	\begin{align}\label{asymptotic-phi}
		\lim_{x\to \infty}\sup_{u\in [0,K]} \left| 
		u^{-\alpha}\phi^{(x)}(u)
		-\mathcal{C}(\alpha) \right| = 0,
	\end{align}
	where the constant $\mathcal{C}(\alpha)$ is defined in \eqref{Stable-branching}. 
\end{lemma}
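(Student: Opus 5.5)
The plan is to write $\phi$ in terms of the generating function $f(s)=\sum_{k\ge0}p_ks^k$. Since $\sum_{k\ge1}p_k(1-u)^k=f(1-u)-p_0$, we have $\phi(u)=f(1-u)-1+u$. Then $\phi(0)=0$, $\phi'(u)=1-f'(1-u)\ge0$ and $\phi''(u)=f''(1-u)\ge0$. Consequently $\phi$ is nonnegative, nondecreasing and convex on $[0,1]$, with $\phi'(0)=1-f'(1)=0$ by criticality. Both assertions then reduce to the behaviour of $g(u):=\phi'(u)=1-f'(1-u)=\sum_k kp_k\big(1-(1-u)^{k-1}\big)$ near $u=0$. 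By the scaling in \eqref{Def-of-phi-x}, $\frac{d}{du}\phi^{(x)}(u)=x^{2}g(ux^{-\frac{2}{\alpha-1}})$.

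For \eqref{upper-bound-phi}, I would show $g(s)\le C s^{\alpha-1}$ for $s\in[0,1]$. For $\alpha=2$ this is immediate from $g(s)\le s f''(1)=s\sum k(k-1)p_k$. For $\alpha\in(1,2)$, I would use $1-(1-s)^{k-1}\le \min\{(k-1)s,1\}$ to get
\[
g(s)\le s\sum_{k\le 1/s}k^2p_k+\sum_{k>1/s}kp_k .
\]
Summation by parts with the tail assumption $\sum_{j\ge n}p_j\sim \kappa(\alpha)n^{-\alpha}$ gives $\sum_{k\le n}k^2p_k=O(n^{2-\alpha})$ and $\sum_{k>n}kp_k=O(n^{1-\alpha})$, so both terms are $O(s^{\alpha-1})$. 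Then for $0\le v\le u\le x^{\frac{2}{\alpha-1}}$,
\[
|\phi^{(x)}(u)-\phi^{(x)}(v)|=\int_v^u x^2 g(tx^{-\frac{2}{\alpha-1}})\,dt\le C\int_v^u t^{\alpha-1}dt\le C|u-v|u^{\alpha-1}.
\]
The factors of $x$ cancel exactly because $x^2\cdot x^{-2}=1$. This gives the bound with $C_\phi=C$.

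For \eqref{asymptotic-phi}, I would note $u^{-\alpha}\phi^{(x)}(u)=\phi(s)/s^\alpha$ with $s=ux^{-\frac2{\alpha-1}}\to0$ uniformly for $u\in(0,K]$. It therefore suffices to prove $\phi(s)\sim\mathcal{C}(\alpha)s^\alpha$ as $s\downarrow0$; the point $u=0$ is handled by the same uniform bound or by convention. For $\alpha=2$ this is Taylor's theorem, since $f''$ is continuous at $1$ from the left, giving $\phi(s)\sim\frac12 f''(1)s^2$. For $\alpha\in(1,2)$, the key step is the classical Tauberian statement: $\sum_{j\ge n}p_j\sim\kappa n^{-\alpha}$ implies $f(1-s)-1+s\sim\frac{\kappa\Gamma(2-\alpha)}{\alpha-1}s^\alpha$.

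I would prove this via the representation
\[
\phi(s)=\int_0^s g(t)\,dt,\qquad g(t)=\sum_k kp_k\bigl(1-(1-t)^{k-1}\bigr).
\]
Rewriting $g$ through tail sums $\bar p_n=\sum_{j\ge n}p_j$ gives
\[
g(t)=t\sum_{n\ge1}(1-t)^{n-1}\Big(\sum_{j\ge n}(j-n+1)\cdots\Big),
\]
which one can arrange as $g(t)=\sum_{n} \bar q_n\, t(1-t)^{n-1}$ with $\bar q_n=\sum_{j>n}jp_j\sim\frac{\alpha\kappa}{\alpha-1}n^{1-\alpha}$. An Abelian theorem for power series with regularly varying coefficients then yields
\[
g(t)\sim\frac{\alpha\kappa}{\alpha-1}\Gamma(2-\alpha)\,t^{\alpha-1}.
\]
Integrating gives $\phi(s)\sim\frac{\kappa\Gamma(2-\alpha)}{\alpha-1}s^\alpha=\mathcal{C}(\alpha)s^\alpha$.

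This Abelian computation, including getting the constant exactly as in \eqref{Stable-branching}, is the main obstacle. I would verify it by comparing with the Riemann sum of $\int_0^\infty y^{1-\alpha}e^{-y}dy=\Gamma(2-\alpha)$ after substituting $n=y/t$. The remaining steps are routine dominated-convergence arguments, which is why the paper defers to \cite[Lemma 2.8]{HRS2026}.
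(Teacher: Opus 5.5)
Your proof is correct. The paper does not prove this lemma itself; it only refers to Lemma 2.8 of HRS2026 and Lemma 2.14 of HRS2025. Your argument is a complete, self-contained version of the standard approach, in which everything is reduced to $g=\phi'$:
\begin{itemize}
\item The uniform bound $g(s)\le Cs^{\alpha-1}$ on $[0,1]$, obtained by truncating at $k\approx 1/s$ and summing by parts, gives \eqref{upper-bound-phi}, thanks to the exact cancellation $x^2\cdot x^{-2}=1$.
\item The Abelian theorem applied to $g(t)=\sum_{n\ge1}\bar q_n\,t(1-t)^{n-1}$, where $\bar q_n=\sum_{j>n}jp_j\sim\frac{\alpha\kappa(\alpha)}{\alpha-1}n^{1-\alpha}$, followed by integration in $t$, gives \eqref{asymptotic-phi}.
\end{itemize}
The constant checks out: $\frac{\alpha\kappa(\alpha)\Gamma(2-\alpha)}{\alpha-1}\cdot\frac1\alpha=\mathcal{C}(\alpha)$, as in \eqref{Stable-branching}.

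Two cosmetic corrections. First, the intermediate display involving ``$(j-n+1)\cdots$'' is garbled. It should be replaced by the identity $1-(1-t)^{k-1}=t\sum_{n=1}^{k-1}(1-t)^{n-1}$ together with an interchange of the two sums. Second, the step from $\bar q_n$ to $g$ is an Abelian statement, not a Tauberian one, so no extra monotonicity hypothesis is needed there.

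An equivalent route goes in the opposite order. One first shows $\phi(s)\sim\mathcal{C}(\alpha)s^\alpha$, then obtains $\phi'(s)\sim\alpha\,\mathcal{C}(\alpha)s^{\alpha-1}$ from the monotone density theorem, since $\phi'$ is nondecreasing. Your order (estimate the derivative first, then integrate) avoids that step.
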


Define
\begin{align}\label{Def-of-G-x}
	G^x(\mathbf{y}):=x^{\frac{2}{\alpha-1}}\Big(1-\mathbb{E}_{x\mathbf{y}}\big(e^{-\gamma  x^{-\frac{2\alpha}{\alpha-1}} \tau_x}\big)
	+\sum_{k=1}^{\infty}p_k \big(1-e^{-\theta x^{-\frac{2}{\alpha-1}} k}\big) \mathbb{E}_{x\mathbf{y}}\big(e^{-\gamma x^{-\frac{2\alpha}{\alpha-1}}  \tau_x}\big)\Big).
\end{align}
The following proposition, the main result of this section, provides an expression for $U_{(\gamma, \theta)}(\mathbf{y};x)$ and shows that for each fixed $\gamma, \theta \geq 0$, the function $U_{\theta}(\mathbf{y};x)$ is uniformly bounded.

\begin{prop}\label{lemma-expression-U}
	For any $x>1$ and $\mathbf{y}\in \overline{D}_1$, we have 
	\begin{align}
		U_{(\gamma,\theta)}(\mathbf{y};x)  = G^x (\mathbf{y})
		-x^{-2} \mathbb{E}_{x\mathbf{y}}\bigg(\sum_{i=1}^{\tau_x-1}e^{-\gamma i x^{-\frac{2\alpha}{\alpha-1}}} \phi^{(x)}\Big(U_{(\gamma, \theta)}\Big(\frac{\mathbf{S}_{i}}{x};x\Big)\Big)\bigg),
	\end{align}
	here we used the convension that $\sum_{i=1}^0 =0$ on the set $\{\tau_x=1\}$. 
	Moreover, for any $\theta, \gamma\geq 0 $,
	\begin{align}\label{def-Gamma}
		\sup_{\mathbf{y}\in \overline{D}_1, x>1} U_{(\gamma, \theta)}(\mathbf{y};x)  \leq  C_\tau \gamma 
		+\theta=: \Gamma_{(\gamma, \theta)}<\infty.
	\end{align}
\end{prop}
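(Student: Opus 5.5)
The plan is to obtain the identity by rescaling Lemma \ref{expression-small-u}, and then to read off the bound from the sign of $\phi$ together with Lemma \ref{lemma-step1}.

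\textbf{The identity.} Apply Lemma \ref{expression-small-u} with $\gamma' := \gamma x^{-\frac{2\alpha}{\alpha-1}}$, $\theta' := \theta x^{-\frac{2}{\alpha-1}}$, and starting point $x\mathbf{y}\in\overline{D}_x$. Multiply the resulting equation by $x^{\frac{2}{\alpha-1}}$.

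\begin{itemize}
\item By the definition \eqref{Def-of-G-x}, the first two terms become exactly $G^x(\mathbf{y})$.
\item For the remaining sum, use the definitions \eqref{Def-of-phi-x}. Since $u_{(\gamma',\theta')}(\mathbf{S}_i;x) = x^{-\frac{2}{\alpha-1}}U_{(\gamma,\theta)}(\mathbf{S}_i/x;x)$, we get
\[
x^{\frac{2}{\alpha-1}}\,\phi\big(u_{(\gamma',\theta')}(\mathbf{S}_i;x)\big)
= x^{\frac{2}{\alpha-1}-\frac{2\alpha}{\alpha-1}}\,\phi^{(x)}\Big(U_{(\gamma,\theta)}\Big(\frac{\mathbf{S}_i}{x};x\Big)\Big),
\]
and the exponent satisfies $\frac{2}{\alpha-1}-\frac{2\alpha}{\alpha-1} = -2$. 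This produces the factor $x^{-2}$.
\item Interchange $\sum_{i\ge1}$ with $\mathbb{E}_{x\mathbf{y}}$, and use $\sum_{i\ge 1}1_{\{\tau_x>i\}}(\cdot) = \sum_{i=1}^{\tau_x-1}(\cdot)$.
\end{itemize}

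\textbf{Justifying the interchange.} The interchange is justified by Tonelli, once we know that $\phi\ge0$ on $[0,1]$. To see this, write $f(s)=\sum_{k\ge0}p_k s^k$, so that $\phi(u)=f(1-u)-(1-u)$. Since $f$ is convex with $f(1)=1$ and $f'(1)=\sum_k kp_k=1$, the tangent-line inequality gives $f(1-u)\ge 1-u$, hence $\phi(u)\ge 0$. Alternatively, every term is bounded and $\mathbb{E}\tau_x<\infty$ by Lemma \ref{lemma-step1}, so dominated convergence also applies.

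\textbf{The bound \eqref{def-Gamma}.} Since $\phi\ge0$, we have $\phi^{(x)}\ge0$, so the subtracted term in the identity is nonnegative and $U_{(\gamma,\theta)}(\mathbf{y};x)\le G^x(\mathbf{y})$. Next, estimate $G^x$ using $\mathbb{E}(e^{-\gamma'\tau_x})\le 1$ and the elementary inequalities $1-e^{-a}\le a$ and $\sum_k kp_k=1$:
\[
G^x(\mathbf{y})\le x^{\frac{2}{\alpha-1}}\gamma'\,\mathbb{E}_{x\mathbf{y}}(\tau_x) + x^{\frac{2}{\alpha-1}}\theta'
= \gamma\, x^{-2}\mathbb{E}_{x\mathbf{y}}(\tau_x)+\theta .
\]
By \eqref{step1}, $x^{-2}\mathbb{E}_{x\mathbf{y}}(\tau_x)\le C_\tau$ uniformly in $x>1$ and $\mathbf{y}\in\overline{D}_1$. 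Hence $U_{(\gamma,\theta)}(\mathbf{y};x)\le C_\tau\gamma+\theta$, which is \eqref{def-Gamma}.

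The only genuinely non-bookkeeping step is the nonnegativity of $\phi$, which drives both the Tonelli interchange and the upper bound. The rest is tracking the powers of $x$ correctly. The values $U\in[0,x^{\frac{2}{\alpha-1}}]$ lie in the domain of $\phi^{(x)}$ automatically, because $u\in[0,1]$.
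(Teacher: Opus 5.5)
Your proposal is correct and follows the paper's proof essentially step for step: you rescale Lemma \ref{expression-small-u} with parameters $\gamma x^{-\frac{2\alpha}{\alpha-1}}$, $\theta x^{-\frac{2}{\alpha-1}}$ to get the identity, then drop the nonnegative $\phi^{(x)}$ term and bound $G^x$ using $1-e^{-z}\le z$, $\sum_k kp_k=1$ and Lemma \ref{lemma-step1}. Your convexity argument for $\phi\ge 0$ and the Tonelli justification of the sum--expectation interchange are details the paper leaves implicit.
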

\begin{proof}
	Combining Lemma \ref{expression-small-u} and the definitions of 
	$U_{(\gamma, \theta)}$ 
	and $\phi^{(x)}$, we have 
	\begin{align}\label{expression-U}
		U_{(\gamma,\theta)}\left(\mathbf{y};x\right)
		&=	G^x(\mathbf{y})-x^{\frac{2}{\alpha-1}}\sum_{i=1}^{\infty}e^{-\gamma i x^{-\frac{2\alpha}{\alpha-1}} }\mathbb{E}_{x \mathbf{y}}\bigg(1_{\{\tau_x>i\}}\phi \Big(x^{-\frac{2}{\alpha-1}}U_{(\gamma,\theta)}\Big(\frac{\mathbf{S}_i}{x};x\Big)\Big)\bigg)\\
		& = G^x(\mathbf{y})-\frac{1}{x^2}\sum_{i=1}^{\infty}e^{-\gamma i x^{-\frac{2\alpha}{\alpha-1}} }\mathbb{E}_{x \mathbf{y}}\bigg(1_{\{\tau_x>i\}}\phi^{(x)} \Big(U_{(\gamma,\theta)}\Big(\frac{\mathbf{S}_i}{x};x\Big)\Big)\bigg),
	\end{align}
	which implies the first result.
	For the second inequality,  since  $\phi(u)\geq 0$, we conclude from the inequality $1-e^{-z}\leq z$ for all $z\geq 0$ and the definition of $G^x$ that
	\begin{align}\label{upp-U}
		& U_{(\gamma, \theta)}(\mathbf{y};x) 
		\leq  x^{\frac{2}{\alpha-1}}\bigg(1-\mathbb{E}_{x\mathbf{y}}\Big(e^{-\gamma  x^{-\frac{2\alpha}{\alpha-1}} \tau_x}\Big)
		+\sum_{k=1}^{\infty}p_k \Big(1-e^{-\theta x^{-\frac{2}{\alpha-1}} k}\Big) \mathbb{E}_{x\mathbf{y}}\Big(e^{-\gamma x^{-\frac{2\alpha}{\alpha-1}}  \tau_x}\Big)\bigg)\\
		& \leq  x^{\frac{2}{\alpha-1}}\Big(\gamma x^{-\frac{2\alpha}{\alpha-1}}  \mathbb{E}_{x\mathbf{y}}\left(\tau_x\right)
		+\theta x^{-\frac{2}{\alpha-1}}  \sum_{k=1}^{\infty} k p_k  \Big) 
		= \gamma x^{-2}  \mathbb{E}_{x\mathbf{y}}\left(\tau_x\right)
		+\theta \leq C_\tau \gamma 	+\theta,
	\end{align}
	where in the last equality we used the fact that $\sum_{k=1}^\infty kp_k =1$ and the last inequality follows from Lemma \ref{lemma-step1}. This gives the desired result.
\end{proof}

\section{Proof of the main results}\label{S4}

\subsection{Convergence of $U_{(\gamma,\theta)}$ for small $\gamma$ and $\theta$}\label{S4.1}

Let $\theta_0>0$ be a fixed constant such that 
\begin{align}\label{Def-of-theta-0}
	2C_\tau C_\phi \Gamma_{(\theta_0, \theta_0)}^{\alpha-1} =\frac{1}{2},
\end{align}
where $\Gamma_{(\theta_0, \theta_0)}$ is defined in \eqref{def-Gamma}.
From Proposition \ref{lemma-expression-U}, for each fixed $\gamma, \theta\in [0,\theta_0]$, the function $U_{(\gamma, \theta)} (\mathbf{y} ; x)$ is bounded for all $\mathbf{y}\in \overline{D}_1$ and $x>1$. 
Hence, for any fixed $\gamma, \theta>0$ and any sequence $\{a_j\}\subset [1,\infty)$ satisfying  $\lim_{j\to \infty}a_j= \infty$, according to a diagonalization argument, there exists a subsequence $\{x_k\}=\{x_k(\gamma, \theta)\} \subset  \{a_j\}$ with $\lim_{k\to\infty}x_k=\infty$ such that the following limit exists:
\begin{align}\label{limi-of-U}
	V_{(\gamma,\theta)}(\mathbf{y}):= \lim_{k\to\infty} U_{(\gamma, \theta)} (\mathbf{y};x_k), \quad \forall \mathbf{y}\in \overline{D}_1\cap \mathbb{Q}^d. 
\end{align}
Our next result shows that the above limit holds for all $ \mathbf{y} \in \overline{D}_1$.

\begin{prop}
	$(1)$ $V_{(\gamma,\theta)}(\mathbf{y})$ is continuous in $\mathbf{y} \in \overline{D}_1 \cap \mathbb{Q}^d$. Moreover, the following limit exists:
	\begin{align}
		V_{(\gamma,\theta)}(\mathbf{y}):=\lim_{\mathbf{z}\to \mathbf{y}, \mathbf{z} \in \overline{D}_1 \cap \mathbb{Q}^d} V_{(\gamma,\theta)}(\mathbf{z}), \quad \mathbf{y} \in \overline{D}_1.
	\end{align}
	$(2)$ The limit \eqref{limi-of-U} holds for any $\mathbf{y} \in \overline{D}_1$.
\end{prop}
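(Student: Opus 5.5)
The plan is to prove an equicontinuity-type estimate for $U_{(\gamma,\theta)}(\cdot;x)$ that is uniform in large $x$:
\begin{equation}
\lim_{\delta\downarrow 0}\limsup_{x\to\infty}\sup_{\mathbf{z}\in\overline{D}_\delta}\sup_{\mathbf{y},\mathbf{y}+\mathbf{z}\in\overline{D}_1}\big|U_{(\gamma,\theta)}(\mathbf{y}+\mathbf{z};x)-U_{(\gamma,\theta)}(\mathbf{y};x)\big|=0 .
\end{equation}
Both parts follow from this. For (1), pass to the limit along $x_k$ at rational points; this shows $V_{(\gamma,\theta)}$ is uniformly continuous on $\overline{D}_1\cap\mathbb{Q}^d$, so it extends continuously to $\overline{D}_1$. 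For (2), fix $\mathbf{y}\in\overline{D}_1$ and pick a rational $\mathbf{y}'$ with $\|\mathbf{y}-\mathbf{y}'\|<\delta$. A three-epsilon argument then bounds $\limsup_k|U(\mathbf{y};x_k)-V(\mathbf{y})|$ by the modulus above plus $|V(\mathbf{y}')-V(\mathbf{y})|$, and this tends to $0$ as $\delta\to0$.

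To get the estimate, I couple the walks from $x(\mathbf{y}+\mathbf{z})$ and $x\mathbf{y}$ by using the same increments, so the first is $\mathbf{S}_n+x\mathbf{z}$ under $\mathbb{P}_{x\mathbf{y}}$ with exit time $\tau_x^{\mathbf{z}}$ from \eqref{Def-tau-x-z}. By Proposition \ref{lemma-expression-U}, the difference splits into two parts. The first is $G^x(\mathbf{y}+\mathbf{z})-G^x(\mathbf{y})$. Using $|e^{-a}-e^{-b}|\le|a-b|$, it is bounded by
\begin{equation}
x^{\frac{2}{\alpha-1}}\gamma x^{-\frac{2\alpha}{\alpha-1}}\,\mathbb{E}_{x\mathbf{y}}|\tau_x-\tau_x^{\mathbf{z}}|\,(1+\theta)=\gamma(1+\theta)\, x^{-2}\mathbb{E}_{x\mathbf{y}}|\tau_x-\tau_x^{\mathbf{z}}|,
\end{equation}
where I used $\sum_k p_k(1-e^{-\theta x^{-2/(\alpha-1)}k})\le \theta x^{-2/(\alpha-1)}$. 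By Lemma \ref{lemma-step2}, its $\limsup$ is at most $9\delta\gamma(1+\theta)/\eta^2$.

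The second part is the difference of the two sums. Write $F(\mathbf{w})=\phi^{(x)}(U(\mathbf{w};x))$. By Lemma \ref{lemma-property-phi} and the bound $U\le\Gamma:=\Gamma_{(\gamma,\theta)}$, we have $0\le F\le C_\phi\Gamma^\alpha$. I split the sum into two pieces.

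\emph{Piece (a): indices outside a common range.} These are indices $i$ between $\tau_x\wedge\tau_x^{\mathbf{z}}$ and $\tau_x\vee\tau_x^{\mathbf{z}}$. Their contribution is at most $C_\phi\Gamma^\alpha x^{-2}\mathbb{E}|\tau_x-\tau_x^{\mathbf{z}}|$, which is $O(\delta)$, again by Lemma \ref{lemma-step2}.

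\emph{Piece (b): common indices.} For $i<\tau_x\wedge\tau_x^{\mathbf{z}}$, the Lipschitz bound \eqref{upper-bound-phi} gives a term of at most
\begin{equation}
2C_\phi\Gamma^{\alpha-1}\big|U(\tfrac{\mathbf{S}_i}{x}+\mathbf{z};x)-U(\tfrac{\mathbf{S}_i}{x};x)\big|.
\end{equation}
Let $\Delta_\delta(x)$ denote the supremum in the estimate above. Summing over $i$ and using $x^{-2}\mathbb{E}\tau_x\le C_\tau$ bounds piece (b) by $2C_\tau C_\phi\Gamma^{\alpha-1}\Delta_\delta(x)$.

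This gives the inequality
\begin{equation}
\Delta_\delta(x)\le c\,\big(x^{-2}\sup\mathbb{E}|\tau_x-\tau_x^{\mathbf{z}}|\big)+2C_\tau C_\phi\Gamma^{\alpha-1}\Delta_\delta(x).
\end{equation}
For $\gamma,\theta\le\theta_0$, the definition \eqref{Def-of-theta-0} makes the last coefficient at most $1/2$. Since $\Delta_\delta(x)\le\Gamma<\infty$, it can be absorbed, giving $\limsup_x\Delta_\delta(x)\le C\delta$. This is precisely why $\theta_0$ was chosen this way.

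The main obstacle is the bookkeeping in the coupled sums. The two sums run to different stopping times, and the time weights $e^{-\gamma i x^{-2\alpha/(\alpha-1)}}$ are identical, so they cause no trouble. One must also check that the points $\mathbf{S}_i/x$ and $\mathbf{S}_i/x+\mathbf{z}$ both lie in $\overline{D}_1$ for the common indices, so that $\Delta_\delta$ applies; this holds exactly because $i<\tau_x\wedge\tau_x^{\mathbf{z}}$. Finally, one must confirm that $\Delta_\delta(x)$ is finite before the absorption step, which is guaranteed by Proposition \ref{lemma-expression-U}.
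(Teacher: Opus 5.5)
Your proposal is correct and follows essentially the same route as the paper: the same coupling via $\tau_x^{\mathbf{z}}$, the same bound on $G^x(\mathbf{y})-G^x(\mathbf{y}+\mathbf{z})$, and the same split of the sums into non-common indices (controlled by Lemma \ref{lemma-step2}) and common indices (controlled by \eqref{upper-bound-phi} and $C_\tau$), followed by absorption via the choice of $\theta_0$ in \eqref{Def-of-theta-0} and a three-$\varepsilon$ argument for part (2). The only difference is cosmetic: you absorb the supremum over all $\mathbf{z}\in\overline{D}_\delta$ rather than the paper's $G(\mathbf{z};x)$ for a fixed $\mathbf{z}$; like the paper, your argument covers $\gamma,\theta\in[0,\theta_0]$.
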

\begin{proof}
	(1) We treat $G^x$ first. Recall the definition of $\tau_x^{\mathbf{z}}$ in \eqref{Def-tau-x-z}. Since $(\tau_x, \mathbb{P}_{x(\mathbf{y}+\mathbf{z})}) \stackrel{\mathrm{d}}{=}  (\tau_x^\mathbf{z}, \mathbb{P}_{x\mathbf{y}}) $,  
	for each $x>1$ and $\mathbf{y}, \mathbf{y}+\mathbf{z}\in \overline{D}_1$,
	\begin{align}\label{upper-bound-G-x}
		\left| G^x(\mathbf{y})- G^x(\mathbf{y}+\mathbf{z})\right| & \leq  x^{\frac{2}{\alpha-1}} \Big|\mathbb{E}_{x\mathbf{y}}\Big(e^{-\gamma x^{-\frac{2\alpha}{\alpha-1} \tau_x}} - e^{-\gamma x^{-\frac{2\alpha}{\alpha-1} \tau_x^\mathbf{z}}} \Big)\Big| 
		\nonumber \\
		&\quad +x^{\frac{2}{\alpha-1}}\sum_{k=1}^{\infty}p_k \Big(1-e^{-\theta x^{-\frac{2}{\alpha-1}}k}\Big)\Big|\mathbb{E}_{x\mathbf{y}}\Big(e^{-\gamma x^{-\frac{2\alpha}{\alpha-1}} \tau_x}- e^{-\gamma x^{-\frac{2\alpha}{\alpha-1}} \tau_x^\mathbf{z}}\Big)\Big| \nonumber \\
		& \leq  \gamma x^{- 2} \mathbb{E}_{x\mathbf{y}}\left( \left| \tau_x-  \tau_x^\mathbf{z} \right| \right)
		+\theta \gamma x^{-\frac{2\alpha}{\alpha-1}}\sum_{k=1}^{\infty}k p_k \mathbb{E}_{x\mathbf{y}}\left( \left| \tau_x-  \tau_x^\mathbf{z} \right| \right) \nonumber \\
		& = \big(\gamma+  \theta \gamma x^{-\frac{2}{\alpha-1}}\big) x^{- 2} \mathbb{E}_{x\mathbf{y}}\left( \left| \tau_x-  \tau_x^\mathbf{z} \right| \right)
		\leq \left(\gamma+  \theta \gamma \right) x^{- 2} \mathbb{E}_{x\mathbf{y}}\left( \left| \tau_x-  \tau_x^\mathbf{z} \right| \right),
	\end{align}
	where the second inequality follows from 
	the fact  $|e^{-a}-e^{-b}|\leq |a-b|$ for $a,b\geq 0$  and  we use the fact that $\sum_{k=1}^\infty kp_k =1$ in the last equallity.

	Now we turn to the proof of (1).  Combining Proposition \ref{lemma-expression-U} and \eqref{upper-bound-G-x},
	\begin{align}\label{difference-U}
		&\big| U_{(\gamma,\theta)}\left(\mathbf{y};x\right) -U_{(\gamma,\theta)}\left(\mathbf{y}+\mathbf{z};x\right)\big| \nonumber \\
		\le & \left(\gamma+  \theta \gamma \right) x^{- 2} \mathbb{E}_{x\mathbf{y}}\left( \left| \tau_x-  \tau_x^\mathbf{z} \right| \right)
		+x^{-2}\bigg| \mathbb{E}_{x \mathbf{y}}\bigg(\sum_{i=1}^{\tau_{x}-1}e^{-\gamma ix^{-\frac{2\alpha}{\alpha-1}}}\phi^{(x)} \Big(U_{(\gamma,\theta)}\Big(\frac{\mathbf{S}_i}{x};x\Big)\Big)\bigg) \nonumber
		\\
		&\quad -\mathbb{E}_{x \mathbf{y}}\bigg(\sum_{i=1}^{\tau_{x}^\mathbf{z}-1}e^{-\gamma ix^{-\frac{2\alpha}{\alpha-1}}}\phi^{(x)} \Big(U_{(\gamma,\theta)}\Big(\frac{\mathbf{S}_i}{x} +\mathbf{z};x\Big)\Big)\bigg)\bigg| \nonumber \\
		= : &\left(\gamma+  \theta \gamma \right) x^{- 2} \mathbb{E}_{x\mathbf{y}}\left( \left| \tau_x-  \tau_x^\mathbf{z} \right| \right)+ \Delta_{x} (\mathbf{y}, \mathbf{z}). 
	\end{align}
	Define
	\begin{align}
		G(\mathbf{z};x):=
		\sup_{\mathbf{a}, \mathbf{a}+\mathbf{z}\in \overline{D}_1}
		\left|U_{(\gamma,\theta)}\left(\mathbf{a};x\right)-U_{(\gamma,\theta)}\left(\mathbf{a}+\mathbf{z};x\right)\right|.
	\end{align}
	Combining  Lemma \ref{lemma-property-phi} and Proposition \ref{lemma-expression-U}, we get that 
	\begin{align}
		\Delta_{x} (\mathbf{y}, \mathbf{z}) & \leq x^{-2} C_\phi \mathbb{E}_{x \mathbf{y}}\bigg(\sum_{i=\tau_{x}\land \tau_{x}^\mathbf{z}-1}^{\tau_{x}-1}e^{-\gamma ix^{-\frac{2\alpha}{\alpha-1}}} \Big(U_{(\gamma,\theta)}\Big(\frac{\mathbf{S}_i}{x};x\Big)\Big)^\alpha \bigg)\\
		&\quad + x^{-2}		C_{\phi}
		\mathbb{E}_{x \mathbf{y}}\bigg(\sum_{i=\tau_{x}\land \tau_{x}^\mathbf{z}-1}^{\tau_{x}^\mathbf{z}-1}e^{-\gamma ix^{-\frac{2\alpha}{\alpha-1}}} \Big(U_{(\gamma,\theta)}\Big(\frac{\mathbf{S}_i}{x} +\mathbf{z};x\Big)\Big)^\alpha \bigg)\\
		&\quad + x^{-2} 2C_\phi \Gamma_{(\gamma, \theta)}^{\alpha-1}	G(\mathbf{z};x) \mathbb{E}_{x \mathbf{y}}\bigg(\sum_{i=1}^{\tau_{x}^\mathbf{z} \land \tau_x-1}e^{-\gamma ix^{-\frac{2\alpha}{\alpha-1}}} \bigg)\\
		& \leq x^{-2} 2C_\phi \Gamma_{(\gamma, \theta)}^{\alpha} \mathbb{E}_{\delta_{x \mathbf{y}}}\big(| \tau_x -\tau_x^{\mathbf{z}} |\big) 
		+ x^{-2} 2C_\phi \Gamma_{(\gamma, \theta)}^{\alpha-1}	G(\mathbf{z};x) \mathbb{E}_{x \mathbf{y}} \left(\tau_x\right). 
	\end{align}
	Therefore, combining the above inequality and Lemma \ref{lemma-step1}, we conclude that 
	\begin{align}
		\Delta_{x} (\mathbf{y}, \mathbf{z}) &\leq  2
		C_\phi \Gamma_{(\gamma, \theta)}^{\alpha} x^{-2} \mathbb{E}_{\delta_{x \mathbf{y}}}\big(| \tau_x -\tau_x^{\mathbf{z}}|\big) +  2 C_\tau C_\phi \Gamma_{(\gamma, \theta)}^{\alpha-1}	G(\mathbf{z};x) .
	\end{align}
	Plugging the inequality back to \eqref{difference-U} and taking 
	$\theta, \gamma \in [0, \theta_0]$,
	we get
	\begin{align}
		&\big| U_{(\gamma,\theta)}\left(\mathbf{y};x\right)-U_{(\gamma,\theta)}\left(\mathbf{y}+\mathbf{z};x\right)\big|\\
		& \le 2 C_\tau C_\phi \Gamma_{(\theta_0, \theta_0)}^{\alpha-1} G(\mathbf{z};x) 
		+\big[\theta_0+  \theta_0^2 + 2
		C_\phi \Gamma_{(\theta_0, \theta_0)}^{\alpha}  \big] \frac{1}{x^2}\mathbb{E}_{\delta_{x \mathbf{y}}}\big(| \tau_x -\tau_x^{\mathbf{z}}|\big),
	\end{align}
	where in the inequality we also used the fact that $\Gamma_{(\gamma,\theta)}$ is increasing in $\gamma$ and $\theta$. According to the definition of $\theta_0$ in \eqref{Def-of-theta-0}, taking supremum over $\mathbf{y}$ in the above inequality, we deduce that for any $x>1$ and $\gamma,\theta\in [0,\theta_0]$, 
	\begin{align}\label{upper-bound-sup-difference-U}
		G(\mathbf{z};x)  
		&\le 2\big[\theta_0+  \theta_0^2 + 2
		C_\phi \Gamma_{(\theta_0, \theta_0)}^{\alpha}  \big] \sup_{\mathbf{y},\mathbf{y}+\mathbf{z}\in \overline{D}_1} \frac{1}{x^2} \mathbb{E}_{\delta_{x \mathbf{y}}}\big(| \tau_x -\tau_x^{\mathbf{z}}|\big)\nonumber \\
		& =: C_1(\theta_0) \sup_{\mathbf{y},\mathbf{y}+\mathbf{z}\in \overline{D}_1} \frac{1}{x^2} \mathbb{E}_{\delta_{x \mathbf{y}}}\big(| \tau_x -\tau_x^{\mathbf{z}}|\big).
	\end{align}
	By Lemma \ref{lemma-step2} (with $\delta= \| \mathbf{z}\|$), taking $x=x_k\to \infty$ in \eqref{upper-bound-sup-difference-U}, we see that for each $\mathbf{y}, \mathbf{z} \in \mathbb{Q}^d$ with $\mathbf{y}, \mathbf{y}+\mathbf{z}\in \overline{D}_1$, 
	\begin{align}
		\left| V_{(\gamma,\theta)}(\mathbf{y})-V_{(\gamma,\theta)}(\mathbf{y}+\mathbf{z})\right|
		\le 
		9\eta^{-2}C_1(\theta_0)
		\|\mathbf{z}\|,
	\end{align}
	which implies that $V_{(\gamma,\theta)}(\mathbf{y})$ is continuous in $\mathbf{y} \in \overline{D}_1 \cap \mathbb{Q}^d$. Therefore, for any fixed $\gamma,\theta \in [0,\theta_0]$, we can extend the definition of $V_{(\gamma,\theta)}(\mathbf{y})$ from $\mathbf{y} \in \overline{D}_1 \cap \mathbb{Q}^d$ to $\mathbf{y} \in \overline{D}_1 $.
	
	(2) For any $\varepsilon>0$, let $\delta>0$ be small enough such that $9 \eta^{-2} C_1(\theta_0) \delta  <\frac{\varepsilon}{2}.$
	Then it follows from Lemma \ref{lemma-step2} that there exists $N=N(\delta)$ such that as $k>N$,
	\[
	\sup_{\mathbf{z}\in \overline{D}_\delta}\sup_{\mathbf{y},\mathbf{y}+\mathbf{z}\in \overline{D}_1}\frac{1}{x_k^2} \cdot \mathbb{E}_{x_k\mathbf{y}}\left(|\tau_{x_k}-\tau_{x_k}^\mathbf{z} |\right) 
	\leq 18 \eta^{-2} \delta. 
	\]
	Plugging this back to \eqref{upper-bound-sup-difference-U} yields that when 
	$k>N$,
	\begin{align}\label{e7'}
		\sup_{\mathbf{z}\in \overline{D}_\delta}\sup_{\mathbf{y}, \mathbf{y}+\mathbf{z}\in \overline{D}_1}\left|U_{(\gamma,\theta)}\left(\mathbf{y};x\right)-U_{(\gamma,\theta)}\left(\mathbf{y}+\mathbf{z};x\right)\right|
		\le C_1(\theta_0) \times 
		18\eta^{-2}\delta
		<\varepsilon.
	\end{align}
	Now for any $\mathbf{y} \in \overline{D}_1$, taking $\mathbf{y}_m\in \overline{D}_1 \cap \mathbb{Q}^d$ with $\mathbf{y}_m \to \mathbf{y}$ as $m\to \infty$, then we can fix large $m$ such that $\|\mathbf{y}_m -\mathbf{y}\|\le \delta$, the above inequality 
	combined with \eqref{limi-of-U} implie that 
	\begin{align}
		&\limsup_{k\to\infty}\left| V_{(\gamma,\theta)}(\mathbf{y})- U_{(\gamma,\theta)}(\mathbf{y};x_k)\right|
		\le  \left| V_{(\gamma,\theta)}(\mathbf{y})-V_{(\gamma,\theta)}(\mathbf{y}_m)\right|\\
		&\quad +\limsup_{k\to\infty}\left| V_{(\gamma,\theta)}(\mathbf{y}_m)- U_{(\gamma,\theta)}(\mathbf{y}_m;x_k)\right|
		+\limsup_{k\to\infty}\left| U_{(\gamma,\theta)}(\mathbf{y}_m;x_k)-U_{(\gamma,\theta)}(\mathbf{y};x_k)\right|\\
		&\le  \left| V_{(\gamma,\theta)}(\mathbf{y})-V_{(\gamma,\theta)}(\mathbf{y}_m)\right|+\varepsilon.
	\end{align}
	Taking $m\to\infty$ first and then $\varepsilon \to 0$ in the above inequality, we get the desired result immediately. 
\end{proof}

Recall that $G^x(\mathbf{y})$ is defined in \eqref{Def-of-G-x} and  $\tau_{1}^W =\inf\{t>0: W_t \notin \overline{D}_1\}$. In the following lemma, we establish the equation of $V_{(\gamma,\theta)}$ for small parameters $\gamma,\theta\in [0,\theta_0]$.

\begin{prop}\label{lemma-invariance-principle}
	For any $\mathbf{y} \in \overline{D}_1$ and $\gamma , \theta \in  [0,\theta_0]$, it holds that 
	\begin{align}\label{equivalent-eq-V-eta-theta}
		V_{(\gamma,\theta)}(\mathbf{y})
		&=\gamma\, \mathbb{E}_{\mathbf{y}}\big(\tau_{1}^W\big)+\theta
		-\mathcal{C}(\alpha)\, \mathbb{E}_{\mathbf{y}}\bigg[\int_{0}^{\tau_{1}^W} V_{(\gamma,\theta)}^\alpha(W_s)\,\mathrm{d}s\bigg],
	\end{align}
	where  $\mathcal{C}(\alpha)$ is the constant in \eqref{Stable-branching}.
\end{prop}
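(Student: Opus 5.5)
The plan is to pass to the limit $x=x_k\to\infty$ in the representation of Proposition \ref{lemma-expression-U}, treating its two terms separately.

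\textbf{Step 1: the term $G^{x_k}$.} We write
\[
1-\mathbb{E}_{x\mathbf{y}}\bigl(e^{-\gamma x^{-\frac{2\alpha}{\alpha-1}}\tau_x}\bigr)
=\gamma x^{-\frac{2\alpha}{\alpha-1}}\mathbb{E}_{x\mathbf{y}}(\tau_x)+O\bigl(x^{-\frac{4\alpha}{\alpha-1}}\mathbb{E}_{x\mathbf{y}}(\tau_x^2)\bigr).
\]
Multiplied by $x^{\frac{2}{\alpha-1}}$, this gives
\[
\gamma x^{-2}\mathbb{E}_{x\mathbf{y}}(\tau_x)+O\bigl(x^{-2-\frac{2}{\alpha-1}}\,x^{-2}\mathbb{E}(\tau_x^2)\bigr).
\]
The error vanishes because $x^{-4}\mathbb{E}(\tau_x^2)$ is bounded by Lemma \ref{lemma-step1} (exponential moments). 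By \eqref{joint-convergence} and the uniform integrability in \eqref{step1}, $x_k^{-2}\mathbb{E}_{x_k\mathbf{y}}\tau_{x_k}\to\mathbb{E}_{\mathbf{y}}\tau_1^W$.

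For the second part of $G^x$, note that $x^{\frac{2}{\alpha-1}}\sum_k p_k(1-e^{-\theta x^{-2/(\alpha-1)}k})\to\theta$ by dominated convergence, using $\sum kp_k=1$. Also $\mathbb{E}_{x\mathbf{y}}e^{-\gamma x^{-2\alpha/(\alpha-1)}\tau_x}\to1$. Hence $G^{x_k}(\mathbf{y})\to\gamma\mathbb{E}_{\mathbf{y}}\tau_1^W+\theta$.

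\textbf{Step 2: the nonlinear term.} Write it as
\[
\mathbb{E}_{x\mathbf{y}}\int_0^{x^{-2}(\tau_x-1)}e^{-\gamma\lfloor sx^2\rfloor x^{-\frac{2\alpha}{\alpha-1}}}\phi^{(x)}\bigl(U(\mathbf{S}^{(k)}_s;x)\bigr)\,\mathrm ds
\]
up to an $O(x^{-2})$ discretization error, using boundedness. We replace $\phi^{(x)}(U)$ by $\mathcal{C}(\alpha)U^\alpha$ using \eqref{asymptotic-phi} with $K=\Gamma_{(\gamma,\theta)}$; the error is $o(1)\,x^{-2}\mathbb{E}\tau_x\to0$. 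The exponential factor tends to $1$ uniformly on $\{s\le M\}$, and the time truncation at $M$ costs a uniformly small amount by \eqref{step1}.

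Next we replace $U(\cdot;x_k)$ by $V$. The key is that the convergence $U(\cdot;x_k)\to V$ is uniform on $\overline{D}_1$. This follows from the pointwise convergence in part (2) of the preceding proposition, the equicontinuity estimate \eqref{e7'}, and compactness of $\overline{D}_1$ (an Arzel\`a--Ascoli-type argument on a finite $\delta$-net). Since $\mathbf{S}^{(k)}_s\in\overline{D}_1$ for $s<x^{-2}\tau_x$ and $|a^\alpha-b^\alpha|\le C|a-b|$ on bounded sets, this replacement error is at most $C\sup|U-V|\cdot x^{-2}\mathbb{E}\tau_x\to0$.

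\textbf{Step 3: the main obstacle.} It remains to show
\[
\mathbb{E}_{x_k\mathbf{y}}\int_0^{x_k^{-2}\tau_{x_k}\wedge M}V^\alpha(\mathbf{S}^{(k)}_s)\,\mathrm ds\;\to\;\mathbb{E}_{\mathbf{y}}\int_0^{\tau_1^W\wedge M}V^\alpha(W_s)\,\mathrm ds.
\]
Here $V$ is continuous and bounded on $\overline{D}_1$; we extend it continuously to $\mathbb{R}^d$ (for instance $V(\mathbf{z}/\|\mathbf{z}\|)$ outside the ball). The functional $(w,t)\mapsto\int_0^{t\wedge M}V^\alpha(w_s)\,\mathrm ds$ is then continuous on $D([0,\infty),\mathbb{R}^d)\times\mathbb{R}_+$ at continuous paths, and it is bounded by $M\Gamma^\alpha$. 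The joint convergence \eqref{joint-convergence} and the continuous mapping theorem then give the limit. The delicate point is precisely that \eqref{joint-convergence} is required, i.e.\ continuity of exit times for Brownian paths from the ball; this is assumed as stated.

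Finally, letting $M\to\infty$ by monotone convergence, using $\mathbb{E}\tau_1^W<\infty$, yields \eqref{equivalent-eq-V-eta-theta}.
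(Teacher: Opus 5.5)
Your proposal is correct and follows essentially the same route as the paper: convergence of $G^{x_k}$, replacement of $\phi^{(x)}$ by $\mathcal{C}(\alpha)u^\alpha$ via Lemma \ref{lemma-property-phi}, uniform convergence $U_{(\gamma,\theta)}(\cdot;x_k)\to V_{(\gamma,\theta)}$ on $\overline{D}_1$ from \eqref{e7'} plus a finite $\delta$-net, and the continuous mapping theorem applied with \eqref{joint-convergence}. The only differences are cosmetic: you truncate time at $M$, which gives a bounded functional and handles the exponential weights. The paper instead uses uniform integrability of $x_k^{-2}\tau_{x_k}$ and the bound $e^{-\gamma i x^{-2\alpha/(\alpha-1)}}\ge 1-\gamma\tau_x x^{-2\alpha/(\alpha-1)}$.
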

\begin{proof}
	Since $\frac{2\alpha}{\alpha-1}>2$, combining Lemma \ref{lemma-step1} and the invariance principle, for each $\mathbf{y}\in \overline{D}_1$, we get 
	\begin{align}
		\lim_{x\to\infty} G^x(\mathbf{y})= 	\gamma
		\mathbb{E}_{\mathbf{y}}\big( \tau_1^W\big)
		+\theta. 
	\end{align}
	By Proposition \ref{lemma-expression-U},  $U_{(\gamma, \theta)}$ is uniformly bounded. 
	Let $\{x_k\}_{k \geq 0}$ be the sequence in \eqref{limi-of-U}. Combining  Lemma \ref{lemma-property-phi} and the inequality
	$e^{-\gamma ix_k^{-\frac{2\alpha}{\alpha-1}}} \geq 1- \gamma \tau_{x_k} x_k^{-\frac{2\alpha}{\alpha-1}}$
	for all $1\leq i\leq \tau_{x_k}$,
	we obtain that for any $\varepsilon>0$, there exists $N_\varepsilon>0$ such that when $x_k>N_\varepsilon$,
	\begin{align}\label{e8}
		& (1-\varepsilon)\mathcal{C}(\alpha)\sum_{i=1}^{\tau_{x_k}-1}
		\Big(U_{(\gamma, \theta)}\big(\tfrac{\mathbf{S}_i}{x_k};x_k\big)\Big)^\alpha -  \mathcal{C}(\alpha)\gamma \tau_{x_k}^2 x_k^{-\frac{2\alpha}{\alpha-1}}
		\Gamma_{(\gamma, \theta)}^\alpha\nonumber \\
		& \le (1-\varepsilon)\mathcal{C}(\alpha)\sum_{i=1}^{\tau_{x_k}-1}e^{-\gamma ix_k^{-\frac{2\alpha}{\alpha-1}}}
		\Big(U_{(\gamma, \theta)}\big(\tfrac{\mathbf{S}_i}{x_k};x_k\big)\Big)^\alpha \nonumber \\
		& \le  \sum_{i=1}^{\tau_{x_k}-1}e^{-\gamma ix_k^{-\frac{2\alpha}{\alpha-1}}}
		\phi^{(x_k)} \Big(U_{(\gamma, \theta)}\big(\tfrac{\mathbf{S}_i}{x_k};x_k\big)\Big) 
		=: \Upsilon_{x_k}^\mathbf{S}
		\le (1+\varepsilon)\mathcal{C}(\alpha)\sum_{i=1}^{\tau_{x_k}-1}
		\Big(U_{(\gamma, \theta)}\big(\tfrac{\mathbf{S}_i}{x_k};x_k\big)\Big)^\alpha.
	\end{align}
	Now we would like to replace function $U_{(\gamma, \theta)}$ by $V_{(\gamma, \theta)}$.  For each $\varepsilon>0$, we may choose a suitable $\delta>0$ such that \eqref{e7'} holds for all large $k$, which also holds with $U_{(\gamma, \theta)}$ replaced by $V_{(\gamma,\theta)}$.  Since $\overline{D}_1$ is compact, there exists a finite number of real numbers $\{\mathbf{y}_1,...,\mathbf{y}_L\}\subset \overline{D}_1$ such that $\overline{D}_1 = \cup_{j=1}^L (\overline{D}(\mathbf{y}_j, \delta)\cap \overline{D}_1).$ Therefore, assume that $k$ is large enough such that $\sup_{1\leq j\leq L} | V_{(\gamma,\theta)}(\mathbf{y}_j)- U_{(\gamma,\theta)}(\mathbf{y}_j; x_k)| <\varepsilon$, for any $\mathbf{y}\in \overline{D}_1$, suppose that $\mathbf{y}\in \overline{D}(\mathbf{y}_j, \delta)\cap \overline{D}_1$ for some $1\leq j \leq L$, then for large $k$,
	\begin{align}
		\left| V_{(\gamma,\theta)}(\mathbf{y})- U_{(\gamma,\theta)}(\mathbf{y}; x_k)\right| & \leq \left| V_{(\gamma,\theta)}(\mathbf{y})- V_{(\gamma,\theta)}(\mathbf{y}_j)\right| +	\left| V_{(\gamma,\theta)}(\mathbf{y}_j)- U_{(\gamma,\theta)}(\mathbf{y}_j; x_k)\right| \\
		&\qquad + \left| U_{(\gamma,\theta)}(\mathbf{y}_j; x_k)- U_{(\gamma,\theta)}(\mathbf{y}; x_k)\right| \leq 3\varepsilon. 
	\end{align}
	Plugging this back to \eqref{e8} implies that for large $k$,
	\begin{align}\label{e9}
		&	(1-\varepsilon)\mathcal{C}(\alpha)\sum_{i=1}^{\tau_{x_k}-1}
		\Big(\Big(V_{(\gamma,\theta)}\big(\tfrac{\mathbf{S}_i}{x_k}\big) - 3\varepsilon \Big)_+ \Big)^\alpha -  \mathcal{C}(\alpha)\gamma \tau_{x_k}^2 x_k^{-\frac{2\alpha}{\alpha-1}}
		\Gamma_{(\gamma, \theta)}^\alpha \nonumber \\
		& \le  
		\Upsilon_{x_k}^\mathbf{S}
		\le (1+\varepsilon)\mathcal{C}(\alpha)\sum_{i=1}^{\tau_{x_k}-1}
		\Big(V_{(\gamma,\theta)}\big(\tfrac{\mathbf{S}_i}{x_k}\big)+3\varepsilon \Big)^\alpha.
	\end{align}
	Combining \eqref{joint-convergence} and the continuous mapping theorem (see Billingsley \cite[Theorem 5.5]{Billingsley68}), 
	for any $f\in C_b^+(\mathbb{R}^d)$,
	\begin{align}\label{def-mathcal-S}
		\mathcal{S}_k(f):=\frac{1}{x_k^2}\sum_{i=1}^{\tau_{x_k}-1} f\left(\frac{\mathbf{S}_i}{x_k}\right)
		\xLongrightarrow{k\to\infty} 
		\int_{0}^{\tau_{1}^W}f(W_s)\mathrm{d}s,
	\end{align}
	where the convergence 
	holds
	in distribution.
	For a rigorous proof of the above statement, one can repeat the proof of Lemma 3.3 of Hou et al. \cite{HJRS2025} in the continuous-time setting.
	
	To show $\lim_{k\to\infty} \mathbb{E}_{x_k \mathbf{y}}(\mathcal{S}_k(f)) =  \mathbb{E}_{\mathbf{y}}\Big( \int_{0}^{\tau_{1}^W}f(W_s)\mathrm{d}s\Big)$, we need to prove that $(\mathcal{S}_k(f), \mathbb{P}_{x_k \mathbf{y}})$ is uniformly integrable, which 
	is proven in Lemma \ref{lemma-step1} since $\mathcal{S}_k(f)\leq \| f\|_\infty x_k^{-2} \tau_{x_k}$. Moreover, it follows from Lemma \ref{lemma-step1} and the fact $\frac{2\alpha}{\alpha-1}>2$ that 
	$\lim_{k\to\infty} x_k^{-2-\frac{2\alpha}{\alpha-1}}\mathbb{E}_{x_k \mathbf{y}}(\tau_{x_k}^2) =0. $
	Therefore, taking $f= (1-\varepsilon) \mathcal{C}(\alpha)(V_{(\gamma,\theta)}-3\varepsilon)_+^\alpha$ and $f= (1+\varepsilon) \mathcal{C}(\alpha)(V_{(\gamma, \theta)}+3\varepsilon)^\alpha$ respectively, it follows from \eqref{e9} that 
	\begin{align}
		&(1-\varepsilon)\mathcal{C}(\alpha) \mathbb{E}_\mathbf{y}\bigg[ \int_0^{\tau_{1}^W}
		\Big(\Big(V_{(\gamma, \theta)} \big(W_s\big) - 3\varepsilon \Big)_+ \Big)^\alpha \mathrm{d}s \bigg]  
		\le   \liminf_{k\to\infty}
		x_k^{-2}\,
		\mathbb{E}_{x_k \mathbf{y}}
		\big[\Upsilon_{x_k}^\mathbf{S}	\big]
		\\
		& \le   \limsup_{k\to\infty}
		x_k^{-2}\,
		\mathbb{E}_{x_k \mathbf{y}}
		\big[\Upsilon_{x_k}^\mathbf{S}	\big]
		\le  (1+\varepsilon)\mathcal{C}(\alpha)\mathbb{E}_\mathbf{y}\bigg[  \int_0^{\tau_{1}^W}
		\Big(V_{(\gamma, \theta)} \big(W_s\big)+3\varepsilon \Big)^\alpha\mathrm{d}s\bigg].
	\end{align}
	Now taking $\varepsilon \to 0$, we immediately get the desired result. 
\end{proof}

Proposition \ref{lemma-invariance-principle} shows the existence of the solution $V_{(\gamma,\theta)}$ to the equation in Proposition \ref{lemma-invariance-principle}, a natural question is about the uniqueness of the solution. Our next result answers this question. 
\begin{prop}\label{Uniqueness}
	
	(1) The solution $V_{(\gamma,\theta)}$ to 
	the equation \eqref{equivalent-eq-V-eta-theta} is unique. Moreover,
	\begin{align}\label{convegence-U-D1}
		V_{(\gamma,\theta)}(\mathbf{y})=\lim_{x\to\infty}U_{(\gamma,\theta)}(\mathbf{y};x), \quad \forall~\mathbf{y} \in \overline{D}_1,
		~ \gamma, \theta \in [0,\infty).
	\end{align} 
	
	(2) $V_{(\gamma,\theta)}$ is increasing in $\theta$ and $\gamma$ and set
	$V_{(\gamma,\infty)}(\mathbf{y}):=\lim_{\theta \to \infty} V_{(\gamma,\theta)} (\mathbf{y})$. Then 
	\begin{align}\label{expression-V-0-infty}
		V_{(0,\infty)}(\mathbf{y})= \mathbb{N}_\mathbf{y}\big(M^{X,d}\geq 1\big) 
	\end{align}
	is continuous in $\mathbf{y}\in D_1$.

	(3)
	$\lim_{\gamma\downarrow 0} V_{(\gamma,\infty)} (\mathbf{y}) = V_{(0,\infty)} (\mathbf{y}) .$

\end{prop}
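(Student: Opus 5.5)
For uniqueness in (1), let $V,\tilde V$ be two nonnegative bounded solutions of \eqref{equivalent-eq-V-eta-theta}. I will identify each with the Dynkin functional \eqref{eq:Dynkin-Laplace-functional} for $D=\overline{D}_1$, $f\equiv\gamma$, $g\equiv\theta$. Indeed, $\gamma\mathbb{E}_{\mathbf y}(\tau_1^W)+\theta=\mathbb{E}_{\mathbf y}[g(W_{\tau})]+\mathbb{E}_{\mathbf y}[\int_0^{\tau}f(W_s)\mathrm ds]$, so \eqref{equivalent-eq-V-eta-theta} is exactly \eqref{eq:Dynkin-integral-equation} with $\psi(u)=\mathcal C(\alpha)u^\alpha$. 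Dynkin's uniqueness theorem then gives $V=\tilde V=-\log\mathbb{E}_{\delta_{\mathbf y}}[e^{-\theta\langle1,X^{\overline D_1}\rangle-\gamma\langle 1,Y^{\overline D_1}\rangle}]$. If a self-contained argument is preferred, take $w=V-\tilde V$. Then $|w(\mathbf y)|\le \mathcal C(\alpha)\alpha K^{\alpha-1}\mathbb{E}_{\mathbf y}\int_0^{\tau}|w(W_s)|\mathrm ds$ with $K=\sup(V\vee\tilde V)$, so $w$ is a subsolution for $\tfrac12\nabla\cdot\Sigma\nabla+c$ on the ball with zero boundary data. A Gronwall/iteration argument using the exponential moment of $\tau_1^W$ (the Brownian analogue of Lemma \ref{lemma-step1}) fails directly only for large $K$. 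In that case I use the sign structure instead: $\psi$ is increasing, so the comparison principle for $u\mapsto u+\mathbb{E}\int\psi(u)$ gives $w\le0$ and, by symmetry, $w\ge0$.

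Full convergence \eqref{convegence-U-D1} for $\gamma,\theta\in[0,\theta_0]$ is the standard subsequence argument. Every sequence $a_j\to\infty$ has a subsequence along which $U$ converges, and by Proposition \ref{lemma-invariance-principle} the limit solves the equation; uniqueness then forces the same limit. To extend to all $\gamma,\theta\ge0$, which is the main obstacle because $\theta_0$ was chosen so that the contraction constant is $\le 1/2$, I will use the branching property. Replace $\theta$ by $\theta/m$ in the sense of a Markov decomposition: exit from $\overline D_{x}$ through intermediate balls, or more simply use $e^{-\theta k}$ monotone arguments. Concretely, I would redo the equicontinuity estimate with $\Gamma_{(\gamma,\theta)}$ replaced by a local Lipschitz bound. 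For $\Gamma$ large, $2C_\tau C_\phi\Gamma^{\alpha-1}>1/2$, so I work on small balls $\overline D(\mathbf y,\rho)\subset\overline D_1$ of radius $\rho$. There $C_\tau$ scales as $\rho^2$, the contraction factor is below $1/2$, and $U$ satisfies the analogous equation with boundary data $U$ itself. Patching the small-ball estimates gives equicontinuity, after which the limiting argument and uniqueness proceed as before.

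For (2), monotonicity in $\gamma,\theta$ is inherited from $U$, since $u_{(\gamma,\theta)}$ is increasing in both parameters. Letting $\theta\to\infty$ with $\gamma=0$ gives $-\log\mathbb{P}_{\delta_{\mathbf y}}(X^{\overline D_1}=0)$. By \eqref{N-measures}-type identities for exit measures this equals $\mathbb{N}_{\mathbf y}(\langle1,X^{\overline D_1}\rangle>0)$, and the event $\{X^{\overline D_1}\neq0\}$ coincides $\mathbb{N}_{\mathbf y}$-a.e. with $\{M^{X,d}\ge1\}$ by continuity of Brownian paths and the range characterization. Finiteness for $\mathbf y\in D_1$ follows from the interior bound, for instance the Keller–Osserman bound $V\le C\,\mathrm{dist}(\mathbf y,\partial D_1)^{-2/(\alpha-1)}$ obtained by comparison with explicit boundary-blowup solutions. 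Continuity follows because $V_{(0,\infty)}$ is an increasing limit of continuous solutions that are locally uniformly bounded, and the local equicontinuity estimate above is uniform in $\theta$ on compact subsets of $D_1$.

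For (3), monotone convergence applies. $V_{(\gamma,\infty)}$ decreases as $\gamma\downarrow0$ and is bounded below by $V_{(0,\infty)}$. In the functional representation, $-\log\mathbb{E}[e^{-\gamma\langle1,Y\rangle};X^{\overline D_1}=0]\to-\log\mathbb{P}(X^{\overline D_1}=0)$ by dominated convergence, since $\langle1,Y\rangle<\infty$ a.s.
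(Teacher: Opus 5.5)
Your uniqueness argument (identifying \eqref{equivalent-eq-V-eta-theta} with Dynkin's equation for $D=\overline{D}_1$, $f\equiv\gamma$, $g\equiv\theta$) matches the paper. So do the subsequence argument giving \eqref{convegence-U-D1} for $\gamma,\theta\in[0,\theta_0]$ and your proof of (3) by dominated convergence.

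\textbf{The gap: extending \eqref{convegence-U-D1} to $\gamma,\theta>\theta_0$.} This is the step you yourself flag as the main obstacle, and your small-ball equicontinuity scheme does not close. Compare $U_{(\gamma,\theta)}(\mathbf{a};x)$ and $U_{(\gamma,\theta)}(\mathbf{a}+\mathbf{z};x)$ through translated balls of radius $\rho$ with exit time $\sigma$. The local equation then contains the boundary term
\[
\mathbb{E}_{x\mathbf{a}}\Big[e^{-\gamma x^{-\frac{2\alpha}{\alpha-1}}\sigma}\Big(U_{(\gamma,\theta)}\big(\tfrac{\mathbf{S}_\sigma}{x};x\big)-U_{(\gamma,\theta)}\big(\tfrac{\mathbf{S}_\sigma}{x}+\mathbf{z};x\big)\Big)\Big].
\]
This can only be bounded by the oscillation $G(\mathbf{z};x)$ times a factor tending to $1$: since $\sigma=O(\rho^2x^2)$, the discount is $1-o(1)$. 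Shrinking $\rho$ makes the nonlinear contribution $O(\rho^2\Gamma_{(\gamma,\theta)}^{\alpha-1})\,G(\mathbf{z};x)$ small. But the total coefficient of $G(\mathbf{z};x)$ on the right stays at least about $1$, so $G\le c\,G+\text{small}$ gives nothing.

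The paper's estimate \eqref{upper-bound-sup-difference-U} works only because it is global. The boundary data on $\partial D_1$ are the explicit quantities in $G^x$, not $U$ itself, so $U$ enters only through the nonlinearity. Its coefficient $2C_\tau C_\phi\Gamma_{(\gamma,\theta)}^{\alpha-1}$ is at most $1/2$ only for small parameters. Balls contained in $\overline{D}_1$ also degenerate near $\partial D_1$. Without equicontinuity you cannot replace $U_{(\gamma,\theta)}(\mathbf{S}_i/x_k;x_k)$ by $V_{(\gamma,\theta)}(\mathbf{S}_i/x_k)$ as in the proof of Proposition \ref{lemma-invariance-principle}. So subsequential limits for large parameters are not identified.

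\textbf{Knock-on effect in (2).} Your continuity argument for $V_{(0,\infty)}$ rests on the same unproved ``local equicontinuity uniform in $\theta$''. The constants in the paper's estimate in fact blow up with $\Gamma_{(\gamma,\theta)}$. You would need either interior elliptic estimates for the limiting semilinear equation driven by a local sup bound (e.g.\ Keller--Osserman), or what the paper does: note that $V_{(0,\infty)}$ solves the boundary blow-up problem \eqref{PDE} and apply V\'eron's uniqueness and continuity theorem.

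\textbf{The missing idea.} It is a soft probabilistic argument that needs no estimate at large parameters. Start the walk from $n_x:=\lfloor x^{\frac{2}{\alpha-1}}\rfloor$ particles at $x\mathbf{y}$. By the branching property, for $\gamma,\theta\in[0,\theta_0]$,
\[
\mathbb{E}_{n_x\delta_{x\mathbf{y}}}\Big(e^{-\gamma x^{-\frac{2\alpha}{\alpha-1}}\zeta_x-\theta x^{-\frac{2}{\alpha-1}}\langle 1,Z_{\overline{D}_x}\rangle}\Big)=\Big(1-x^{-\frac{2}{\alpha-1}}U_{(\gamma,\theta)}(\mathbf{y};x)\Big)^{n_x}\longrightarrow e^{-V_{(\gamma,\theta)}(\mathbf{y})}.
\]
By \eqref{expression-V-eta-theta}, the limit is the Laplace transform of $\big(\int_0^\infty\langle 1,X_t^{D_1}\rangle\mathrm{d}t,\langle 1,X^{\overline{D}_1}\rangle\big)$ under $\mathbb{P}_{\delta_{\mathbf{y}}}$. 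For probability laws on $\mathbb{R}_+^2$, convergence of Laplace transforms on $[0,\theta_0]^2$ already gives convergence in distribution (Cram\'er--Wold plus Kallenberg's exercise). Hence the Laplace transforms converge at every $(\gamma,\theta)\in[0,\infty)^2$. Taking logarithms and using the bound \eqref{def-Gamma} then gives $U_{(\gamma,\theta)}(\mathbf{y};x)\to V_{(\gamma,\theta)}(\mathbf{y})$ for all $\gamma,\theta\ge0$, with $V$ defined by \eqref{expression-V-eta-theta}. This is how the paper completes (1).
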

\begin{proof}
	(1) Taking $D=\overline{D}_1, g=\theta$ and  $f= \gamma$ in \eqref{eq:Dynkin-integral-equation}, the equation \eqref{eq:Dynkin-integral-equation} reduces to the equation in Proposition \ref{lemma-invariance-principle}. Therefore, we have $V_{(\gamma,\theta)}(\mathbf y)=V_{f,g}^{\overline{D}_1}(\mathbf y)$ for all $\mathbf y\in \overline{D}_1$.
	Applying \eqref{eq:Dynkin-Laplace-functional} and \eqref{expression-of-Y} yields,  for any $\mathbf{y} \in \overline{D}_1$ and $\gamma, \theta \ge 0$,
	\begin{align}\label{expression-V-eta-theta}
		V_{(\gamma,\theta)}(\mathbf y)
		=&-\log \mathbb E_{\delta_{\mathbf y}}
		\Big[\exp\Big\{- \theta \langle 1, X^{\overline{D}_1} \rangle-\gamma \int_0^\infty \langle 1, X_t^{D_1}\rangle \mathrm{d}t \Big\}\Big].
	\end{align}
	Now the uniqueness implies that if there exists a subsequence $\{x_k\}_{k\geq 0} \subset [1,\infty)$ with $\lim_{k\to\infty} x_k=\infty$ such that $\lim_{k\to\infty} U_{(\gamma,\theta)}(\mathbf{y};x_k)$ exists for any $\mathbf{y}\in \overline{D}_1$, then the limit must be the same with representation given by \eqref{expression-V-eta-theta}, which implies that 
	\begin{align}
		V_{(\gamma,\theta)}(\mathbf{y})=\lim_{x\to\infty}U_{(\gamma,\theta)}(\mathbf{y};x), \quad \forall~\mathbf{y} \in \overline{D}_1,~\gamma,\theta \in [0,\theta_0].
	\end{align}
	Together with the branching property, we see that for any $\gamma, \theta \in [0,\theta_0]$ and $\mathbf{y} \in \overline{D}_1$,
	\begin{align}\label{eq-convergence}
		\lim_{x\to\infty} \mathbb{E}_{\lfloor x^{\frac{2}{\alpha-1}} \rfloor\delta_{x \mathbf{y}}}\Big(e^{-\gamma x^{-\frac{2\alpha}{\alpha-1}} \zeta_x-\theta x^{-\frac{2}{\alpha-1}} \langle 1, Z_{\overline{D}_x}\rangle} \Big)
		= \mathbb E_{\delta_{\mathbf y}}
		\Big[e^{- \theta \langle 1, X^{\overline{D}_1} \rangle-\gamma \int_0^\infty \langle 1, X_t^{D_1}\rangle \mathrm{d}t }\Big].
	\end{align}
	This combined with Cram\'{e}r--Wold theorem and Kallenberg \cite[exercise 9, p101]{Kallenberg} yields that 
	\begin{align}
		\bigg(\Big( \frac{\zeta_x}{x^{\frac{2\alpha}{\alpha-1}}}, \frac{\langle 1, Z_{\overline{D}_x}\rangle}{x^{\frac{2}{\alpha-1}}}\Big), \  \mathbb{P}_{\lfloor x^{\frac{2}{\alpha-1}} \rfloor \delta_{x \mathbf{y}}}\bigg)
		\xRightarrow[
		x \to \infty
		]{d}  \bigg( \Big(   \int_0^\infty \langle 1, X_t^{D_1}\rangle \mathrm{d}t , \langle 1, X^{\overline{D}_1} \rangle\Big),  \  \mathbb{P}_{\delta_{\mathbf{y}}}\bigg).
	\end{align}
	Thus, \eqref{eq-convergence} holds for any $\gamma, \theta \ge 0$, which together with the branching property implies (1).

	(3)
	For each $\varepsilon\in (0,1)$, 
	fix any $f \in C_b^+({\mathbb{R}^d})$
	such that $f(\mathbf{a})>0$ if and only if $ \| \mathbf{a}\|> 1-\varepsilon $.
	From Li \cite[Theorems 5.15 and 8.27]{LZbook}, we see that for any $\kappa>0$ and $\mathbf{y}\in \overline{D}_1$,
	\begin{align}\label{Appl-N-measure}
		-\log \mathbb{E}_{\delta_{\mathbf{y}}}\Big(\exp\Big\{-\kappa \int_0^\infty \langle f, X_t\rangle \mathrm{d}t\Big\}\Big)= \mathbb{N}_{\mathbf{y}}\Big(1-\exp\Big\{-\kappa \int_0^\infty \langle f, w_t\rangle \mathrm{d}t\Big\}\Big).
	\end{align}
	Taking $\kappa \uparrow \infty$ in the above equality and noticing that $\langle f, X_t\rangle $ and $\langle f, w_t\rangle$ are almost everywhere right-continuous under $\mathbb{P}_{\delta_\mathbf{y}}$ and $\mathbb N_\mathbf{y}$ 
	respectively,
	we obtain that 
	\begin{align}
		-\log \mathbb{P}_{\delta_{\mathbf{y}}}\big( X_t((\overline{D}_{1-\varepsilon})^c)=0, \forall t>0 \big)= \mathbb{N}_{\mathbf{y}}\big( \exists t>0, \ w_t((\overline{D}_{1-\varepsilon})^c)>0\big).
	\end{align}
	With a slight abuse of notation, we use the same notation $M^{X,d}$ under $\mathbb{P}_{\delta_\mathbf{y}}$ as that in \eqref{Def-of-maximal} with $w$ replaced by $X$, then we conclude from the above equation that 
	\begin{align}\label{e13}
		-\log \mathbb{P}_{\delta_{\mathbf{y}}}\big( M^{X,d}\leq 1-\varepsilon \big)= \mathbb{N}_{\mathbf{y}}\big( M^{X,d} >1-\varepsilon\big).
	\end{align}
	The monotonicity property of $V_{(\gamma,\theta)}$ is obvious by \eqref{expression-V-eta-theta}. Therefore, taking $\varepsilon\to 0$ in the above equation and together with the definition of the exit measure, we have
	\begin{align}
		V_{(0,\infty)}(\mathbf{y}) & =\lim_{\theta\uparrow +\infty} V_{(0,\theta)} (\mathbf{y})=-\log \mathbb P_{\delta_{\mathbf y}}
		\big(  \langle 1, X^{\overline{D}_1} \rangle =0\big)	=-\log \mathbb{P}_{\delta_{\mathbf{y}}}\big( M^{X,d}<1 \big)= \mathbb{N}_{\mathbf{y}}\big( M^{X,d} \geq 1\big).
	\end{align}
	Combining  $\mathbb{P}_{\delta_\mathbf{y}}\big(\int_0^\infty \langle 1, X_t^{D_1}\rangle \mathrm{d}t <\infty \big) =1$,
	dominated convergence theorem and \eqref{expression-V-eta-theta},
	\begin{align}
		\lim_{\gamma \to 0} V_{(\gamma, \infty)}(\mathbf{y})& = -\log \mathbb P_{\delta_{\mathbf y}}
		\Big( \langle 1, X^{\overline{D}_1} \rangle=0, \int_0^\infty \langle 1, X_t^{D_1}\rangle \mathrm{d}t <\infty\Big)\\ 
		&= -\log \mathbb P_{\delta_{\mathbf y}}
		\big( \langle 1, X^{\overline{D}_1} \rangle=0\big)
		= V_{(0, \infty)}(\mathbf{y}),
	\end{align}
	which implies 
	(3).

	(2) For the continuity of $V_{(0,\infty)}$,  
	let $\mathcal{L}$ be the generator of the Brownian motion $W$.
	According to Dynkin \cite[Theorems 1.2 and 1.3]{Dynkin91}, $V_{(0,\infty)}$ solves equation
	\begin{align}\label{PDE}
		-\mathcal{L} v + \mathcal{C}(\alpha) v^\alpha = 0 \quad \mbox{in}\  D_1,\quad 
		\lim_{\mathbf{y}\to \mathbf{a}\in \partial D_1}
		v(\mathbf{y})=\infty. 
	\end{align}
	Since the uniqueness for the solution to the above PDE and the continuity of the solution is proven in V\'eron \cite[Theorem 1.1]{Veron}, we complete the proof of (2).
\end{proof}

\subsection{Proof of Theorem \ref{thm-behavior-all-time-maximum}}\label{S4.2}

\begin{proof}[Proof of Theorem \ref{thm-behavior-all-time-maximum}]
	\emph{Step 1.} In this step, we prove that for any $\mathbf{y}\in D_1$,
	\begin{align}\label{Goal3}
		\liminf_{x\to\infty}x^{\frac{2}{\alpha-1}}\mathbb{P}_{\delta_{x\mathbf{y}}}(M^d\geq x)\ge 	\liminf_{x\to\infty}x^{\frac{2}{\alpha-1}}\mathbb{P}_{\delta_{x\mathbf{y}}}(M^d> x)\ge \mathbb{N}_\mathbf{y}\big(M^{X,d}\geq 1\big).
	\end{align}
	The first inequallity holds trivially, we only prove the second one here. 
	Combining inequality $1_{\{|z|>0\}}\ge 1-e^{-\theta |z|}$ for any $\theta>0$, $z\in \mathbb{R}$, 	  and the fact that $\mathbb{P}_{\delta_\mathbf{y}}(M^d>x)= \mathbb{P}_{\delta_\mathbf{y}}(\langle 1,Z_{\overline{D}_x}\rangle>0)$,  for any $\theta>0$ and $\mathbf{y}\in D_1$,
	\begin{align}
		x^{\frac{2}{\alpha-1}}\mathbb{P}_{\delta_{x\mathbf{y}}}(M^d> x)& =  x^{\frac{2}{\alpha-1}} \mathbb{P}_{\delta_{x\mathbf{y}}}(\langle 1,Z_{\overline{D}_x}\rangle>0)  \ge x^{\frac{2}{\alpha-1}}  \mathbb{E}_{\delta_{x\mathbf{y}}}\Big(1-e^{- \theta x^{-\frac{2}{\alpha-1}} \left\langle 1,Z_{\overline{D}_x}\right\rangle }\Big).
	\end{align}
	Combining Proposition \ref{Uniqueness} and the above inequality, we obtain that 
	\begin{align}
		&\liminf_{x\to\infty}x^{\frac{2}{\alpha-1}}\mathbb{P}_{\delta_{x\mathbf{y}}}(M^d> x) 
		\ge \liminf_{\theta\to\infty} \liminf_{x\to\infty}  x^{\frac{2}{\alpha-1}}  \mathbb{E}_{\delta_{x\mathbf{y}}}\Big(1-e^{- \theta x^{-\frac{2}{\alpha-1}} \left\langle 1,Z_{\overline{D}_x}\right\rangle }\Big) \\
		&=\liminf_{\theta\to\infty} \liminf_{x\to\infty}U_{(0, \theta)}(\mathbf{y};x)
		=\lim_{\theta\to\infty}V_{(0, \theta)}(\mathbf{y})=\mathbb{N}_\mathbf{y}\big(M^{X,d}\geq 1\big).
	\end{align}
	This gives \eqref{Goal3}. 
	
	\emph{Step 2.} In this step, we prove the following upper bound. For any $\mathbf{y}\in D_1$, 
	\begin{align}\label{Goal4}
		\limsup_{x\to\infty}x^{\frac{2}{\alpha-1}}\mathbb{P}_{\delta_{x\mathbf{y}}}(M^d> x)\leq \limsup_{x\to\infty}x^{\frac{2}{\alpha-1}}\mathbb{P}_{\delta_{x\mathbf{y}}}(M^d\geq x)\le \mathbb{N}_\mathbf{y}\big(M^{X,d}\geq 1\big).
	\end{align}
	The first inequality holds trivally, so we prove the second one here. 
	For any $\varepsilon  \in (0, \frac{1}{4} (1-\| \mathbf{y}\|) )$, since
	\[
	\big\{\langle 1,Z_{\overline{D}_{x(1-\varepsilon/4)}}\rangle =0\big\} = \{
	M^d \leq 
	x(1-\varepsilon/4)\}\subset  \{M^d<x\},
	\] 
	by the strong Markov property of $\{Z_{\overline{D}_x}\}_{x\ge 0}$, we get 
	\begin{align}\label{condition-expectation}
		&  \mathbb{E}_{\delta_{x\mathbf{y}}}\Big(1_{\left\{ M^d<x \right\}}\big|Z_{\overline{D}_{x(1-\varepsilon)}}\Big)\geq \mathbb{E}_{\delta_{x\mathbf{y}}}\Big(1_{\big\{\langle 1,Z_{\overline{D}_{x(1-\varepsilon/4)}}\rangle =0\big\}}\Big|Z_{\overline{D}_{x(1-\varepsilon)}}\Big) \nonumber \\
		&\quad \ge  1_{\big\{Z_{\overline{D}_{x(1-\varepsilon)}}\left(D_{x(1-\varepsilon/2)}^c\right)=0\big\}}
		\times \exp\Big\{
		\int_{\overline{D}_{x(1-\varepsilon)}^c}
		\log \mathbb{P}_{\delta_{\mathbf{a}}}\big( \langle 1,Z_{\overline{D}_{x(1-\varepsilon/4)}} \rangle=0\big) Z_{\overline{D}_{x(1-\varepsilon)}}(\mathrm{d} \mathbf{a}) \ \Big\}.
	\end{align}
	Since on the event $\big\{Z_{\overline{D}_{x(1-\varepsilon)}}\big(D_{x(1-\varepsilon/2)}^{c}\big)=0\big\}$, we have $x(1-\varepsilon)\le \|V(u)\|\le x(1-\varepsilon/2)$ for every $u\in \mathcal{T}_{\overline{D}_x}$. Therefore, it follows that 
	\begin{align}\label{lower-bound-1}
		&\mathbb{P}_{\delta_{V(u)}}\big( \langle 1,Z_{\overline{D}_{x(1-\varepsilon/4)}} \rangle=0\big)  
		=\mathbb{P}_{\delta_{V(u)}}\big(M^d\le x(1-\varepsilon/4) \big)
		\ge \mathbb{P}_{\delta_{V(u)}}\big(M^d< x(1-\varepsilon/4) \big)\nonumber \\
		&\quad = 1-\mathbb{P}_{\delta_{V(u)}}\big(M^d\geq x(1-\varepsilon/4)\big)
		=1-\mathbb{P}_{\delta_0}\Big(\sup_{n\ge 0}\max_{|w|=n}\|V(w)+\mathbf{a}\|\geq x(1-\varepsilon/4) \Big)\Big|_{\mathbf{a}=V(u)}\nonumber \\
		&\quad \ge 1-\mathbb{P}\big(M^d\geq x(1-\varepsilon/4)-\|\mathbf{a}\|\big)\big|_{\mathbf{a}=V(u)}
		\ge 1-\mathbb{P}\big(M^d>
		\varepsilon x/4\big).
	\end{align}
	Moreover, by \eqref{rough-behavior-M}, for $x$ large enough there exists a constant $c_*$ such that 
	\begin{align}
		\mathbb{P}\big(M^d\geq
		\varepsilon x/4\big)
		\le \frac{4^{\frac{2}{\alpha-1}}c_*}{\varepsilon^{\frac{2}{\alpha-1}} x^{\frac{2}{\alpha-1}}}=: \frac{C_*(\varepsilon) }{ x^{\frac{2}{\alpha-1}}}.
	\end{align}
	Combining this with \eqref{lower-bound-1}, we get that for $x$ large enough, 
	\begin{align}
		1\geq \mathbb{P}_{\delta_{V(u)}}\big(\langle 1,Z_{\overline{D}_{x(1-\varepsilon/4)}} \rangle=0 \big)
		\ge 1-  C_*(\varepsilon) x^{-\frac{2}{\alpha-1}} \quad  \text{on} \ \big\{Z_{\overline{D}_{x(1-\varepsilon)}}\big( D_{x(1-\varepsilon/2)}^{c}\big)=0\big\}.
	\end{align}
	Moreover, since $\lim_{z\to 1}\frac{\log z}{1-z}=-1$, it follows that for some $\delta>0$, when $1-z<\delta$, we have $\frac{\log z}{1-z}\ge -2$. Thus, for $x$ sufficiently large  such that $C_*(\varepsilon)/ x^{\frac{2}{\alpha-1}}<\delta$, we obtain that  for each $u\in \mathcal{T}_{\overline{D}_x}$, 
	\begin{align}
		\log \mathbb{P}_{\delta_{V(u)}}\big( \langle 1,Z_{\overline{D}_{x(1-\varepsilon/4)}} \rangle=0\big)  
		\ge \log \Big(  1-\frac{C_*(\varepsilon)}{x^{\frac{2}{\alpha-1}}}\Big)
		\ge -\frac{2C_*(\varepsilon)}{x^{\frac{2}{\alpha-1}}},\quad  \text{on} \ \big\{Z_{\overline{D}_{x(1-\varepsilon)}}\big(D_{x(1-\varepsilon/2)}^{c}\big)=0\big\}.
	\end{align}
	This combined with \eqref{condition-expectation} gives that 
	\begin{align}
		&\mathbb{E}_{\delta_{x\mathbf{y}}}\Big(1_{\left\{ M^d<x \right\}}\big|Z_{\overline{D}_{x(1-\varepsilon)}}\Big) 
		\ge 1_{\big\{Z_{\overline{D}_{x(1-\varepsilon)}}\left(D_{x(1-\varepsilon/2)}^c\right)=0\big\}}
		\times \exp\Big\{ -\frac{2C_*(\varepsilon)}{  x^{\frac{2}{\alpha-1}}}\langle 1 , Z_{\overline{D}_{x(1-\varepsilon)}} \rangle \Big\}\\
		&\ge \exp\Big\{ -\frac{2C_*(\varepsilon)}{  x^{\frac{2}{\alpha-1}}}\langle 1 , Z_{\overline{D}_{x(1-\varepsilon)}} \rangle \Big\}
		-1_{\big\{Z_{\overline{D}_{x(1-\varepsilon)}}\left(D_{x(1-\varepsilon/2)}^c\right)>0\big\}}.
	\end{align}
	Taking expectation in the above inequality implies that 
	\begin{align}
		\mathbb{P}_{\delta_{x\mathbf{y}}} (M^d<x)
		\ge &  \mathbb{E}_{\delta_{x\mathbf{y}}}\Big[\exp\Big\{ -\frac{2C_*(\varepsilon)}{  x^{\frac{2}{\alpha-1}}}\langle 1 , Z_{\overline{D}_{x(1-\varepsilon)}} \rangle \Big\}\Big]-\mathbb{P}_{\delta_{x\mathbf{y}}}\Big(Z_{\overline{D}_{x(1-\varepsilon)}}\big(D_{x(1-\varepsilon/2)}^c\big)>0\Big)\\
		=&\mathbb{E}_{\delta_{x\mathbf{y}}}\Big[\exp\Big\{ -\frac{2C_*(\varepsilon)}{ x^{\frac{2}{\alpha-1}}}\langle 1 , Z_{\overline{D}_{x(1-\varepsilon)}} \rangle \Big\}\Big]-\mathbb{P}_{\delta_{x\mathbf{y}}}\Big(Z_{\overline{D}_{x(1-\varepsilon)}}\big(D_{x(1-\varepsilon/2)}^c\big)\ge 1\Big).
	\end{align}
	Therefore, set $\theta_1:=2C_*(\varepsilon)(1-\varepsilon)^{\frac{2}{\alpha-1}} $,  by Markov inequality, 
	\begin{align}\label{upp-m-d-x}
		\ \ &x^{\frac{2}{\alpha-1}} \mathbb{P}_{\delta_{x\mathbf{y}}}(M^d\geq x)
		=x^{\frac{2}{\alpha-1}} \big(1-\mathbb{P}_{\delta_{x\mathbf{y}}}(M^d< x)\big)\nonumber \\
		&\le \frac{1}{(1-\varepsilon)^\frac{2}{\alpha-1}}U_{(0,\theta_1)}\Big(\frac{1}{1-\varepsilon}\mathbf{y}; x(1-\varepsilon)\Big)
		+x^{\frac{2}{\alpha-1}} \mathbb{P}_{\delta_{x\mathbf{y}}}\Big(Z_{\overline{D}_{x(1-\varepsilon)}}\big(D_{x(1-\varepsilon/2)}^c\big)\ge 1\Big)\nonumber \\
		&\le \frac{1}{(1-\varepsilon)^\frac{2}{\alpha-1}}U_{(0,\theta_1)}\Big(\frac{1}{1-\varepsilon}\mathbf{y}; x(1-\varepsilon)\Big)
		+x^{\frac{2}{\alpha-1}} \mathbb{E}_{\delta_{x\mathbf{y}}}\Big(Z_{\overline{D}_{x(1-\varepsilon)}}\big(D_{x(1-\varepsilon/2)}^c\big)\Big).
	\end{align}
	For the first term, by Proposition  \ref{Uniqueness},  we have 
	\begin{align}\label{asymptotic-behavior-I1}
		\lim_{x\to\infty}\frac{1}{(1-\varepsilon)^\frac{2}{\alpha-1}}U_{(0,\theta_1)}\Big(\frac{1}{1-\varepsilon}\mathbf{y}; x(1-\varepsilon)\Big)
		=\frac{ V_{(0,\theta_1)}\left(\frac{\mathbf{y}}{1-\varepsilon} \right)  }{(1-\varepsilon)^\frac{2}{\alpha-1}}
		\leq \frac{V_{(0,\infty)}\left(\frac{\mathbf{y}}{1-\varepsilon} \right)}{(1-\varepsilon)^\frac{2}{\alpha-1}}.
	\end{align}
	For the second term on the right hand side of \eqref{upp-m-d-x},  since $\frac{1}{1-\varepsilon} \mathbf{y}\in D_1$, combining many-to-one formula and Lemma \ref{Tightness}, we have
	\begin{align}\label{asymptotic-behavior-I2}
		&\limsup_{x\to\infty}x^{\frac{2}{\alpha-1}} \mathbb{E}_{\delta_{x\mathbf{y}}}\Big(Z_{\overline{D}_{x(1-\varepsilon)}}\big(D_{x(1-\varepsilon/2)}^c\big)\Big)\nonumber \\
		&= \limsup_{x\to\infty}x^{\frac{2}{\alpha-1}} \mathbb{P}_{x\mathbf{y}}\big(\|\mathbf{S}_{\tau_{x(1-\varepsilon)}}\|\geq x(1-\varepsilon/2)\big) \nonumber\\
		& \le  \limsup_{x\to\infty}x^{\frac{2}{\alpha-1}} \mathbb{P}_{x\mathbf{y}}\big(\|\mathbf{S}_{\tau_{x(1-\varepsilon)}}\|> x(1-\varepsilon/2)-1\big) \nonumber\\
		&\le 4 C_\tau (1-\varepsilon)^2 \limsup_{x\to\infty} x^{\frac{2\alpha}{\alpha-1}} \mathbb{P}\Big(\|\mathbf{X}\|> \frac{\varepsilon}{2}x-1\Big)=0,
	\end{align}
	where the last equality follows from our assumption $\mathbb{E}\big(\|\mathbf{X}\|^{\frac{2\alpha}{\alpha-1}}\big)<\infty$.
	Combining Proposition \ref{Uniqueness}, \eqref{asymptotic-behavior-I1} and \eqref{asymptotic-behavior-I2}, we obtain 
	\begin{align}
		\limsup_{x\to\infty} x^{\frac{2}{\alpha-1}} \mathbb{P}_{\delta_{x\mathbf{y}}}(M^d>x)
		& \le \frac{1}{(1-\varepsilon)^\frac{2}{\alpha-1}} V_{(0,\infty)}\Big(\frac{1}{1-\varepsilon}\mathbf{y} \Big)  \stackrel{\varepsilon \downarrow 0}{\longrightarrow}V_{(0,\infty)}\left(\mathbf{y} \right)
		=\mathbb{N}_\mathbf{y}\big(M^{X,d}\geq 1\big). 
	\end{align}
	This gives the  \eqref{Goal4}. 
	Combining \eqref{Goal3} and \eqref{Goal4}, we complete the proof of the theorem.
\end{proof}

\subsection{Proof of Theorem \ref{thm-behavior-Mn} and Corollary \ref{Cor}}\label{S4.3}

The classical Watanabe-type invariance principle 
for critical branching random walks states that (for a formal proof, see \cite[Theorem 1.2]{HRS2026}  with $h=0$ in $1$-dimensional continuous-time case and see \cite[Proposition 3.6]{HRS2025}  for $1$-dimensional branching killed L\'evy process)
for any $r>0$ and any 
$f\in C_b^+(\mathbb{R}^d)$,
\begin{align}\label{appl-invariance-principle}
	& \quad \lim_{n\to\infty} n^{\frac{1}{\alpha-1}} \left(1- \mathbb{E}_{\delta_{\sqrt{n}\mathbf{y}}} \left(\exp\left\{- \frac{1}{n^{\frac{1}{\alpha-1}}}\int f\Big(\frac{\mathbf{z}}{\sqrt{n}}\Big)Z_{\lfloor nr \rfloor}(\mathrm{d}\mathbf{z})\right\}\right)\right) = -\log \mathbb{E}_{\delta_{\mathbf{y}}}\Big(e^{-\langle f, X_r\rangle}\Big). 
\end{align}
Also, from \eqref{appl-invariance-principle}, we have the following scaling property: for any $t> \kappa>0$ and  
$f\in C_b^+(\mathbb{R}^d)$,
\begin{align}\label{scaling-property}
	\mathbb{E}_{\delta_{\mathbf{y}}}\Big(e^{-\langle f, X_{t\kappa}\rangle}\Big) = \mathbb{E}_{ \kappa^{-\frac{1}{\alpha-1}}\delta_{\mathbf{y}/\sqrt{\kappa}}}\Big( \exp\Big\{-\kappa^{\frac{1}{\alpha-1}} \int f(\sqrt{\kappa} \mathbf{z})X_t(\mathrm{d}\mathbf{z}) \Big\}\Big). 
\end{align}

With the help of the above properties, we are ready to adapt the ideas in \cite[Theorem 1.7]{HRS2025}  and prove Theorem \ref{thm-behavior-Mn}.

\begin{proof}[Proof of Theorem \ref{thm-behavior-Mn}]
	\emph{Step 1.} In this step, we show that
	$r> 0$, 
	\begin{align}\label{Goal1}
		\liminf_{n\to\infty} n^{\frac{1}{\alpha-1}} \mathbb{P}_{\delta_{\sqrt{n}\mathbf{y}}}\big(M_n^d >\sqrt{n}r\big)
		\ge \mathbb{N}_\mathbf{y}\big( M_1^{X} > r\big).
	\end{align}
	Define  
	$f(\mathbf{x}):= \min\{(\| \mathbf{x}\|-r)_+,1\}$,
	then $f\in C_b^+(\mathbb{R}^d)$
	and that for any non-negative measure $\nu$, $\langle f, \nu\rangle=0$ is equivalent to $\nu ((\overline{D}_r)^c)=0$.  Combining the inequality $1_{\{|z|>0\}}\geq 1-e^{-\theta |z|}$ and \eqref{appl-invariance-principle}, for any $\theta>0$, 
	\begin{align}\label{equality-Mn}
		&\liminf_{n\to\infty} n^{\frac{1}{\alpha-1}} \mathbb{P}_{\delta_{\sqrt{n}\mathbf{y}}}\big(M_n^d > \sqrt{n}r\big) 
		= \liminf_{n\to\infty}n^{\frac{1}{\alpha-1}} \mathbb{P}_{\delta_{\sqrt{n}\mathbf{y}}}\Big( \int_{\mathbb{R}^d} f(\mathbf{z}/\sqrt{n}) Z_n(\mathrm{d}\mathbf{z}) >0\Big)\\
		& \geq  \lim_{n\to\infty}n^{\frac{1}{\alpha-1}}\Big[1-\mathbb{E}_{\delta_{\sqrt{n}\mathbf{y}}}\Big(e^{-\theta n^{-\frac{1}{\alpha-1}}\int_{\mathbb{R}^d} f(\mathbf{z}/\sqrt{n}) Z_n(\mathrm{d}\mathbf{z}) }\Big) \Big]\\
		& =  -\log \mathbb{E}_{\delta_{\mathbf{y}}}\big(e^{-\theta \langle f, X_1\rangle}\big) 
		=  \mathbb{N}_\mathbf{y}\big(1-e^{-\theta \langle f, w_1\rangle}\big),
	\end{align}
	where in the last equality we used \eqref{N-measures}. Taking $\theta\to\infty$ in the above inequality  implies \eqref{Goal1}.
	
	\emph{Step 2.} In this step, we prove the upper bound: for any 
	$r> 0$, we have 
	\begin{align}\label{Goal2}
		\limsup_{n\to\infty} n^{\frac{1}{\alpha-1}} \mathbb{P}_{\delta_{\sqrt{n}\mathbf{y}}}\big(M_n^d > \sqrt{n}r\big)
		\le \mathbb{N}_\mathbf{y}\big( M_1^{X}>  r\big).
	\end{align}
	For each fixed $s\in (0,1)$, define $m_n:= \lfloor s n \rfloor$ and $k_n:= n-m_n$, according to the branching property, 
	\begin{align}\label{equality-Mn-1}
		& \mathbb{P}_{\delta_{\sqrt{n}\mathbf{y}}}\big(M_n^d > \sqrt{n}r\big)
		= 1-\mathbb{E}_{\delta_{\sqrt{n}\mathbf{y}}}\big( \mathbb{P}_{\delta_{\sqrt{n}\mathbf{y}}}(M_n^d \leq  \sqrt{n}r |\mathcal{F}_{k_n})\big) \nonumber \\
		& =  1- \mathbb{E}_{\delta_{\sqrt{n}\mathbf{y}}}\Big( \exp\Big\{ \sum_{|u|= k_n} \log \Big(1- \mathbb{P}_{\delta_{V(u)}}\left(M_{m_n}^d> \sqrt{n}r\right)\Big) \Big\} \Big)  .
	\end{align}
	Since for any $\mathbf{x}\in \mathbb{R}^d$, there exists a constant 
	$c_*$
	such that 
	(for example, see \cite[Lemma 2]{Slack68})
	\begin{align}\label{upper-bound-p-n}
		\mathbb{P}_{\delta_{\mathbf{x}}}\big(M_{m_n}^d> \sqrt{n}r\big)
		\le \mathbb{P}_{\delta_{\mathbf{x}}}\big(Z_{m_n} (\mathbb{R}^d)>0 \big)
		\le \frac{
			c_*
		}{m_n^{\frac{1}{\alpha-1}}}.
	\end{align}
	Therefore, uniformly for all $\mathbf{x}$, when $n$ is large enough, we have
	\[
	\log \Big(1- \mathbb{P}_{\delta_{\mathbf{x}}}\big(M_{m_n}^d> \sqrt{n}r\big)\Big)  \geq -\frac{1}{2} \mathbb{P}_{\delta_{\mathbf{x}}}\big(M_{m_n}^d> \sqrt{n}r\big). 
	\]
	Plugging this back to \eqref{equality-Mn-1} implies that when $n$ is large enough,
	\begin{align}\label{decompose-tail-probability-Mn}
		& \mathbb{P}_{\delta_{\sqrt{n}\mathbf{y}}}\big(M_n^d> \sqrt{n}r\big)
		\leq   1- \mathbb{E}_{\delta_{\sqrt{n}\mathbf{y}}}\Big( \exp\Big\{- \frac{1}{2} \sum_{|u|= k_n}  \mathbb{P}_{\delta_{V(u)}}\big(M_{m_n}^d> \sqrt{n}r\big) \Big\} \Big)  .
	\end{align}
	For each $\delta\in (0, r/2)$, noticing that  for any $\mathbf{x}\in \overline{D}_{\sqrt{n}(r-\delta)}$,
	\begin{align}
		\mathbb{P}_{\delta_{\mathbf{x}}}\big(M_{m_n}^d> \sqrt{n}r\big) & \leq  \mathbb{P}\big(M_{m_n}^d> \sqrt{n}r - \| \mathbf{x}\| \big)  \leq \mathbb{P}\big(M_{m_n}^d>\sqrt{n} \delta \big),
	\end{align}
	there exists at least one coordinate component (say $j$) such that $\max_{|w|= m_n} 
	|V^{(j)}(w)| 
	\ge \sqrt{n}\delta /d$. Therefore, by \cite[Lemma 2.6]{ZX25}  (or see \cite[Lemma 3.5]{HRS2026}  and \cite[Lemma 3.10]{HRS2025} ), there exists a constant $C_*= C_*(r,\delta,d)$ such that for large $n$, 
	\begin{align}\label{upper-bound-p-n-2}
		\mathbb{P}_{\delta_{\mathbf{x}}}\big(M_{m_n}^d> \sqrt{n}r\big) & \leq  
		s C_*n^{-\frac{1}{\alpha-1}}.
	\end{align}
	Combining \eqref{upper-bound-p-n}, \eqref{decompose-tail-probability-Mn} and \eqref{upper-bound-p-n-2}, we conclude that 
	\begin{align}
		\mathbb{P}_{\delta_{\sqrt{n}\mathbf{y}}}\big(M_n^d > \sqrt{n}r\big)		
		\leq & 1- \mathbb{E}_{\delta_{\sqrt{n}\mathbf{y}}}\bigg( \exp\Big\{- \frac{C_*sZ_{k_n}(\overline{D}_{\sqrt{n}(r-\delta)})}{2n^{\frac{1}{\alpha-1}}} -\frac{c_*Z_{k_n}((\overline{D}_{\sqrt{n}(r-\delta)})^c)}{2m_n^{\frac{1}{\alpha-1}}} \Big\} \bigg) \nonumber \\
		\leq &1- \mathbb{E}_{\delta_{\sqrt{n}\mathbf{y}}}\bigg( \exp\Big\{- \frac{C_*sZ_{k_n}(\mathbb{R}^d)}{2n^{\frac{1}{\alpha-1}}} -\frac{c_* Z_{k_n}((\overline{D}_{\sqrt{k_n}(r-\delta)})^c)}{2m_n^{\frac{1}{\alpha-1}}}  \Big\} \bigg) .
	\end{align}
	According to the  inequality $1-e^{-|a|-|b|}\leq 1-e^{-|a|} +|b|$ and noticing that $Z_n(\mathbb{R}^d)$ is a mean $1$ martingale, the above probability is bounded from above by
	\begin{align}\label{e11}
		\mathbb{P}_{\delta_{\sqrt{n}\mathbf{y}}}\big(M_n^d > \sqrt{n}r\big)
		\leq & 1- \mathbb{E}_{\delta_{\sqrt{n}\mathbf{y}}}\bigg( \exp\Big\{ -\frac{c_*Z_{k_n}((\overline{D}_{\sqrt{k_n}(r-\delta)})^c)}{2m_n^{\frac{1}{\alpha-1}}} \Big\} \bigg) + \frac{C_* s}{2n^{\frac{1}{\alpha-1}}} \nonumber \\
		\leq & 1- \mathbb{E}_{\delta_{\sqrt{n}\mathbf{y}}}\bigg( \exp\Big\{ -\frac{c_*}{2m_n^{\frac{1}{\alpha-1}}} \int f_\delta\left(\frac{\mathbf{z}}{\sqrt{k_n}}\right) Z_{k_n}(\mathrm{d}\mathbf{z}) \Big\} \bigg) + \frac{C_* s}{2n^{\frac{1}{\alpha-1}}},
	\end{align}
	where 
	$f_\delta(\mathbf{x}):= \min\{(\| \mathbf{x}\|-r+2\delta)_+,\delta\}/\delta
	\in C_b^+(\mathbb{R}^d)$
	such that $f_\delta(\mathbf{x})=0$ if and only if $\|\mathbf{x}\|\leq r-2\delta$ and that $f_\delta(\mathbf{x})=1$  if $\| \mathbf{x}\| \geq r-\delta$.  Therefore, combining \eqref{appl-invariance-principle} and \eqref{e11}, for each fixed $s\in (0,1)$ and 
	$\delta\in (0, r/2)$,
	\begin{align}\label{e12}
		\limsup_{n\to\infty}  n^{\frac{1}{\alpha-1}} \mathbb{P}_{\delta_{\sqrt{n}\mathbf{y}}}\big(M_n^d > \sqrt{n}r\big) 
		& \le    -\frac{1}{(1-s)^{\frac{1}{\alpha-1}}}\log \mathbb{E}_{\delta_{(1-s)^{-1/2}\mathbf{y}}} \Big(e^{- \frac{c_*(1-s)^{\frac{1}{\alpha-1}}}{2}s^{-\frac{1}{\alpha-1}} \langle f_\delta, X_1\rangle}\Big) + C_* s\nonumber\\
		& \le -\frac{1}{(1-s)^{\frac{1}{\alpha-1}}}\log \mathbb{P}_{\delta_{(1-s)^{-1/2}\mathbf{y}}} \big( \langle f_\delta, X_1\rangle =0\big) + C_* s\nonumber\\
		& = -\frac{1}{(1-s)^{\frac{1}{\alpha-1}}}\log \mathbb{P}_{\delta_{(1-s)^{-1/2}\mathbf{y}}}\big(  X_1((\overline{D}_{r-2\delta})^c)= 0\big) + C_* s,
	\end{align}
	where in the last equality we used the fact that   the support of $f_\delta$ is equal to $D_{r-2\delta}^c$.  
	From \eqref{scaling-property} (with $t=1/\kappa$ and $\kappa =r^2/ (r-2\delta)^2$ ), it holds that 
	\begin{align}
		-\log \mathbb{P}_{\delta_{(1-s)^{-1/2}\mathbf{y}}} \big(  X_1((\overline{D}_{r-2\delta})^c)= 0\big)  = - \kappa^{-\frac{1}{\alpha-1}} \log 	\mathbb{P}_{\delta_{(1-s)^{-1/2}\mathbf{y}/\sqrt{\kappa}}} \big(  X_{1/\kappa}(\overline{D}_r)^c)= 0\big)  .
	\end{align}
	Set $\varphi(\mathbf{x}):= -\log \mathbb{P}_{\delta_{\mathbf{x}}} (X_{1/2} ((\overline{D}_r)^c)=0)$. Let $\delta$ be sufficient small such that $\kappa < 2$, then by the Markov property at $1/2$, we have
	\begin{align}
		& -\log \mathbb{P}_{\delta_{(1-s)^{-1/2}\mathbf{y}}} \big(  X_1((\overline{D}_{r-2\delta})^c)= 0\big)  \\
		&= -\kappa^{-\frac{1}{\alpha-1}}  \log 	\mathbb{E}_{\delta_{(1-s)^{-1/2}\mathbf{y}/\sqrt{\kappa}}} \Big( \mathbb{P}_{\delta_{(1-s)^{-1/2}\mathbf{y}/\sqrt{\kappa}}}\big( X_{1/\kappa}((\overline{D}_r)^c)= 0| X_{1/\kappa-1/2}\big)\Big)  \\
		& =  -\kappa^{-\frac{1}{\alpha-1}}  \log 	\mathbb{E}_{\delta_{(1-s)^{-1/2}\mathbf{y}/\sqrt{\kappa}}} \big( e^{-\langle \varphi, X_{1/\kappa -1/2} \rangle}\big) = \kappa^{-\frac{1}{\alpha-1}}  u_\varphi(1/\kappa-1/2, (1-s)^{-1/2}\mathbf{y}/\sqrt{\kappa}).
	\end{align}
	Since $\varphi$ is a bounded measurable function,  $u_\varphi(t,\mathbf{y})$ is continuous in $(t,\mathbf{y})$. Therefore, taking $s\to 0$ first and then $\delta\to 0$ (or $\kappa \to 1$) in \eqref{e12}, we finally conclude that 
	\begin{align}\label{e12'}
		\limsup_{n\to\infty}  n^{\frac{1}{\alpha-1}} \mathbb{P}_{\delta_{\sqrt{n}\mathbf{y}}}\big(M_n^d > \sqrt{n}r\big) 
		& \le    u_\varphi( 1/2,  \mathbf{y} ) = -\log \mathbb{P}_{\delta_{\mathbf{y}}} \big(  X_1((\overline{D}_{r})^c)= 0\big).
	\end{align}
	By \eqref{N-measures}, the last term is equal to $\mathbb{N}_\mathbf{y}(w_1((\overline{D}_r)^c)>0)= \mathbb{N}_\mathbf{y}(M_1^X> r)$. Plugging this back to \eqref{e12'} implies \eqref{Goal2}.
	Combining \eqref{Goal1} and \eqref{Goal2}, we complete the proof of the theorem. 
\end{proof}

\begin{proof}[Proof of Corollary \ref{Cor}]
	As summarized in \cite[(1.10) and (1.16) below]{HRS2026},  we have 
	\begin{align}
		\lim_{n\to \infty}n^{\frac{1}{\alpha-1}}\mathbb{P}_{\delta_{\sqrt{n}\mathbf{y}}}\big(Z_n(\mathbb{R}^d)>0\big)
		=\mathbb{N}_{\mathbf{y}}\big(w_1(\mathbb{R}^d)>0\big).
	\end{align}
	This combined with Theorem \ref{thm-behavior-Mn} yields that for any 
	$r>0$,
	\begin{align}
		&\lim_{n\to\infty}\mathbb{P}_{\delta_{\sqrt{n}\mathbf{y}}} \big(M_n^d > \sqrt{n}r|Z_n(\mathbb{R}^d)>0\big)
		=\lim_{n\to\infty}\frac{\mathbb{P}_{\delta_{\sqrt{n}\mathbf{y}}} \left(M_n^d > \sqrt{n}r\right)}{\mathbb{P}_{\delta_{\sqrt{n}\mathbf{y}}} \left(Z_n(\mathbb{R}^d)>0\right)}\\
		& =\frac{\mathbb{N}_\mathbf{y}\big( M_1^{X}> r\big)}{\mathbb{N}_\mathbf{y}\left(w_1(\mathbb{R}^d)>0\right)}
		=   \mathbb{N}_\mathbf{y}\big(M_1^{X}> r|w_1(\mathbb{R}^d)\neq 0\big).
	\end{align}
	The desired result follows immediately from the standard criterion for weak convergence of one-dimensional distributions.
\end{proof}

\subsection{Proof of Theorem \ref{thm-total-progeny}}\label{S4.4}

For simplicity, for any $x>0$, $\mathbf{y} \in \overline{D}_1$ and $\gamma \geq 0$, we define 
\begin{align}
	U_{(\gamma,\infty)}(\mathbf{y};x):= \lim_{\theta\to\infty} U_{(\gamma,\theta)}(\mathbf{y};x) = x^{\frac{2}{\alpha-1}} \bigg( 1-\mathbb{E}_{\delta_{x\mathbf{y}}}\Big(e^{- \gamma x^{-\frac{2\alpha}{\alpha-1}} \zeta_x} 1_{\{\langle 1, Z_{\overline{D}_x}\rangle=0\}} \Big)\bigg).
\end{align}
\begin{lemma}\label{lemma-asymptotic-Laplace-transfrom}
	For any $\mathbf{y} \in D_1 $ and $\gamma \ge 0$, we have
	\begin{align}
		\lim_{x\to\infty} U_{(\gamma,\infty)}(\mathbf{y};x) =  V_{(\gamma, \infty)}(\mathbf{y}).
	\end{align}
\end{lemma}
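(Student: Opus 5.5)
The plan is to interchange the limits $\theta\to\infty$ and $x\to\infty$ by sandwiching $U_{(\gamma,\infty)}(\mathbf{y};x)$ between quantities whose limits are already identified by Proposition \ref{Uniqueness}. For the lower bound, $U_{(\gamma,\infty)}(\mathbf{y};x)\ge U_{(\gamma,\theta)}(\mathbf{y};x)$ for every finite $\theta$, since $U_{(\gamma,\theta)}$ increases in $\theta$. Proposition \ref{Uniqueness}(1) then gives
\[
\liminf_{x\to\infty}U_{(\gamma,\infty)}(\mathbf{y};x)\ge V_{(\gamma,\theta)}(\mathbf{y}),
\]
and letting $\theta\to\infty$ yields $\liminf_{x\to\infty} U_{(\gamma,\infty)}(\mathbf{y};x)\ge V_{(\gamma,\infty)}(\mathbf{y})$.

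For the upper bound, I write
\[
1-\mathbb{E}\big(e^{-\gamma'\zeta_x}1_{\{\langle 1,Z_{\overline{D}_x}\rangle=0\}}\big)
\le \big(1-\mathbb{E}e^{-\gamma'\zeta_x}\big)+\mathbb{P}\big(\langle 1,Z_{\overline{D}_x}\rangle>0\big).
\]
This bound is too crude by itself: it gives only $V_{(\gamma,0)}+V_{(0,\infty)}$, which is not $V_{(\gamma,\infty)}$. Instead I would mimic Step 2 of the proof of Theorem \ref{thm-behavior-all-time-maximum}, shrinking the domain to $\overline{D}_{x(1-\varepsilon)}$.

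On the event $\{\langle 1,Z_{\overline{D}_x}\rangle=0\}$, all particles stay in $\overline{D}_x$. Conditioning on the exit configuration $Z_{\overline{D}_{x(1-\varepsilon)}}$, I have $\zeta_x=\zeta_{x(1-\varepsilon)}+\sum_{u}\zeta^{(u)}_x$, where the sum runs over the descendants of the exiting particles $u\in\mathcal{T}_{\overline{D}_{x(1-\varepsilon)}}$. I restrict to the event that every exit point lies in $D_{x(1-\varepsilon/2)}$; its complement has probability $o(x^{-2/(\alpha-1)})$ by the argument of \eqref{asymptotic-behavior-I2}. On this event, for each exiting particle at $\mathbf{a}$, I need a lower bound on
\[
\mathbb{E}_{\delta_\mathbf{a}}\big(e^{-\gamma x^{-2\alpha/(\alpha-1)}\zeta_x}1_{\{\text{no exit from }\overline{D}_x\}}\big)\ge 1-Cx^{-2/(\alpha-1)}.
\]
To get it, I first use $1-\mathbb{P}_{\delta_{\mathbf{a}}}(M^d\ge x)\ge 1-C_*(\varepsilon)x^{-2/(\alpha-1)}$ from \eqref{rough-behavior-M}, as in \eqref{lower-bound-1}. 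Next I use $1-\mathbb{E}_{\delta_{\mathbf{a}}}e^{-\gamma'\zeta_x}\le\gamma'\mathbb{E}_{\delta_{\mathbf{a}}}\zeta_x=\gamma'\mathbb{E}_{\mathbf{a}}\tau_x\le \gamma C_\tau x^{-2/(\alpha-1)}$, which follows from the many-to-one formula and Lemma \ref{lemma-step1}. Exponentiating over the exit particles, exactly as in \eqref{condition-expectation}, gives
\[
U_{(\gamma,\infty)}(\mathbf{y};x)\le (1-\varepsilon)^{-\frac{2}{\alpha-1}}\,U_{(\gamma_\varepsilon,\theta_\varepsilon)}\Big(\tfrac{\mathbf{y}}{1-\varepsilon};x(1-\varepsilon)\Big)+o(1),
\]
where $\gamma_\varepsilon=\gamma(1-\varepsilon)^{2\alpha/(\alpha-1)}$ and $\theta_\varepsilon=(2C_*(\varepsilon)+2\gamma C_\tau)(1-\varepsilon)^{2/(\alpha-1)}$. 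Applying Proposition \ref{Uniqueness}(1) and monotonicity in $\theta$, the limsup is at most $(1-\varepsilon)^{-2/(\alpha-1)}V_{(\gamma,\infty)}(\mathbf{y}/(1-\varepsilon))$, using also $\gamma_\varepsilon\le\gamma$.

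The main obstacle is the final step $\varepsilon\downarrow0$, which requires continuity of $\mathbf{z}\mapsto V_{(\gamma,\infty)}(\mathbf{z})$ in $D_1$. For $\gamma=0$ this is Proposition \ref{Uniqueness}(2). For $\gamma>0$, I would argue that $V_{(\gamma,\infty)}$ is the limit of the increasing sequence $V_{(\gamma,\theta)}$. Each $V_{(\gamma,\theta)}$ solves the Dynkin equation, and on compact subsets of $D_1$ the sequence is locally bounded by $V_{(0,\infty)}+\gamma\mathbb{E}_\cdot\tau_1^W$. Interior elliptic estimates for $-\mathcal{L}v+\mathcal{C}(\alpha)v^\alpha=\gamma$ then give local equicontinuity, so the limit is continuous; alternatively it is the large solution of that equation, as in V\'eron. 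If this is troublesome, a fallback is to note that $\mathbf{z}\mapsto V_{(\gamma,\infty)}(\mathbf{z})$ is monotone under radial dilation by scaling, so right-continuity along the ray $\mathbf{y}/(1-\varepsilon)$ suffices; this follows from the representation \eqref{expression-V-eta-theta} and the continuity in probability of the exit functional.
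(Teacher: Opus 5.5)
Your proof is correct in outline. It takes a longer route than the paper, and the step you flag as the main obstacle is exactly what the paper's argument avoids. The paper does not repeat the domain-shrinking argument at all. Write $\gamma'=\gamma x^{-\frac{2\alpha}{\alpha-1}}$, $\theta'=\theta x^{-\frac{2}{\alpha-1}}$ and $Z=Z_{\overline{D}_x}$. Using only $e^{-\gamma'\zeta_x}\le 1$, the paper gets
\[
0\le U_{(\gamma,\infty)}(\mathbf{y};x)-U_{(\gamma,\theta)}(\mathbf{y};x)
=x^{\frac{2}{\alpha-1}}\mathbb{E}_{\delta_{x\mathbf{y}}}\Big(e^{-\gamma'\zeta_x}\big(e^{-\theta'\langle 1,Z\rangle}-1_{\{\langle 1,Z\rangle=0\}}\big)\Big)
\le x^{\frac{2}{\alpha-1}}\mathbb{P}_{\delta_{x\mathbf{y}}}(M^d>x)-U_{(0,\theta)}(\mathbf{y};x).
\]
By Theorem \ref{thm-behavior-all-time-maximum} and Proposition \ref{Uniqueness}, the right-hand side converges to $V_{(0,\infty)}(\mathbf{y})-V_{(0,\theta)}(\mathbf{y})$, which vanishes as $\theta\to\infty$. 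So the case $\gamma=0$ controls the interchange of limits for every $\gamma$, and no regularity of $V_{(\gamma,\infty)}$ with $\gamma>0$ is ever needed. This is the refinement of your ``too crude'' bound that you missed: split off $1-\mathbb{E}e^{-\gamma'\zeta_x-\theta'\langle 1,Z\rangle}$ rather than $1-\mathbb{E}e^{-\gamma'\zeta_x}$. Your lower bound, by monotonicity in $\theta$, is fine and slightly more direct than the paper's.

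Your upper bound repeats Step 2 of the proof of Theorem \ref{thm-behavior-all-time-maximum}, adding the per-particle term $1-\mathbb{E}_{\delta_{\mathbf a}}e^{-\gamma'\zeta_x}\le\gamma C_\tau x^{-\frac{2}{\alpha-1}}$. It checks out: the decomposition of $\zeta_x$ across the stopping line, the many-to-one identity $\mathbb{E}_{\delta_{\mathbf a}}\zeta_x=\mathbb{E}_{\mathbf a}\tau_x$, and the values of $\gamma_\varepsilon,\theta_\varepsilon$ are all correct. The cost is the limit $\varepsilon\downarrow0$. Your elliptic-estimate sketch can be made rigorous, but it needs regularity theory that is not in the paper.

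Your fallback is not accurate as stated. Scaling does not make $V_{(\gamma,\infty)}$ itself monotone along rays: $\Sigma$ need not be isotropic, and scaling also changes $\gamma$. What scaling does give is the following. Rescaling \eqref{eq:Dynkin-integral-equation} shows that $r^{-\frac{2}{\alpha-1}}V_{(\gamma,\infty)}(\mathbf{y}/r)$ is the same functional for the ball $\overline{D}_r$ with parameter $\gamma r^{-\frac{2\alpha}{\alpha-1}}$, namely
\[
-\log\mathbb{E}_{\delta_{\mathbf y}}\Big(e^{-\gamma r^{-\frac{2\alpha}{\alpha-1}}\int_0^\infty\langle 1,X_t\rangle\mathrm{d}t}\,1_{\{M^{X,d}<r\}}\Big).
\]
By monotone convergence this decreases to $V_{(\gamma,\infty)}(\mathbf{y})$ as $r\uparrow1$. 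This uses the same identification $\{\langle 1,X^{\overline{D}_r}\rangle=0\}=\{M^{X,d}<r\}$ that the paper uses for $r=1$. With $r=1-\varepsilon$, this is exactly your quantity $(1-\varepsilon)^{-\frac{2}{\alpha-1}}V_{(\gamma,\infty)}(\mathbf{y}/(1-\varepsilon))$. So your argument closes, but the paper's comparison inequality is much shorter.
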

\begin{proof}
	Define
	\begin{align}\label{Goal5}
		A_{\mathbf{y}}(x,\gamma,\theta):=  \left|U_{(\gamma,\infty)}(\mathbf{y};x) - U_{(\gamma,\theta)}(\mathbf{y};x)\right|.
	\end{align}
	According to the definition of $U_{(\gamma, \infty)}$ and $U_{(\gamma, \theta)}$, we see that 
	\begin{align}
		&A_{\mathbf{y}}(x,\gamma,\theta)
		=x^{\frac{2}{\alpha-1}} \Big|\mathbb{E}_{\delta_{x\mathbf{y}}}\Big(e^{- \gamma x^{-\frac{2\alpha}{\alpha-1}} \zeta_x} 1_{\{\langle 1, Z_{\overline{D}_x}\rangle=0\}} \Big)
		-\mathbb{E}_{\delta_{x\mathbf{y}}}\Big(e^{- \gamma x^{-\frac{2\alpha}{\alpha-1}} \zeta_x- \theta x^{-\frac{2}{\alpha-1}}\langle 1, Z_{\overline{D}_x}\rangle} \Big) \Big| \\
		&=x^{\frac{2}{\alpha-1}} \mathbb{E}_{\delta_{x\mathbf{y}}}
		\bigg(e^{- \gamma x^{-\frac{2\alpha}{\alpha-1}} \zeta_x} \Big(e^{-\theta x^{-\frac{2}{\alpha-1}} \langle 1, Z_{\overline{D}_x}\rangle}-1_{\{\langle 1, Z_{\overline{D}_x}\rangle=0\}} \Big)\bigg)\\
		&\leq x^{\frac{2}{\alpha-1}} \mathbb{E}_{\delta_{x\mathbf{y}}}
		\Big(e^{-\theta x^{-\frac{2}{\alpha-1}} \langle 1, Z_{\overline{D}_x}\rangle}-1_{\{\langle 1, Z_{\overline{D}_x}\rangle=0\}} \Big) 
		= x^{\frac{2}{\alpha-1}}\mathbb{P}_{\delta_{x\mathbf{y}}}(M^d>x) - U_{(0,\theta)}(\mathbf{y};x).
	\end{align}
	Combining Theorem \ref{thm-behavior-all-time-maximum} and Proposition \ref{Uniqueness}, we obtain 
	\begin{align}
		& \limsup_{x\to\infty} A_{\mathbf{y}}(x,\gamma,\theta)
		\le \mathbb{N}_\mathbf{y}\big(M^{X,d}\geq 1\big)- V_{(0,\theta)}(\mathbf{y})= V_{(0,\infty)}(\mathbf{y})-V_{(0,\theta)}(\mathbf{y}) =:B_\mathbf{y}(\theta).
	\end{align}
	Therefore, for each fixed $\theta>0$, it follows from Proposition \ref{Uniqueness}  that 
	\begin{align}\label{two-side-bound-U-gamma-infty}
		V_{(\gamma, \theta)}(\mathbf{y}) - B_\mathbf{y}(\theta) \leq \liminf_{x\to\infty} U_{(\gamma,\infty)}(\mathbf{y};x)  \leq 
		\limsup_{x\to \infty} 
		U_{(\gamma,\infty)}(\mathbf{y};x) \leq V_{(\gamma, \theta)}(\mathbf{y}) + B_\mathbf{y}(\theta) .
	\end{align}
	Taking $\theta\to\infty$ in \eqref{two-side-bound-U-gamma-infty} and together with $\lim_{\theta\to\infty} B_\mathbf{y}(\theta)=0$ implies the desired result. 
\end{proof}

\begin{lemma}\label{lemma-aux-result}
	For any $\gamma\geq 0$ and $\mathbf{y} \in D_1$,
	\begin{align}\label{Goal8}
		V_{(\gamma, \infty)}(\mathbf{y})-\left(\gamma/{\mathcal{C}(\alpha)}\right)^{\frac{1}{\alpha}} = \mathbb{N}_\mathbf{y}\Big(e^{-\gamma \zeta^X}1_{\{M^{X,d}\geq 1\}}\Big).
	\end{align}
\end{lemma}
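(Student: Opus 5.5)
The plan is to express both sides through the excursion measure $\mathbb{N}_\mathbf{y}$ and to split off the part that does not see the ball $\overline{D}_1$.

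\emph{Step 1: a Laplace-functional representation of $V_{(\gamma,\infty)}$.} By \eqref{expression-V-eta-theta} and monotone convergence as $\theta\uparrow\infty$,
\begin{align}
V_{(\gamma,\infty)}(\mathbf{y})=-\log \mathbb{E}_{\delta_{\mathbf{y}}}\Big[e^{-\gamma\int_0^\infty\langle 1,X^{D_1}_t\rangle \mathrm{d}t}\,1_{\{\langle 1,X^{\overline{D}_1}\rangle=0\}}\Big].
\end{align}
As in the proof of Proposition \ref{Uniqueness}(3), we have $\{\langle 1,X^{\overline{D}_1}\rangle=0\}=\{M^{X,d}<1\}$ a.s. On this event the whole population stays in the open ball $D_1$ for all times. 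Hence $X_t^{D_1}=X_t$ for all $t$, and so $\int_0^\infty\langle 1,X^{D_1}_t\rangle \mathrm{d}t=\zeta^X$, with $\zeta^X$ defined for $X$ as for $w$. This gives
\begin{align}
V_{(\gamma,\infty)}(\mathbf{y})=-\log \mathbb{E}_{\delta_{\mathbf{y}}}\big[e^{-\gamma\zeta^X}1_{\{M^{X,d}<1\}}\big].
\end{align}

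\emph{Step 2: passage to $\mathbb{N}_\mathbf{y}$.} Next I will show that
\begin{align}
-\log \mathbb{E}_{\delta_{\mathbf{y}}}\big[e^{-\gamma\zeta^X}1_{\{M^{X,d}<1\}}\big]=\mathbb{N}_\mathbf{y}\big(1-e^{-\gamma\zeta^X}1_{\{M^{X,d}<1\}}\big).
\end{align}
I will mimic \eqref{Appl-N-measure}. Take $f_\varepsilon\in C_b^+(\mathbb{R}^d)$ with $f_\varepsilon>0$ exactly on $\{\|\mathbf{a}\|>1-\varepsilon\}$. Apply Li \cite[Theorems 5.15 and 8.27]{LZbook} to the additive occupation functional $\int_0^\infty\langle \gamma+\kappa f_\varepsilon,X_t\rangle \mathrm{d}t$. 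Then let $\kappa\uparrow\infty$, using right continuity exactly as in the derivation of \eqref{e13}, and finally let $\varepsilon\downarrow0$, using monotonicity and that $\mathbb{N}_\mathbf{y}(M^{X,d}>1-\varepsilon)<\infty$ so that dominated convergence applies on the $\mathbb{N}$ side. This interchange of the three limits with the $\sigma$-finite measure $\mathbb{N}_\mathbf{y}$ is the main technical point, and I expect it to be the main obstacle. All other steps are algebraic.

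\emph{Step 3: splitting.} Since $1-e^{-\gamma\zeta^X}1_{\{M^{X,d}<1\}}=(1-e^{-\gamma\zeta^X})+e^{-\gamma\zeta^X}1_{\{M^{X,d}\ge1\}}$, and both terms are nonnegative and $\mathbb{N}_\mathbf{y}$-integrable, I get
\begin{align}
V_{(\gamma,\infty)}(\mathbf{y})=\mathbb{N}_\mathbf{y}(1-e^{-\gamma\zeta^X})+\mathbb{N}_\mathbf{y}\big(e^{-\gamma\zeta^X}1_{\{M^{X,d}\ge1\}}\big).
\end{align}
It therefore remains to show that $\mathbb{N}_\mathbf{y}(1-e^{-\gamma\zeta^X})=(\gamma/\mathcal{C}(\alpha))^{1/\alpha}$.

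By the same $\mathbb{N}$-measure identity, $\mathbb{N}_\mathbf{y}(1-e^{-\gamma\int_0^t\langle1,w_s\rangle \mathrm{d}s})=v(t)$, where $v(t):=-\log\mathbb{E}_{\delta_\mathbf{y}}e^{-\gamma\int_0^t\langle 1,X_s\rangle \mathrm{d}s}$. Since the functional involves only total mass, $v$ does not depend on $\mathbf{y}$ and solves the ODE $v'=\gamma-\mathcal{C}(\alpha)v^\alpha$ with $v(0)=0$. This follows from the integral equation \eqref{pde} with an added source term $\gamma$, i.e.\ the $f$-term of \eqref{eq:Dynkin-integral-equation} with $D=\mathbb{R}^d$ and $\tau_D=t$. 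The solution increases to the unique zero of the right-hand side, namely $(\gamma/\mathcal{C}(\alpha))^{1/\alpha}$. Extinction in finite time gives $\int_0^t\langle 1,w_s\rangle \mathrm{d}s\uparrow\zeta^X$, so monotone convergence as $t\to\infty$ yields the claim and hence \eqref{Goal8}.
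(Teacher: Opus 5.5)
Your proposal is correct and follows the paper's proof. You identify $V_{(\gamma,\infty)}(\mathbf{y})$ with $-\log\mathbb{E}_{\delta_\mathbf{y}}[e^{-\gamma\zeta^X}1_{\{M^{X,d}<1\}}]$ via \eqref{expression-V-eta-theta}, transfer this to $\mathbb{N}_\mathbf{y}$ by the limiting argument behind \eqref{e13}, and subtract $\mathbb{N}_\mathbf{y}(1-e^{-\gamma\zeta^X})=(\gamma/\mathcal{C}(\alpha))^{1/\alpha}$. You make the $\mathbb{N}$-side interchange of limits more explicit than the paper does, including the finiteness of $\mathbb{N}_\mathbf{y}(M^{X,d}>1-\varepsilon)$ for $\varepsilon<1-\|\mathbf{y}\|$. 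The only real variation is the constant: you obtain it from the finite-horizon ODE $v'=\gamma-\mathcal{C}(\alpha)v^\alpha$, $v(0)=0$, followed by monotone convergence in $t$. The paper instead applies the Markov property at time $T$ and \cite[Corollary 5.17]{LZbook} to get the fixed-point relation $T\mathcal{C}(\alpha)w_\gamma^\alpha=T\gamma$.
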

\begin{proof}
	The case $\gamma=0$ follows directly by Proposition \ref{Uniqueness}, so we only consider the case $\gamma>0$. We first show that for any $\gamma>0$ and $\mathbf{y}\in \mathbb{R}^d$, 
	\begin{align}\label{Goal7}
		-\log \mathbb{E}_{\delta_\mathbf{y}}\Big(\exp\Big\{-\gamma \int_0^\infty \langle 1, X_t\rangle \mathrm{d}t\Big\}\Big) = \Big(\frac{\gamma}{\mathcal{C}(\alpha)}\Big)^{\frac{1}{\alpha}}.
	\end{align}
	Noticing that the left-hand side of \eqref{Goal7} is independent of $\mathbf{y}$.  
	According to the Markov property, for each fixed $T>0$, we have
	\begin{align}
		w_\gamma&:=-\log \mathbb{E}_{\delta_\mathbf{y}}\Big(\exp\Big\{-\gamma \int_0^\infty \langle 1, X_t\rangle \mathrm{d}t\Big\}\Big)= -\log \mathbb{E}\Big(\exp\Big\{-\gamma \int_0^T \langle 1, X_t\rangle \mathrm{d}t - w_\gamma \langle 1, X_T\rangle\Big\}\Big) . 
	\end{align}
	From \cite[Corollary 5.17]{LZbook}  (with $f=w_\gamma,g=\gamma$ and $t=T$), $w_\gamma$ ($= v_T(\mathbf{x})$ independnt of $T$ and $\mathbf{x}$) is the unique locally bounded positive solution to
	\[
	w_\gamma + T \mathcal{C}(\alpha) w_\gamma^\alpha =w_\gamma +T\gamma,
	\]
	which is equivalent to 
	$w_\gamma = (\gamma/\mathcal{C}(\alpha))^{\alpha^{-1}}$
	and this completes the proof of \eqref{Goal7}. Now combining \eqref{Appl-N-measure} and \eqref{Goal7} implies 
	\begin{align}\label{Goal7'}
		\mathbb{N}_{\mathbf{y}}\Big(1-\exp\left\{-\gamma \zeta^X\right\}\Big) = \left(\gamma/\mathcal{C}(\alpha)\right)^{\frac{1}{\alpha}}.
	\end{align}
	Since $X_t^{D_1}= X_t$ for all $t>0$ on 
	$\{M^{X,d}<1\} = \{\langle 1, X^{\overline{D}_1} \rangle=0\}$, 
	it follows from 
	\eqref{expression-V-eta-theta}
	and  \eqref{Goal7'}  that 
	\begin{align}
		&V_{(\gamma, \infty)}(\mathbf{y})-\left(\gamma/{\mathcal{C}(\alpha)}\right)^{\frac{1}{\alpha}}  \\
		=&  	-\log \mathbb E_{\delta_{\mathbf y}}
		\Big[\exp\Big\{-\gamma \int_0^\infty \langle 1, X_t^{D_1}\rangle \mathrm{d}t \Big\}1_{\{ M^{X,d}<1\}}\Big]  -	\mathbb{N}_{\mathbf{y}}\Big(1-\exp\big\{-\gamma \zeta^X\big\}\Big) \\
		=& 	-\log \mathbb E_{\delta_{\mathbf y}}
		\Big[\exp\Big\{-\gamma \int_0^\infty \langle 1, X_t\rangle \mathrm{d}t \Big\}1_{\{ M^{X,d}<1\}}\Big]  -	\mathbb{N}_{\mathbf{y}}\Big(1-\exp\big\{-\gamma \zeta^X\big\}\Big).
	\end{align}
	Combining \eqref{Appl-N-measure}, by an argument similar to that leading to \eqref{e13}, with the above equation, we obtain
	\begin{align}
		V_{(\gamma, \infty)}(\mathbf{y})-\left(\gamma/{\mathcal{C}(\alpha)}\right)^{\frac{1}{\alpha}}  &=	 \mathbb N_{\mathbf y}
		\Big[1- \exp\left\{-\gamma  \zeta^X \right\}1_{\{ M^{X,d}<1\}}\Big]  -	\mathbb{N}_{\mathbf{y}}\Big(1-\exp\big\{-\gamma \zeta^X\big\}\Big) \\
		& = \mathbb{N}_\mathbf{y}\Big(e^{-\gamma \zeta^X}1_{\{M^{X,d}\geq 1\}}\Big),
	\end{align}
	which implies the desired result.
\end{proof}

\begin{proof}[Proof of Theorem \ref{thm-total-progeny}]
	First we show that 
	for any $x>0$ and $\mathbf{y} \in D_1$, 
	\begin{align}\label{Goal6}
		\lim_{\gamma\downarrow 0+}  \gamma^{-\frac{1}{\alpha}}
		\mathbb{E}_{\delta_{x \mathbf{y}}}
		\big(1-e^{-\gamma \zeta} \big)= \mathcal{C}(\alpha)^{-\frac{1}{\alpha}}.
	\end{align}
	By the branching property
	and noticing that 
	$\mathbb{E}_{\delta_{x\mathbf{y}}}\left(e^{-\gamma \zeta} \right)$ is independent of $x$ and $\mathbf{y}$, 
	we obtain 
	$$\mathbb{E}_{\delta_{x\mathbf{y}}}\big(e^{-\gamma \zeta} \big)=\mathbb{E}\big(e^{-\gamma \zeta} \big)= e^{-\gamma} \sum_{k=0}^\infty p_k\mathbb{E}\big(e^{-\gamma \zeta} \big)^k.$$
	Thus, $\widehat{u}_\gamma:= 1- \mathbb{E}\left(e^{-\gamma \zeta} \right)$ solves equation 
	\begin{align}
		\widehat{u}_\gamma & = 1-e^{-\gamma} -e^{-\gamma} \phi(\widehat{u}_\gamma) + e^{-\gamma} \widehat{u}_\gamma\quad \Longleftrightarrow\quad \widehat{u}_\gamma = 1- \frac{e^{-\gamma}}{1-e^{-\gamma}} \phi(\widehat{u}_\gamma).
	\end{align}
	Since $\lim_{\gamma\downarrow 0} \widehat{u}_\gamma=0$ due to the criticality of the branching process,
	by Lemma \ref{lemma-property-phi},
	\[
	\lim_{\gamma\downarrow 0} \frac{e^{-\gamma}}{1-e^{-\gamma}} \phi(\widehat{u}_\gamma)=1\quad \Longleftrightarrow \quad 	\lim_{\gamma\downarrow 0} \frac{\mathcal{C}(\alpha)(\widehat{u}_\gamma)^\alpha}{\gamma} =1,
	\]
	which implies \eqref{Goal6}.
	Noticing that 
	\begin{align} 
		&  \mathbb{E}_{\delta_{x\mathbf{y}}}\Big(e^{-\gamma x^{-\frac{2\alpha}{\alpha-1}}\zeta} \big| M^d \geq x\Big)
		=   \frac{\mathbb{E}_{\delta_{x\mathbf{y}}}\Big(e^{-\gamma x^{-\frac{2\alpha}{\alpha-1}}\zeta}1_{\{M^d >x\}} \Big)+\mathbb{E}_{\delta_{x\mathbf{y}}}\Big(e^{-\gamma x^{-\frac{2\alpha}{\alpha-1}}\zeta}1_{\{M^d =x\}} \Big) }{\mathbb{P}_{\delta_{x\mathbf{y}}}\left(M^d \geq x\right)}\\
		&= \frac{\mathbb{E}_{\delta_{x\mathbf{y}}}\Big(e^{-\gamma x^{-\frac{2\alpha}{\alpha-1}}\zeta} \Big)-\mathbb{E}_{\delta_{x\mathbf{y}}}\Big(e^{-\gamma x^{-\frac{2\alpha}{\alpha-1}}\zeta_x} 1_{\{\langle 1, Z_{\overline{D}_x}\rangle=0\}} \Big) +\mathbb{E}_{\delta_{x\mathbf{y}}}\Big(e^{-\gamma x^{-\frac{2\alpha}{\alpha-1}}\zeta}1_{\{M^d =x\}} \Big)}{\mathbb{P}_{\delta_{x\mathbf{y}}}\left(M^d \geq x\right)}.
	\end{align}
	From Theorem \ref{thm-behavior-all-time-maximum}, we see that 
	\[
	\lim_{x\to\infty} x^{\frac{2}{\alpha-1}}\mathbb{E}_{\delta_{x\mathbf{y}}}\Big(e^{-\gamma x^{-\frac{2\alpha}{\alpha-1}}\zeta}1_{\{M^d =x\}} \Big)=0,\quad \lim_{x\to\infty}x^{\frac{2}{\alpha-1}}\mathbb{P}_{\delta_{x\mathbf{y}}}\big(M^d\geq x\big) = \mathbb{N}_\mathbf{y}\big(M^{X,d}\geq 1\big).
	\]
	Therefore, combining  Proposition \ref{Uniqueness}, Lemma \ref{lemma-asymptotic-Laplace-transfrom}, \eqref{Goal6} and the above 
	two displays,
	\begin{align}\label{equivalent-eq-conditional-expectation}
		&\qquad \lim_{x\to\infty} \mathbb{E}_{\delta_{x\mathbf{y}}}\Big(e^{-\gamma x^{-\frac{2\alpha}{\alpha-1}}\zeta} \big| M^d \geq x\Big)= \frac{V_{(\gamma, \infty)}(\mathbf{y})-\left(\gamma/{\mathcal{C}(\alpha)}\right)^{\frac{1}{\alpha}}}{ \mathbb{N}_\mathbf{y}\left(M^{X,d}\geq 1\right)}.
	\end{align}
	Now we complete the proof of the theorem by Lemma \ref{lemma-aux-result}. 
\end{proof}

\bigskip
	\vspace{.1in}

\end{document}